\documentclass{article}
\usepackage{amssymb,amsmath,amsthm}
\usepackage{mathrsfs}
\usepackage{color}
\usepackage{authblk}
\usepackage{enumerate}
\usepackage[colorlinks=true,citecolor=blue,linkcolor=blue]{hyperref}

\newcommand{\Rl}{\mathbb{R}}
\newcommand{\Cplx}{\mathbb{C}}

\newcommand{\Ntrl}{\mathbb{N}}

\newcommand{\Ac}{\mathcal{A}}
\newcommand{\Bc}{\mathcal{B}}
\newcommand{\Kc}{\mathcal{K}}
\newcommand{\Lc}{\mathcal{L}}

\newcommand{\Dc}{\mathcal{D}}

\newcommand{\Zc}{\mathcal{Z}}

\newcommand{\tr}{\mathrm{tr}}

\newcommand{\Tr}{\mathrm{Tr}}

\newcommand{\sgn}{\operatorname{sgn}}
\newcommand{\diag}{\mathrm{diag}}

\newcommand{\dom}{\mathrm{dom}}
\newcommand{\loc}{\mathrm{loc}}

\newcommand{\rank}{\mathrm{rank}}

\newcommand{\Ch}{\mathrm{Ch}}

\newcommand{\bx}{\mathbf{x}}

\newcommand{\bbF}{\mathbf{F}}

\newcommand{\Dd}{\mathscr{D}}

\def\XXint#1#2#3{{\setbox0=\hbox{$#1{#2#3}{\int}$ }
\vcenter{\hbox{$#2#3$ }}\kern-.6\wd0}}

\numberwithin{equation}{section}

\newcommand{\pre}{pre}
\newcommand{\double}{clone~}
\newcommand{\doubling}{cloning~} 
\newcommand{\doublee}{clone} 
\newcommand{\doublinge}{cloning}

\newtheorem{theorem}{Theorem}[section]
\newtheorem{proposition}[theorem]{Proposition}
\newtheorem{corollary}[theorem]{Corollary}
\newtheorem{definition}[theorem]{Definition}
\newtheorem{lemma}[theorem]{Lemma}
\newtheorem{remark}[theorem]{Remark}

\newtheorem{hypothesis}[theorem]{Hypothesis}

\newcommand{\highlight}{}

\date{\today}

\begin{document}

    \title{Noncommutative Geometry for Symmetric Non-Self-Adjoint Operators}
    \author[1]{Alain Connes}
    \author[2]{Galina Levitina}
    \author[2]{Edward McDonald}
    \author[2]{Fedor Sukochev}
    \author[2]{Dmitriy Zanin}

    \affil[1]{{Coll\`ege de France, IHES, 3, rue d'Ulm, 75231 Paris cedex 05, France}}
    \affil[2]{{School of Mathematics and Statistics, University of New South Wales, Sydney NSW 2052, Australia }}
    
\maketitle{}
    \begin{abstract}
        We introduce the notion of a pre-spectral triple, which is a generalisation of a spectral triple $(\Ac, H, D)$ where $D$ is no longer required to be self-adjoint, but closed and symmetric. Despite having weaker assumptions, pre-spectral triples allow us to introduce noncompact noncommutative geometry with boundary. In particular, we derive the Hochschild character theorem in this setting. We give a detailed study of Dirac operators with Dirichlet boundary conditions on domains in $\Rl^d$, $d \geq 2$.
    \end{abstract}


\section{Introduction}

    The noncommutative geometric, or spectral, perspective on geometry is that a geometric space can be modelled by a spectral triple $(\Ac,H,D)$. A standard example
    of a spectral triple arises from a compact Riemannian spin manifold $(X,g)$ with a Dirac-type operator $D$ on sections of the spinor bundle. In this example we take $\Ac$ 
    to be the algebra of smooth functions on $X$ and $H$ to be the Hilbert space of square-integrable sections of the spinor bundle (see \cite[Chapter 11]{GVF} for further details of this class of examples). 
    A requirement for a spectral triple is that
    $D$ be a self-adjoint operator on $H$, and this is indeed the case for the Dirac-type operator just described.
    
    There has been recent work on developing a theory of noncommutative geometry for operators which are symmetric, but not necessarily self-adjoint. Such operators
    naturally arise when considering Dirac operators on domains $\Omega \subset \Rl^d$ with boundary conditions. 
    One approach to incorporating symmetric operators in noncommutative geometry was suggested by Blackadar \cite[Page 164]{Blackadar-KT}, although
    flaws with this approach were noted by later authors \cite{Forsyth-Mesland-Rennie-2014,Hilsum-2010}. In 2005, Bettaieb, Matthey and Valette \cite{Bettaieb-Matthey-Valette-2005}
    studied a similar problem in the abstract setting, however a number of technical difficulties in their work were explained by Forsyth, Mesland and Rennie in 2014 \cite{Forsyth-Mesland-Rennie-2014}. 
    A 2012 memoir from Lesch, Moscovici and Pflaum \cite{LMP-memoir} directly addressed the question of defining the Connes-Chern character for manifolds with boundary, using the framework of relative $K$-homology.
    However their work differs in methods and aims from the present text as we instead focus on non-self-adjoint operators.
    
    More recently Forsyth, Goffeng, Mesland and Rennie studied non-self-adjoint operators from the noncommutative geometric perspective \cite{Forsyth-Goffeng-Mesland-Rennie-arxiv}. A related topic is the unbounded perspective on $KK$-theory,
    and symmetric non-self-adjoint operators feature in this theory, see e.g. \cite{Mesland-Rennie-jfa-2016}. 
    A closely related subject matter is the description of manifolds with boundary in noncommutative geometry: in this direction we mention in particular the work of Schrohe \cite{Schrohe-1999}
    and more recently Iochum and Levy \cite{Iochum-Levy-boundary-2011} studied spectral triples associated to non-self-adjoint operators with a view to describing Dirac operators with boundary conditions.
    In fact as early as 1989, Baum, Douglas and Taylor \cite{Baum-Douglas-Taylor-1989} studied manifolds with boundary from the $K$-homological perspective. 
    Recently, van den Dungen \cite{van-den-Dungen-arxiv} studied perturbations of self-adjoint operators by certain symmetric operators. 
    
    A substantial impetus and the starting point for the present text is the work of Hilsum \cite{Hilsum-2010}. Hilsum studied in detail
    how a symmetric non-self-adjoint operator can define a cycle in $KK$-theory, and his work inspired our own definition of a \pre-spectral triple (Definition \ref{\pre-spectral triple definition}). 
    {\highlight Another related paper which furthers Hilsum's work, in particular in the $KK$-theoretic setting, is due to Kaad and van Suijlekom \cite{Kaad-Suijlekom}.}

\subsection{Motivation}
    We may illustrate the subtleties involved with non-self-adjoint operators with the following simple and well-known example: consider the open unit interval $(0,a)$, where $0 \leq a \leq \infty$.
    The Dirac operator with Dirichlet boundary conditions on the Hilbert space $L_2(0,a)$ is (formally) defined as the differentiation operator $Du = -iu'$, for $u$ a smooth
    compactly supported function on $(0,a)$. The linear operator $D$ may be given the domain $\dom(D) = W^{1,2}_0(0,a)$: the space of Sobolev $W^{1,2}$ functions vanishing on the endpoints (see Subsection \ref{sobolev spaces section} for a complete definition).
%
    With this domain, $D$ is closed and symmetric, but \emph{not} self-adjoint. Indeed, by definition $\dom(D^*)$ is the set of all $u \in L_2(0,a)$ such that there
    exists $w \in L_2(0,a)$ such that for all $v \in W^{1,2}_0(0,a)$ we have that $\langle Dv,u\rangle_{L_2(0,a)} = \langle v,w\rangle_{L_2(0,a)}$. An argument involving integration by parts
    shows that:
    \begin{equation*}
        \dom(D^*) = W^{1,2}(0,a),
    \end{equation*}
    the space of Sobolev $W^{1,2}$ functions with no restrictions on the values at the endpoints. Moreover, $D^*u = -iu'$ for $u\in W^{1,2}(0,a)$. So here $D$ is symmetric but not self-adjoint,
    as $D^*$ properly extends $D$. 
    
    
    A consequence of $D$ being not self-adjoint is that the (self-adjoint) operators $D^*D$ and $DD^*$ are completely distinct self-adjoint extensions of $D^2$. Indeed,
    \begin{align*}
        \dom(D^*D) &= \{u \in W^{1,2}_0(0,a)\;:\;u' \in W^{1,2}(0,a)\}\\
        \dom(DD^*) &= \{u \in W^{1,2}(0,a)\;:\;u' \in W^{1,2}_0(0,a)\}
    \end{align*}
    while $D^*Du = -u''$ and $DD^*u = -u''$ for $u$ in the respective domains of $D^*D$ and $DD^*$. Hence $D^*D$ and $DD^*$ are Laplace
    operators with different boundary conditions: $\dom(D^*D)$ consists of functions which vanish at the endpoints (this is the Dirichlet Laplace operator)
    and $\dom(DD^*)$ consists of functions whose derivative vanishes at the endpoints (this is the Neumann Laplace operator). 
    These operators are of quite different character: while $D^*D$ always has trivial kernel, $DD^*$ has $1$-dimensional kernel consisting of constant functions if $a < \infty$. In higher dimensions and for domains
    more complicated than an interval, the situation can be far more subtle due to the diversity of possible boundary conditions (see, for example, \cite{Schmidt-1995}). For details on higher dimensional examples, see Section \ref{model examples section}.
    
    Moreover, when $a<\infty$, $D$ has self-adjoint extensions. However for $(0,\infty)$ there are no self-adjoint extensions of $D$. For further discussion of this example, see \cite[Section 1.3.1]{Schmudgen-2012} and \cite[Chapter IV, Section 49]{Akhiezer-Glazman-1993}.
        
    Our interest in the setting of non-self-adjoint operators is to generalise the Connes Character formula, also known as the Hochschild character formula or theorem. Connes' Character formula (originating in \cite[Section 2.$\gamma$]{NCG-book})
    provides a means of computing the Hochschild class of the Chern character on $K$-homology in ``local" terms. To be precise, for a {\highlight even smooth $p$-dimensional spectral triple $(\Ac,H,D)$ with grading $\Gamma$} and a Fredholm
    module $(H,F)$ representing the class of $(\Ac,H,D)$ in $K$-homology\footnote{In the case that $D$ has trivial kernel, we may take $F = \sgn(D)$}, we have an equality of the following two Hochschild cocycles:
    \begin{align*}
        a_0\otimes a_1\otimes\cdots\otimes a_p &\mapsto \frac{1}{2}\tr(\Gamma F \prod_{k=0}^p [F,a_k])\\
        a_0\otimes a_1\otimes\cdots\otimes a_p &\mapsto \tr_\omega(\Gamma a_0\prod_{k=1}^p [D,a_k](1+D^2)^{-p/2}).
    \end{align*}
    for $a_0,a_1,\ldots,a_p \in \Ac$, when evaluated on a Hochschild cycle in $\Ac^{\otimes (p+1)}$, and where $\tr_\omega$ denotes a Dixmier trace (or more generally
    any normalised trace on $\Lc_{1,\infty}$). {\highlight If the spectral triple is odd, then the same identity holds with $\Gamma=1$}. Expositions and proofs of the Character formula in various settings may be found in \cite[Section 2.$\gamma$]{NCG-book}, \cite{Connes-original-spectral-1995}, \cite[Section 10.4]{GVF}, \cite{CPRS}, \cite{CRSZ}, \cite[Appendix C]{higson} and \cite[Section 4]{BF}.
    A recent proof which forms the basis of the present text and with emphasis on the case where $\Ac$ is nonunital is contained in \cite{SZ-asterisque}.
    
    If we attempt to generalise Connes' character formula to the non-self-adjoint setting, a number of obstacles present themselves:
    \begin{enumerate}
        \item{} A suitable replacement for a (smooth, $p$-dimensional) spectral triple is required for settings where $D$ is not self-adjoint.
        \item{} It is not clear how to define $F$ when $D$ is not self-adjoint.
        \item{} The operator $D^2$ should be given a suitable replacement when $D$ is not self-adjoint. As the above discussion for the interval $(0,a)$ shows, there is a substantial difference
                between the two positive self-adjoint operators $D^*D$ and $DD^*$.
    \end{enumerate}
    
    In this paper we propose solutions to these obstacles, and state a version of the Connes character formula which is valid in a non-self-adjoint setting (Theorem \ref{character theorem}).
    Our choice of definitions is motivated by the following model example: consider $\Omega$ an open bounded subset of $\Rl^d$ with smooth boundary,
    and take $D$ to be the Dirac operator with Dirichlet boundary conditions on $\Omega$ (we defer the precise description of this example until Section \ref{model examples section}).
    
    To replace a spectral triple we introduce the notion of a \pre-spectral triple (Definition \ref{\pre-spectral triple definition}). A \pre-spectral triple is defined analogously to a spectral triple, but without the requirement
    that $D$ be self-adjoint. Our definition is directly inspired by prior work of Hilsum \cite{Hilsum-2010}, and constructions along these lines can also be found in \cite{Bettaieb-Matthey-Valette-2005}, the approach suggested in 
    \cite[Page 164]{Blackadar-KT} and in \cite{Forsyth-Goffeng-Mesland-Rennie-arxiv}. 
    
    Our key new tool is that a \pre-spectral triple $(\Ac,H,D)$ can be ``tamed" into a genuine spectral triple by a procedure we call \doublinge. The \doubling procedure is based on the fact that if $D$ is closed and symmetric then the operator
    \begin{equation*}
        \begin{pmatrix} 0 & D^*\\ D & 0\end{pmatrix}
    \end{equation*}
    is self-adjoint on the domain $\dom(D)\oplus\dom(D^*)$ and that the following representation
    \begin{equation*}
        \Ac\ni a\mapsto  \frac 12 \begin{pmatrix} a & a\\ a & a\end{pmatrix}
    \end{equation*}
    of the algebra $\Ac$ sees this ``double" of $D$ as a single copy, thus in effect halving the double. This cloning procedure is moreover idempotent, when considering spectral triples up to those for which the action of the algebra is null. {\highlight Indeed, our ``cloned'' Hilbert space splits into an orthogonal direct sum, and in the self-adjoint case our algebra will act trivially on one component (see the argument at the end of Subsection \ref{spectral triple basic definitions} for further details).}
 
    We are then able to state the Connes character formula for a \pre-spectral triple in terms of its \doublee.
        
\subsection{Plan of this paper}
     In the following section, we introduce background material concerning operator inequalities.
     
     Afterwards, in Section \ref{pre spectral triples section} we develop the notion of a \pre-spectral triple and the \doubling construction. There, we define $p$-dimensional \pre-spectral triples (Definition \ref{dimension definition})
     and smoothly $p$-dimensional \pre-spectral triples (Definition \ref{def_smoothly_p_dim}).
     In Section \ref{sufficient conditions section} we return to the setting of spectral triples and describe sufficient conditions to state the character formula, and the main result of that section is 
     Corollary \ref{sufficient conditions for double to satisfy 1.2.1}, where we state sufficient conditions on a \pre-spectral triple so that we can state the character formula for its \doublee.
     
     Then in Section \ref{character formula section}, we state a version of the Connes Character Formula which is valid for \pre-spectral triple in Theorem \ref{character theorem}, which is a direct corollary of the Character Theorem
     for \emph{bona fide} spectral triples, in the specific form obtained in \cite{SZ-asterisque}.
     
     In Section \ref{model examples section} we discuss our model example of a symmetric operator to serve as motivation, and the remainder
     of the paper is dedicated to showing that our model examples do indeed satisfy the requirements of a \pre-spectral triple and the requirements for our version of the Character Formula to hold.
     
\section{Operators, ideals and traces}
    The following material is standard, and for further details we refer the reader to \cite{LSZ, Simon-trace-ideals-2005, Gohberg-Krein}.
    Let $H$ be a separable complex Hilbert space, and denote $\Bc(H)$  the algebra of bounded linear operators on $H$, and denote $\Kc(H)$ the ideal in $\Bc(H)$
    of compact operators.
    The sequence of singular values $\mu(T) = \{\mu(n,T)\}_{n=0}^\infty$ of a compact operator $T$ on $H$ is defined by
    \begin{equation*}
        \mu(n,T) := \inf\{\|T-R\|\;:\;\rank(R)\leq n\}.
    \end{equation*}
    Equivalently, $\mu(n,T)$ is the $n$th eigenvalue of $|T|$ listed in non-increasing order with multiplicities.
    
    For $p \in (0,\infty)$, the operator ideal $\Lc_p$ is the set of operators $T$ with $\mu(T)$ being $p$-summable, and $\Lc_{p,\infty}$ is the set of operators $T$ with $\mu(n,T) = O((n+1)^{-1/p})$,
    with corresponding quasi-norms:
    \begin{align*}
               \|T\|_{p} &:= \left(\sum_{n=0}^\infty \mu(n,T)^p\right)^{1/p},\\
        \|T\|_{p,\infty} &:= \sup_{n\geq 0} (n+1)^{1/p}\mu(n,T).
    \end{align*}
    A useful inequality is that for $p \geq 1$ there is a constant $c_p$ such that:
    \begin{equation}\label{interpolation inequality}
        \|T\|_1 \leq c_p\|T\|_{\frac{p}{p+1},\infty}^{\frac{p}{p+1}}\|T\|_{\infty}^{\frac{1}{p+1}}.
    \end{equation}
    
    For $q \in [1,\infty)$, we also consider the ideal $\Lc_{q,1}$, defined as the set of bounded operators $T$ on $H$ satisfying:
    \begin{equation*}
        \|T\|_{\Lc_{q,1}} := \sum_{n\geq 0} \frac{\mu(n,T)}{(n+1)^{1-\frac{1}{q}}} < \infty.
    \end{equation*}
    We have the following H\"older-type inequality, if $\frac{1}{p}+\frac{1}{q} = 1$ then:
    \begin{equation*}
        \|TS\|_1 \leq \|T\|_{p,\infty}\|S\|_{q,1}.
    \end{equation*}
    
    Given two bounded operators $T$ and $S$ on $H$, we say that $T$ is logarithmically submajorised by $S$, written $T\prec\prec_{\log} S$ if
    for all $n\geq 0$ we have:
    \begin{equation*}
        \prod_{k=0}^n \mu(k,T) \leq \prod_{k=0}^n \mu(k,S).
    \end{equation*}
    An important fact is that the $\Lc_{1,\infty}$ quasi-norm is monotone under logarithmic submajorisation. That is: if $T \prec\prec_{\log} S$, then \cite[Subsection 2.1.2]{SZ-asterisque}:
    \begin{equation}\label{monotonicity}
        \|T\|_{1,\infty} \leq e\|S\|_{1,\infty}.
    \end{equation}
    We also have that if $T$ and $S$ are positive and $p> 0$, then if $T \prec\prec_{\log} S$ we have:
    \begin{equation}\label{convexity}
        T^p\prec\prec_{\log} S^p.
    \end{equation}
    
    A fundamental result concerning logarithmic submajorisation is the Araki-Lieb-Thirring inequality \cite[Beginning of page 169]{Araki-1990} (see also \cite[Theorem 2]{Kosaki-alt-1992}), which states that $A$ and $B$ are positive bounded operators on $H$ and if $r \geq 1$ then:
    \begin{equation}\label{ALT inequality}
        |AB|^r \prec\prec_{\log} A^rB^r.
    \end{equation}
    So therefore if $A^rB^r \in \Lc_{1,\infty}$ then $AB \in \Lc_{r,\infty}$ and:
    \begin{equation*}
        \|AB\|_{r,\infty}^r \leq e\|A^rB^r\|_{1,\infty}.
    \end{equation*}
    
    A trace on $\Lc_{1,\infty}$ is a linear functional $\varphi:\Lc_{1,\infty}\to \Cplx$ which is unitarily invariant: i.e. for all unitary operators $U$ on $H$, and $T\in \Lc_{1,\infty}$ we have $\varphi(UTU^*) = \varphi(T)$. There
    are a plethora of traces on $\Lc_{1,\infty}$, including the well-known Dixmier traces.
    An important fact is that all traces on $\Lc_{1,\infty}$ vanish on $\Lc_1$, further details on this theory may be found in \cite[Section 5.7]{LSZ}. Finally, we call a trace normalised if $\varphi\left(\diag\left\{\frac{1}{n+1}\right\}_{n=0}^\infty\right) =  1$.

\section{Pre-spectral triples and \doubling}\label{pre spectral triples section}
\subsection{Definition of \pre-spectral triple and the \doubling construction}\label{spectral triple basic definitions}
    A \pre-spectral triple is a generalisation of the usual definition of a spectral triple (as in \cite{Connes-original-spectral-1995}, \cite[Definition 9.16]{GVF} and \cite[Definition 2.1]{CGRS2}), where the operator $D$ is no longer required to be self-adjoint but merely closed and symmetric.
    Prior work in defining analogues of spectral triples for non-self-adjoint operators includes that of Hilsum \cite{Hilsum-2010}, who developed the essential ideas contained in the following definition.
    \begin{definition}\label{\pre-spectral triple definition}
        A \pre-spectral triple is a triple $(\Ac,H,D)$ where
        \begin{enumerate}[{\rm (i)}]
            \item{}\label{hilbert space} $H$ is a Hilbert space
            \item{}\label{* algebra} $\Ac$ is a $*$-subalgebra of the algebra of bounded operators on $H$
            \item{}\label{closed symmetric} $D:\dom(D)\to H$ is a closed symmetric operator. 
            \item{}\label{partials assumption} For all $a \in \Ac$, we have that $a(\dom(D^*))\subseteq \dom(D)$, and the commutator $[D^*,a]:\dom(D^*)\to H$ has bounded extension, which we denote $\partial(a)$.
            \item{}\label{compact resolvent condition} {\highlight For all $a \in \Ac$ the operator $a(1+D^*D)^{-1/2}$ is compact.} 
        \end{enumerate}
        The Hilbert space $H$ may also be equipped with a grading $\gamma$, such that $\gamma$ commutes with $\Ac$ and anticommutes with $D$. In this case $(\Ac,H,D)$ is called an even \pre-spectral triple.
        
        If $D = D^*$, then $(\Ac,H,D)$ is called a spectral triple.
    \end{definition}
    
    The definition of a \pre-spectral triple should be compared with the conventional definition of a spectral triple \cite{Connes-original-spectral-1995}, we
    have relaxed the condition that $D$ be self-adjoint.
    It is also worth noting that if $\Ac$ is unital and its unit acts
as the identity in $H$ then the identity maps $\dom(D^*)$ into $\dom(D)$, so $\dom(D^*)\subseteq \dom(D)$; thus in this unital case $D$ is necessarily
    self-adjoint and we simply have a spectral triple. 
    
    \begin{remark}\label{pre_spectral_triple_remark}
        \begin{enumerate}[{\rm (i)}]
            \item{} {\highlight 
			 The definition of a pre-spectral triple should be compared to that of a half-closed chain, initiated in \cite{Hilsum-2010} and further studied in \cite[Definition 1]{Kaad-Suijlekom}.
 			The only difference is that a half-closed chain in the sense of \cite{Kaad-Suijlekom} is defined relative to a general Hilbert $C^*$-module, whereas a pre-spectral triple is defined   relative to a Hilbert space. While the cloning process which we introduce in Definition \ref{clone_def} will likely extend to half-closed chains,
			we have preferred to focus on the Hilbert space case. In this more special case, we have access not only to the $K$-homological aspects but also to the other aspects of spectral triples which are not necessarily homotopy invariant: such as the metric aspect and the link to quantum physics \cite{Chamseddine_Connes_Suijlekom}.}

            
            \item{} In Definition \ref{\pre-spectral triple definition} we have included the assumption \eqref{partials assumption} that $[D^*,a]:\dom(D^*)\to H$ has bounded extension. Since $a:\dom(D^*)\to \dom(D)$, we
                    could have equivalently assumed that $[D,a]:\dom(D)\to H$ has bounded extension. The equivalence of these assumptions is proved in \cite[Lemma 2.1]{Hilsum-2010}.
            {\highlight \item{}\label{commutating_remark} Note the useful identity that if $D = U|D|$ is a polar decomposition, we have $U|D| = |D^*|U$ \cite[Section 7.1]{Schmudgen-2012}, and hence that $Uf(|D|) = f(|D^*|)U$ for all Borel functions $f$.}
        \end{enumerate}
    \end{remark}
    
    {\highlight
    Remark \ref{pre_spectral_triple_remark}.\eqref{commutating_remark} gives us the following useful trick: if $f$ is a Borel function and $D = U|D|$ is a polar decomposition, then:
    \begin{equation}\label{useful_trick_1}
        D^*f(|D^*|) = U^*|D^*|f(|D^*|) = U^*f(|D^*|)|D^*| = f(|D|)U^*|D^*| = f(|D|)D^*.
    \end{equation}
    and similarly:
    \begin{equation}\label{useful_trick_2}
        Df(|D|) = f(|D^*|)D.
    \end{equation}

    Note that we have assumed in Definition \ref{\pre-spectral triple definition} only that $a(1+D^*D)^{-1/2}$ is compact for $a \in \Ac$. The following lemma shows that this assumption is enough to conclude that $a(1+DD^*)^{-1/2}$ is compact.
    The authors wish to extend their gratitude to Jens Kaad for providing us with the following proof:
    \begin{lemma}
        Let $(\Ac,H,D)$ be a pre-spectral triple. Then for all $a \in \Ac$ we have that $a(1+DD^*)^{-1/2}$ is compact.
    \end{lemma}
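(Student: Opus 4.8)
The plan is to recast the claim as a purely operator-theoretic comparison between the two positive self-adjoint operators $D^*D$ and $DD^*$. First I would reduce from $a(1+DD^*)^{-1/2}$ to $a(1+DD^*)^{-1}$: if the latter is compact then so is $a(1+DD^*)^{-1/2}\chi_{[0,n]}(DD^*) = \bigl(a(1+DD^*)^{-1}\bigr)(1+DD^*)^{1/2}\chi_{[0,n]}(DD^*)$ for every $n$, and these converge to $a(1+DD^*)^{-1/2}$ in operator norm because the tail $a(1+DD^*)^{-1/2}\chi_{(n,\infty)}(DD^*)$ has norm at most $\|a\|(n+1)^{-1/2}$; hence $a(1+DD^*)^{-1/2}$ is a norm-limit of compact operators. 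Since $\Ac$ is a $\ast$-algebra, ``$a(1+DD^*)^{-1}$ compact for all $a$'' is equivalent by adjunction to ``$(1+DD^*)^{-1}a$ compact for all $a$'', so it is enough to prove $a(1+DD^*)^{-1}$ compact for every $a \in \Ac$. (The same square-root argument applied to \eqref{compact resolvent condition} shows that $a(1+D^*D)^{-1}$ and $(1+D^*D)^{-1}a$ are compact for all $a \in \Ac$; these are what we feed in.)

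Next I would use the polar decomposition $D = U|D|$ together with the intertwining $Uf(|D|) = f(|D^*|)U$ from Remark~\ref{pre_spectral_triple_remark}.\eqref{commutating_remark}: comparing the spectral measures of $D^*D$ and $DD^*$, which agree on Borel subsets of $(0,\infty)$ and are related there by $U$, while their masses at $0$ are $P_{\ker D}$ and $P_{\ker D^*}$ respectively, gives
\[
  (1+DD^*)^{-1} = U(1+D^*D)^{-1}U^* + P_{\ker D^*},
\]
where $P_{\ker D^*}$ is the orthogonal projection onto $\ker D^* = \ker DD^*$. Thus $a(1+DD^*)^{-1} = aU(1+D^*D)^{-1}U^* + aP_{\ker D^*}$, and it suffices to show each summand is compact. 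For the second summand: $\ker D^* \subseteq \dom D^*$, so by \eqref{partials assumption} we have $a(\ker D^*) \subseteq \dom D$, and for $\xi \in \ker D^*$,
\[
  Da\xi = [D^*,a]\xi + aD^*\xi = \partial(a)\xi ,
\]
so that $\|(1+D^*D)^{1/2}a\xi\|^2 = \|a\xi\|^2 + \|Da\xi\|^2 \le \bigl(\|a\|^2 + \|\partial(a)\|^2\bigr)\|\xi\|^2$; hence $(1+D^*D)^{1/2}aP_{\ker D^*}$ extends to a bounded operator and $aP_{\ker D^*} = (1+D^*D)^{-1/2}C$ with $C$ bounded. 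A parallel computation (using $D^*U = |D|$ on $\dom D$, so that $D^*U(1+D^*D)^{-1}U^* = |D|(1+D^*D)^{-1}U^*$ has norm $\le \tfrac12$) shows $aU(1+D^*D)^{-1}U^*$ maps $H$ into $\dom D$ with $(1+D^*D)^{1/2}aU(1+D^*D)^{-1}U^*$ bounded, so $aU(1+D^*D)^{-1}U^* = (1+D^*D)^{-1/2}C'$ with $C'$ bounded.

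The main obstacle is to upgrade these two factorisations of the form $(1+D^*D)^{-1/2}\cdot(\text{bounded})$ from boundedness to compactness; equivalently, to show that $(1+D^*D)^{-1/2}a - a(1+DD^*)^{-1/2}$ is compact. Since $(1+D^*D)^{-1/2}$ itself is not compact (the algebra is nonunital), one cannot invoke \eqref{compact resolvent condition} directly, and one checks that any attempt to transfer compactness by formally commuting $D$, $D^*$ and their resolvents past $a$ only reproduces the statement tautologically; the assumption \eqref{compact resolvent condition} must be used essentially. The way I would finish is to pass to the positive operators $(aP_{\ker D^*})^*(aP_{\ker D^*}) = P_{\ker D^*}(a^*a)P_{\ker D^*}$ (and the analogue for the first summand), apply the boundedness just obtained with $a$ replaced by $b = a^*a \in \Ac$, and dominate the resulting positive operator by a compact one built from the known-compact $b(1+D^*D)^{-1/2}$, using the H\"older-type and log-submajorisation estimates of Section~2, notably the Araki--Lieb--Thirring inequality \eqref{ALT inequality}. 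I expect this last step — genuinely reconciling the two distinct self-adjoint extensions $D^*D$ and $DD^*$ of $D^2$, which in the motivating examples can differ by an infinite-rank resolvent perturbation — to be the heart of the argument.
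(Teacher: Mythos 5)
Your strategy is a genuine alternative to the paper's proof, and most of it is sound, but the proof as written is incomplete at precisely the step you yourself flag as ``the heart of the argument.'' The reduction to $a(1+DD^*)^{-1}$ compact, the spectral decomposition $(1+DD^*)^{-1} = U(1+D^*D)^{-1}U^* + P_{\ker D^*}$, and the two boundedness statements (that $(1+D^*D)^{1/2}aP_{\ker D^*}$ and $(1+D^*D)^{1/2}aU(1+D^*D)^{-1}U^*$ extend boundedly) are all correct. What is missing is any actual proof that the two summands $aP_{\ker D^*}$ and $aU(1+D^*D)^{-1}U^*$ are compact; you only establish factorisations of the form $(1+D^*D)^{-1/2}\cdot(\text{bounded})$, which of course is not compact. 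Your final paragraph gestures at a positive-operator domination via the Araki--Lieb--Thirring inequality, but gives no inequality that would actually apply: there is no compact operator in sight to dominate by, and replacing $a$ by $b=a^*a$ still only yields $bP_{\ker D^*} = (1+D^*D)^{-1/2}C_b$ with $C_b$ bounded, which leaves you exactly where you started.

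The gap is closable, though, and more cheaply than you expect---no submajorisation is needed. Writing $C := (1+D^*D)^{1/2}aP_{\ker D^*}$, you have $aP_{\ker D^*} = (1+D^*D)^{-1/2}C$, hence
\begin{equation*}
|aP_{\ker D^*}|^2 \;=\; P_{\ker D^*}\,a^*\,\bigl(aP_{\ker D^*}\bigr) \;=\; P_{\ker D^*}\cdot\bigl(a^*(1+D^*D)^{-1/2}\bigr)\cdot C,
\end{equation*}
and the middle factor $a^*(1+D^*D)^{-1/2}$ is compact by assumption~\eqref{compact resolvent condition} applied to $a^*\in\Ac$; thus $|aP_{\ker D^*}|^2$, and hence $aP_{\ker D^*}$, is compact. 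The identical manipulation with $C' := (1+D^*D)^{1/2}aU(1+D^*D)^{-1}U^*$ handles the other summand. With that repair your route is correct, and it is structurally quite different from the paper's: the paper never introduces the polar decomposition or the kernel projection, instead exploiting the algebraic identity $D^*D(1+D^*D)^{-1}=D^*(1+DD^*)^{-1}D$, the commutator $[D^*,a]$, and the Cohen--Hewitt factorisation $a=xy$ in the $C^*$-closure of $\Ac$. Your approach avoids the passage to the $C^*$-closure and the factorisation theorem entirely, at the cost of analysing the kernel of $D^*$ separately; it is arguably more geometric and self-contained, whereas the paper's is more algebraic and adapts more readily to the Hilbert-module setting.
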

    \begin{proof}
        We have that $a(1+DD^*)^{-1/2}$ is compact if and only if:
        \begin{equation*}
            a(1+DD^*)^{-1}a^*
        \end{equation*}
        is compact, and indeed it suffices that $a(1+DD^*)^{-1}$ be compact, and so we prove this latter assertion.
    
        Note that we have:
        \begin{equation*}
            \|a(1+D^*D)^{-1}\|_\infty\leq \|a\|_\infty
        \end{equation*}
        and thus for all $x$ in the $C^*$-closure of $\Ac$ we have $x(1+D^*D)^{-1}$ compact. Let $a \in \Ac$ and take $x$ in the $C^*$-closure of $\Ac$.
        Thanks to \eqref{useful_trick_2}, on $\dom(D)$ we have:
        $$D^*D(1+D^*D)^{-1}=D^*(1+DD^*)^{-1}D$$
        Noting that $D\subset D^*,$ we can write
        \begin{align*}
             xD^*D(1+D^*D)^{-1}a(1+DD^*)^{-1} &= xD^*(1+DD^*)^{-1}D^*a(1+DD^*)^{-1}\\
                                              &= xD^*(1+DD^*)^{-1}[D^*,a](1+DD^*)^{-1}\\
                                              &\quad +xD^*(1+DD^*)^{-1}aD^*(1+DD^*)^{-1}.
        \end{align*}
        However, \eqref{useful_trick_1} tells us that $D^*(1+DD^*)^{-1} = (1+D^*D)^{-1/2}D^*(1+DD^*)^{-1/2}$, and therefore it follows that:
        \begin{equation*}
            xD^*(1+DD^*)^{-1}
        \end{equation*}
        is compact.
        
        Therefore:
        \begin{equation*}
            xD^*D(1+D^*D)^{-1}a(1+DD^*)^{-1}
        \end{equation*}
        is compact.
        Thus, 
        \begin{equation*}
            xa(1+DD^*)^{-1} = x(1+D^*D)^{-1}a(1+DD^*)^{-1}+xD^*D(1+D^*D)^{-1}a(1+DD^*)^{-1}
        \end{equation*}
        is compact. However, since $\|xa(1+DD^*)^{-1}\|_\infty \leq \|x\|_\infty\|a\|_\infty$, we can pass to the $C^*$-closure to obtain that:
        \begin{equation*}
            xy(1+DD^*)^{-1}
        \end{equation*}
        is compact for all $x,y$ in the $C^*$-closure of $\Ac$.
        Since every element of a $C^*$-algebra can be factored into two other elements \cite[Proposition 1.4.5]{Pedersen}, we have that $a(1+DD^*)^{-1}$ is compact
    \end{proof}
    
    }
    
    A fundamental geometric property of a \pre-spectral triple is the summability, or dimension.
    \begin{definition}\label{dimension definition}
        Let $p > 0$. We shall say that a \pre-spectral triple is $p$-dimensional if for all $a \in \Ac$ we have:
        \begin{align*}
            a(1+|D|^2)^{-p/2},\; \partial(a)(1+|D|^2)^{-p/2} \in \Lc_{1,\infty},\\
            a(1+|D^*|^2)^{-p/2},\; \partial(a)(1+|D^*|^2)^{-p/2} \in \Lc_{1,\infty}.
        \end{align*}
    \end{definition}
    
    \begin{remark}
        There are some possible variations in the definition of dimension, for example we could have instead required that $a(1+|D|^2)^{-1/2} \in \Lc_{p,\infty}$, and similarly
        with $\partial(a)$ and $D^*$. We have selected Definition \ref{dimension definition} so that the \doubling construction introduced in the next section will yield
        a $p$-dimensional spectral triple (in the sense of \cite{SZ-asterisque}) automatically from a $p$-dimensional \pre-spectral triple.
    \end{remark}

    It is possible to convert a \pre-spectral triple into a genuine spectral triple by a procedure which we call ``\doublinge". The \double of a \pre-spectral triple
    preserves many of the properties of the original \pre-spectral triple, and is defined as follows:
    \begin{definition}\label{clone_def}
        Let $q$ denote the rank $1$ projection:
        \begin{equation*}
            q = \frac{1}{2}\begin{pmatrix} 1 & 1\\ 1 & 1\end{pmatrix}.
        \end{equation*}
    
        The \double of a \pre-spectral triple $(\Ac,H,D)$ is the triple $(\Ac \otimes q,H\otimes \Cplx^2,D_2)$, where
        $D_2$ is the operator:
        \begin{equation*}
            D_2 = \begin{pmatrix} 0 & D^* \\ D & 0\end{pmatrix}.
        \end{equation*}
        with domain $\dom(D)\oplus \dom(D^*)$.
        
        If $(\Ac,H,D)$ has a grading $\gamma$, then $H\otimes \Cplx^2$ is equipped with the grading $\gamma\otimes 1$.
    \end{definition}
    Since $q$ is a projection it is immediate that $\Ac\otimes q$ is an algebra.
    
    The purpose of the \doubling construction is to produce a genuine spectral triple from a \pre-spectral triple. The following proposition verifies this, and shows that the elementary properties of a \pre-spectral triple are reflected in its \doublee.
    \begin{proposition}\label{double is spectral triple}
        Let $(\Ac,H,D)$ be a \pre-spectral triple. Then,
        \begin{enumerate}[{\rm (i)}]
            \item{}\label{D_2 is self-adjoint} $D_2$ is self-adjoint.
            \item{}\label{double respects partials} For all $a \in \Ac$, we have that $a\otimes q$ maps $\dom(D_2)$ into $\dom(D_2)$, and the operator $[D_2,a\otimes q]:\dom(D_2)\to H\otimes\Cplx^2$ has bounded extension equal to $\partial(a)\otimes q$.
            \item{}\label{double respects compact resolvents} The \double $(\Ac\otimes q,H\otimes \Cplx^2,D_2)$ is a spectral triple.
            \item{}\label{double respects dimension} If $(\Ac,H,D)$ is $p$-dimensional then $(\Ac\otimes q,H\otimes\Cplx^2,D_2)$ is $p$-dimensional.
            \item{}\label{double respects gradings} If $(\Ac,H,D)$ is even with grading $\gamma$, then $(\Ac\otimes q,H\otimes \Cplx^2,D_2)$ is even with grading $\gamma\otimes 1$.
        \end{enumerate}
    \end{proposition}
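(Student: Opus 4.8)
The plan is to verify the five assertions in order, since each is a bookkeeping consequence of the definitions once part \eqref{D_2 is self-adjoint} and the Lemma above (compactness of $a(1+DD^*)^{-1/2}$) are in hand. For \eqref{D_2 is self-adjoint} I would compute the adjoint of an off-diagonal operator directly: if $A,B$ are densely defined and $T=\begin{pmatrix} 0 & A\\ B & 0\end{pmatrix}$ on $\dom(B)\oplus\dom(A)$, a short pairing argument gives $T^*=\begin{pmatrix} 0 & B^*\\ A^* & 0\end{pmatrix}$ on $\dom(A^*)\oplus\dom(B^*)$. With $A=D^*$ and $B=D$ this yields $\dom(D_2)=\dom(D)\oplus\dom(D^*)$ (as in Definition \ref{clone_def}) and $D_2^*=\begin{pmatrix} 0 & D^*\\ D^{**} & 0\end{pmatrix}$ on $\dom(D^{**})\oplus\dom(D^*)$; since $D$ is closed, $D^{**}=D$ and so $D_2^*=D_2$.

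For \eqref{double respects partials}, note $a\otimes q=\tfrac12\begin{pmatrix} a & a\\ a & a\end{pmatrix}$, so $(a\otimes q)(u_1,u_2)=\tfrac12\bigl(a(u_1+u_2),a(u_1+u_2)\bigr)$ for $(u_1,u_2)\in\dom(D_2)$. Since $\dom(D)\subseteq\dom(D^*)$ we have $u_1+u_2\in\dom(D^*)$, and Definition \ref{\pre-spectral triple definition}.\eqref{partials assumption} gives $a(u_1+u_2)\in\dom(D)\subseteq\dom(D^*)$, so $(a\otimes q)(u_1,u_2)\in\dom(D)\oplus\dom(D^*)=\dom(D_2)$. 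Computing $D_2(a\otimes q)-(a\otimes q)D_2$ on $\dom(D_2)$ and using $D\subseteq D^*$ to collapse the two rows, both components equal $\tfrac12\bigl(D^*a-aD^*\bigr)(u_1+u_2)=\tfrac12\,\partial(a)(u_1+u_2)$; hence the commutator agrees on the dense set $\dom(D_2)$ with the bounded operator $\partial(a)\otimes q$, which is therefore its bounded extension.

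For \eqref{double respects compact resolvents}, \eqref{double respects dimension}, \eqref{double respects gradings} I would use the block form $D_2^2=\begin{pmatrix} D^*D & 0\\ 0 & DD^*\end{pmatrix}$, so that $(1+D_2^2)^{-s}=\begin{pmatrix}(1+D^*D)^{-s} & 0\\ 0 & (1+DD^*)^{-s}\end{pmatrix}$ and $(a\otimes q)(1+D_2^2)^{-s}$ is a $2\times 2$ matrix whose entries are $\tfrac12 a(1+D^*D)^{-s}$ and $\tfrac12 a(1+DD^*)^{-s}$. Since $\Ac\otimes q$ is a $*$-algebra ($q$ being a self-adjoint projection) and \eqref{D_2 is self-adjoint}, \eqref{double respects partials} hold, \eqref{double respects compact resolvents} follows once the entries at $s=\tfrac12$ are compact: this holds by Definition \ref{\pre-spectral triple definition}.\eqref{compact resolvent condition} for the $(1+D^*D)^{-1/2}$ entries and by the Lemma above for the $(1+DD^*)^{-1/2}$ entries. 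For \eqref{double respects dimension}, take $s=p/2$; Definition \ref{dimension definition} puts $a(1+|D|^2)^{-p/2}$, $a(1+|D^*|^2)^{-p/2}$, $\partial(a)(1+|D|^2)^{-p/2}$, $\partial(a)(1+|D^*|^2)^{-p/2}$ in $\Lc_{1,\infty}$, and a block matrix lies in $\Lc_{1,\infty}(H\otimes\Cplx^2)$ exactly when each entry lies in $\Lc_{1,\infty}(H)$, so both $(a\otimes q)(1+D_2^2)^{-p/2}$ and $(\partial(a)\otimes q)(1+D_2^2)^{-p/2}=[D_2,a\otimes q](1+D_2^2)^{-p/2}$ lie in $\Lc_{1,\infty}$. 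Finally \eqref{double respects gradings}: $\gamma\otimes 1$ commutes with $a\otimes q$ since $\gamma$ commutes with $a$, and $\gamma\otimes 1$ anticommutes with $D_2$ because $\gamma$ anticommutes with $D$ and hence, by applying $\gamma(\,\cdot\,)\gamma$ and taking adjoints, with $D^*$ as well, while preserving $\dom(D)$ and $\dom(D^*)$; the block computation then gives $(\gamma\otimes 1)D_2=-D_2(\gamma\otimes 1)$ on $\dom(D_2)$.

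The only step requiring genuine care is \eqref{D_2 is self-adjoint}: one must identify the domain of the adjoint of the off-diagonal operator precisely and invoke closedness of $D$ to recover $D^{**}=D$, which is where an oversight would most easily slip in. There is otherwise no deep obstacle: the real analytic content — compactness of $a(1+DD^*)^{-1/2}$, without which \eqref{double respects compact resolvents} would fail — has already been isolated in the preceding Lemma, and the remaining parts are routine block-matrix manipulations.
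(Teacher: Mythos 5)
Your proposal is correct and follows essentially the same route as the paper's proof: compute $D_2^*$ in block form and invoke closedness to get $D^{**}=D$, reduce the commutator identity to the observation that $D$ and $D^*$ agree on $\dom(D)$, and then read off parts (iii)--(v) from the block decomposition of $(1+D_2^2)^{-s}$. The only differences are cosmetic — you carry out the commutator computation on a generic vector $(u_1,u_2)$ while the paper writes the operator matrix $\tfrac12\begin{pmatrix} D^*a-aD & D^*a-aD^*\\ Da-aD & Da-aD^*\end{pmatrix}$ and identifies each entry with $\partial(a)$, and you spell out the derivation of the adjoint of the off-diagonal block operator, which the paper simply asserts.
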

    \begin{proof}
        To prove \eqref{D_2 is self-adjoint}, consider $D_2^*$:
        \begin{equation*}
            D_2^{*} = \begin{pmatrix} 0 & D^*\\ D^{**} & 0 \end{pmatrix}.
        \end{equation*}
        Since $D$ is symmetric and closed, we have that $D^{**} = \overline{D}$ (\cite[Theorem VIII.1.b]{Reed-Simon-I-1980}), and since $D$ is closed, $D^{**} = D$, and so $D_2$ is 
        indeed self-adjoint.
    
        Now we prove \eqref{double respects partials}. Since $a:\dom(D^*)\to\dom(D)$, it is clear that $a\otimes q$ maps $\dom(D_2) = \dom(D)\oplus\dom(D^*)$ to $\dom(D_2)$. On $\dom(D)\oplus \dom(D^*)$ we have:
        \begin{equation*}
            [D_2,a\otimes q] = \frac{1}{2}\begin{pmatrix} D^*a-aD & D^*a-aD^*\\ Da-aD & Da-aD^* \end{pmatrix}.
        \end{equation*}
        Since $D$ is symmetric, for $\xi \in \dom(D)$ we have that $D\xi = D^*\xi$. Since $a$ takes $\dom(D^*)$ to $\dom(D)$ (and so also $\dom(D)$ to $\dom(D)$), the diagonal terms $D^*a-aD$
        and $Da-aD^*$ coincide with $[D,a]$ and $[D^*,a]$ on $\dom(D)$ and $\dom(D^*)$ respectively. Hence on $\dom(D)\oplus\dom(D^*)$ we have:
        \begin{equation*}
            [D_2,a\otimes q] = \frac{1}{2}\begin{pmatrix} [D,a] & [D^*,a]\\ [D,a] & [D^*,a]\end{pmatrix}.
        \end{equation*}
        By assumption, the commutator $[D^*,a]:\dom(D^*)\to H$ has bounded extension $\partial(a)$. Since $a:\dom(D)\to \dom(D)$, the operator $[D,a]:\dom(D)\to H$ is a restriction
        to $\dom(D)$ of a bounded operator $\partial(a)$. So indeed $[D_2,a\otimes q] = \partial(a)\otimes q$ on $\dom(D_2) = \dom(D)\oplus \dom(D^*)$, thus proving \eqref{double respects partials}.
        
        Now we prove \eqref{double respects compact resolvents}. We have already proved in parts \eqref{D_2 is self-adjoint} and \eqref{double respects partials} that $D_2$ is self-adjoint, $a\otimes q:\dom(D_2)\to\dom(D_2)$ and that $[D_2,a\otimes q]$ has bounded extension. What remains is
        to show that $(a\otimes q)(1+D_2^2)^{-1/2}$ is compact. By definition we have:
        \begin{equation*}
            (a\otimes q)(1+D_2^2)^{-1/2} = \frac{1}{2}\begin{pmatrix} a(1+D^*D)^{-1/2} & a(1+DD^*)^{-1/2}\\ a(1+D^*D)^{-1/2} & a(1+DD^*)^{-1/2} \end{pmatrix}
        \end{equation*}
        so indeed $a(1+D_2)^{-1/2}$ is compact if and only if each entry is compact, and thus \eqref{double respects compact resolvents} is proved. Also, \eqref{double respects dimension} follows
        immediately from the preceding display.
        
        Finally, to prove \eqref{double respects gradings}, it is trivial that for $a \in \Ac$, $a\otimes q$ commutes with $\gamma\otimes 1$, and that if $\gamma$ anticommutes with $D$, then it also
        anticommutes with $D^*$ since $\gamma^* = \gamma$. It is then easily verified that $\gamma\otimes 1$ anticommutes with $D_2$.
    \end{proof}
    
    The process of \doubling converts a \pre-spectral triple into a spectral triple (with self-adjoint $D$). This procedure
    is something like a ``completion". It is worthwhile to note that if we apply the \doubling procedure to a \pre-spectral triple
    $(\Ac,H,D)$ where $D$ is self-adjoint (that is, a spectral triple), then we get nothing new, in the sense that the resulting spectral
    triple is unitarily equivalent to $(\Ac\oplus 0,H\oplus H,-D\oplus D)$.
    
    To see this, consider $D_2$ when $D = D^*$,
    \begin{equation*}
        D_2 = \begin{pmatrix} 0 & D\\ D & 0 \end{pmatrix}
    \end{equation*}
    {\highlight
    and the unitary map $W:H\oplus H\to H\oplus H$ given by $W = \frac{1}{\sqrt{2}}\begin{pmatrix} 1 & 1 \\ -1 & 1 \end{pmatrix}$. Then,
    for $\xi \in H$,
    \begin{equation*}
        WD_2\begin{pmatrix} \xi \\ 0\end{pmatrix} = W\begin{pmatrix}0 \\ D\xi\end{pmatrix} = \frac{1}{\sqrt{2}}\begin{pmatrix} D\xi \\ D\xi\end{pmatrix} = \begin{pmatrix}D & 0 \\ 0 & -D\end{pmatrix}W\begin{pmatrix} \xi \\ 0 \end{pmatrix}
    \end{equation*}
    and
    \begin{equation*}
        (a\otimes q)W = \frac{1}{\sqrt{2}}\begin{pmatrix} a & 0 \\ -a & 0 \end{pmatrix} = W\begin{pmatrix}a & 0 \\ 0 & 0\end{pmatrix} = W(a\oplus 0).
    \end{equation*}
    So the map $W$ effects a unitary equivalence between the spectral triples
    $$(\Ac\otimes q,H\otimes \Cplx^2,D_2)$$
    and 
    $$(\Ac\oplus 0 ,H\otimes \Cplx^2,D\oplus -D).$$ }
    In this latter triple, since $\Ac$ acts trivially on the second component, we have essentially the same thing as $(\Ac,H,D)$. In particular, these two triples will define the same element in $K$-homology.
    It is in this sense that the \doubling procedure is ``idempotent".
        
\subsection{Smoothness of \pre-spectral triples}\label{properties subsection}
    Like spectral triples, \pre-spectral triples can be further described by their smoothness, or regularity.
    
    First, if $E$ is a closed operator, we define:
    \begin{equation*}
        \dom_{\infty}(E) = \bigcap_{n > 1} \dom(E^n).
    \end{equation*}
    We note that if $E$ is self-adjoint, then $\dom_{\infty}(E)$ is dense, as it contains at least $\dom(e^{|E|^2})$, which is dense (being the domain of a self-adjoint operator).
    
    In close parallel to the theory for spectral triples, smoothness is defined in terms of domains of certain commutators (c.f. \cite[Section 10.3]{GVF}, \cite[Section 2.3]{CGRS2}, \cite[Definition 2.2(iv)]{CGPRS}).
    \begin{definition}\label{def_R^k}
        Let $E$ be a self-adjoint operator on a Hilbert space $H$. If $T$ is a bounded operator on $H$ which maps $\dom_\infty(E)$ 
        to $\dom_\infty(E)$, then we define:
        \begin{equation*}
            R_E(T) = [E^2,T](1+E^2)^{-1/2}:\dom_\infty(E)\to H.
        \end{equation*}
        We define $\dom(R_E)$ to be the set of bounded operators $T$ on $H$ which map $\dom_\infty(E)$ to $\dom_\infty(E)$ and such that $R_E(T)$ has bounded extension, and we use the same symbol $R_E(T)$ for the bounded extension.
        
        Taking $R_E^1 = R_E$, we then define by induction:
        \begin{equation*}
            \dom(R_E^{k+1}) := \{T \in \dom(R_E^k)\;:\;R_E(T) \in \dom(R_E^k)\},\quad k\geq 1.
        \end{equation*}
        For $T \in \dom(R_E^k)$, $R_E^k(T)$ denotes the bounded extension of $(R_E\circ\cdots R_E)(T)$, where $R_E$ is composed $k$ times.
        
        Finally we set:
        \begin{equation*}
            \dom_\infty(R_E) := \bigcap_{k\geq 1} \dom(R_E^k).
        \end{equation*}
    \end{definition}
    
    We now can define the notion of smoothness for a \pre-spectral triple. The following definition is chosen so that the \double of a smooth \pre-spectral triple is smooth.
    \begin{definition}\label{smoothness definition}
        A \pre-spectral triple $(\Ac,H,D)$ is called smooth if for all $a \in \Ac$ and $n\geq 0$ we have that $a$ and $\partial(a)$ map $\dom((D^*)^n)$ into $\dom(D^n)$, and furthermore:
        \begin{equation*}
            a,\,\partial(a) \in \dom_\infty(R_{|D|})\cap \dom_\infty(R_{|D^*|}),
        \end{equation*}
    \end{definition}
    Of course, if $(\Ac,H,D)$ is a spectral triple (with $D = D^*$) then Definition \ref{smoothness definition} recovers the usual definition of smoothness (as in, e.g., \cite[Definition 2.27]{SZ-asterisque} and \cite[Appendix B]{Connes-Moscovici}).

    By assumption, if $(\Ac,H,D)$ is smooth, and $a \in \Ac$, we have automatically that $R_{|D^*|}(a)$ maps $\dom_\infty(|D^*|)$
    to $\dom_\infty(|D^*|)$. However in general more can be said, since we know the extra property that $a:\dom((D^*)^n)\to\dom(D^n)$. As
    the following lemma shows, $R_{|D^*|}(a)$ maps $\dom_\infty(|D^*|)$ not just to $\dom_\infty(|D^*|)$, but to the smaller subspace $\dom_\infty(D)$:
    \begin{lemma}\label{mixed smoothness conditions}
        Let $(\Ac,H,D)$ be a smooth \pre-spectral triple. Then, for all $x \in \Ac\cup\partial(\Ac)$, and for all $k\geq 0$,
        $R^k_{|D^*|}(x)$ maps $\dom_{\infty}(|D^*|)$ to $\dom_\infty(D)$, and similarly $R^k_{|D|}(x)$ maps $\dom_{\infty}(|D|)$ to $\dom_\infty(D)$.
    \end{lemma}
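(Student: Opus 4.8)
The plan is to prove the statement by induction on $k$, handling $R^k_{|D^*|}$ and $R^k_{|D|}$ in parallel. The only structural facts used beyond the smoothness hypothesis are that $D$ is symmetric --- so $D\subseteq D^*$, hence $\dom(D^n)\subseteq\dom((D^*)^n)$ and $D\xi=D^*\xi$ for $\xi\in\dom(D)$ --- and that $|D^*|^2=DD^*$ and $|D|^2=D^*D$ are self-adjoint.

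For the base case $k=0$ I would first establish $\dom_{\infty}(|D^*|)\subseteq\bigcap_{n\ge 1}\dom((D^*)^n)$ and $\dom_{\infty}(|D|)\subseteq\bigcap_{n\ge 1}\dom((D^*)^n)$. These follow from the claim that for every $m\ge 1$ one has $\dom((DD^*)^m)\subseteq\dom((D^*)^{2m})$ and $\dom((D^*D)^m)\subseteq\dom((D^*)^{2m})$, with $(D^*)^{2m}$ agreeing with $(DD^*)^m$, resp.\ $(D^*D)^m$, on these domains. The case $m=1$ is immediate: if $\xi\in\dom(DD^*)$ then $D^*\xi\in\dom(D)\subseteq\dom(D^*)$, so $\xi\in\dom((D^*)^2)$ and $(D^*)^2\xi=D(D^*\xi)=DD^*\xi$; and if $\xi\in\dom(D^*D)$ then $D^*\xi=D\xi\in\dom(D^*)$, so $\xi\in\dom((D^*)^2)$ and $(D^*)^2\xi=D^*D\xi$. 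The general $m$ follows by a short induction from the composition identities $(DD^*)^{m+1}=DD^*\circ(DD^*)^m$ and $(D^*D)^{m+1}=D^*D\circ(D^*D)^m$. Granting this, smoothness of $(\Ac,H,D)$ says that $x$ maps each $\dom((D^*)^n)$ into $\dom(D^n)$, so $x$ maps $\dom_{\infty}(|D^*|)$ into $\dom(D^n)$ for every $n$, hence into $\dom_{\infty}(D)$; likewise for $\dom_{\infty}(|D|)$. This settles $k=0$.

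For the inductive step the key observation is that $D^*D$ and $DD^*$ both act as $D^2$ on $\dom_{\infty}(D)$, and in particular map $\dom_{\infty}(D)$ into itself: for $\eta\in\dom_{\infty}(D)$ we have $\eta\in\dom(D)\subseteq\dom(D^*)$ and $D\eta\in\dom_{\infty}(D)\subseteq\dom(D)$, so $D^*\eta=D\eta$ and hence $D^*D\eta=D(D\eta)=D^2\eta$ and $DD^*\eta=D^2\eta$, both in $\dom_{\infty}(D)$. Now suppose $y:=R^k_{|D^*|}(x)$ maps $\dom_{\infty}(|D^*|)$ into $\dom_{\infty}(D)$; smoothness ensures $y$ is bounded and preserves $\dom_{\infty}(|D^*|)$. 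For $\xi\in\dom_{\infty}(|D^*|)$, put $\xi'=(1+|D^*|^2)^{-1/2}\xi$, which again lies in $\dom_{\infty}(|D^*|)$ as $(1+|D^*|^2)^{-1/2}$ is a bounded function of $|D^*|$. Then
\[
  R^{k+1}_{|D^*|}(x)\,\xi \;=\; \bigl[\,|D^*|^2,\,y\,\bigr]\xi' \;=\; |D^*|^2(y\xi') \;-\; y\bigl(|D^*|^2\xi'\bigr).
\]
By the inductive hypothesis $y\xi'\in\dom_{\infty}(D)$, so $|D^*|^2(y\xi')=DD^*(y\xi')=D^2(y\xi')\in\dom_{\infty}(D)$ by the observation; and $|D^*|^2\xi'\in\dom_{\infty}(|D^*|)$, so $y\bigl(|D^*|^2\xi'\bigr)\in\dom_{\infty}(D)$ by the inductive hypothesis again. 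Hence $R^{k+1}_{|D^*|}(x)\xi\in\dom_{\infty}(D)$, closing the induction; the identical computation with $|D^*|$ replaced by $|D|$ and $DD^*$ by $D^*D$ handles $R^k_{|D|}$.

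I expect the base case to be the only point requiring real care: the smoothness hypothesis is stated in terms of the domains $\dom(D^n)$ and $\dom((D^*)^n)$ of powers of the operators, whereas the conclusion concerns $\dom_{\infty}(|D|)$ and $\dom_{\infty}(|D^*|)$, the domains of powers of the moduli, and the bridge --- the inclusions $\dom(|D^*|^{2m}),\dom(|D|^{2m})\subseteq\dom((D^*)^{2m})$ --- is precisely where symmetry of $D$ enters and would fail for a general closed operator. The inductive step, by contrast, reduces cleanly to the algebraic collapse $D^2=D^*D=DD^*$ on $\dom_{\infty}(D)$ together with routine bookkeeping with the commutator expression defining $R_{|D^*|}$.
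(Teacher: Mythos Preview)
Your proof is correct and follows essentially the same line as the paper's. The paper organises the argument slightly differently: rather than inducting on $k$, it expands $R^k_{|D^*|}(x)\xi$ directly via the binomial formula $\sum_{j=0}^k (-1)^j\binom{k}{j}|D^*|^{2k-2j}x|D^*|^{2j}(1+|D^*|^2)^{-k/2}\xi$ and checks that each summand lands in $\dom_\infty(D)$, using exactly the two ingredients you isolate --- that $x$ maps $\dom_\infty(|D^*|)$ into $\dom_\infty(D)$, and that $|D^*|^{2m}=(DD^*)^m$ acts as $D^{2m}$ on $\dom_\infty(D)$. Your induction is the step-by-step version of the same computation; the one place you are more careful is the base case, where you spell out the inclusion $\dom_\infty(|D^*|)\subseteq\bigcap_n\dom((D^*)^n)$ that the paper invokes without comment.
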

    \begin{proof}   
        Take $\xi \in \dom_\infty(|D^*|)$. We have:
        \begin{equation*}
            R^k_{|D^*|}(x)\xi = \sum_{j=0}^k (-1)^j\binom{k}{j}|D^*|^{2k-2j}x|D^*|^{2j}(1+|D^*|^2)^{-k/2}\xi.
        \end{equation*}
        We will show that each summand is in $\dom_{\infty}(D)$. For each $j$, we have that:
        \begin{equation*}
            |D^*|^{2j}(1+|D^*|^2)^{-k/2}\xi \in \dom_{\infty}(|D^*|).
        \end{equation*}
        Then, since $(\Ac,H,D)$ is smooth, $x$ maps $\dom_{\infty}(|D^*|)$ to $\dom_{\infty}(D)$. Now, since $|D^*|^{2k-2j} = (DD^*)^{k-j}$, $|D^*|^{2k-2j}$ acts as
        $D^{2k-2j}$ on $\dom_{\infty}(D)$. Also,
        \begin{equation*}
            |D^*|^{2k-2j}x|D^*|^{2j}(1+|D^*|^2)^{-k/2}\xi \in \dom_{\infty}(D).
        \end{equation*}
        Thus $R^k_{|D^*|}(x)\xi \in \dom_{\infty}(D)$, and this proves the first claim.
        The argument for $R^{k}_{|D|}(x)$ is similar, but instead uses the property that $x$ maps $\dom_{\infty}(|D|)$ into $\dom_{\infty}(D)$.
    \end{proof}

    As expected, if $(\Ac,H,D)$ is a smooth \pre-spectral triple then its \double is smooth:
    \begin{theorem}\label{double of smooth is smooth}
        Let $(\Ac,H,D)$ be a smooth \pre-spectral triple. Then:
        \begin{enumerate}[{\rm (i)}]
            \item{}\label{double respects lambdas} For all $x \in \Ac\cup \partial(\Ac)$ we have that $x\otimes q \in \dom_\infty(R_{D_2})$, and for all $k\geq 1$:
                    \begin{equation}\label{formula for double R^k}
                        R^k_{D_2}(x\otimes q) = \frac{1}{2}\begin{pmatrix} R^k_{|D|}(x) & R^k_{|D^*|}(x) \\ R^k_{|D|}(x) & R^k_{|D^*|}(x) \end{pmatrix}.
                    \end{equation}
            \item{}\label{double respects smoothness} The \double $(\Ac\otimes q,H\otimes\Cplx^2,D_2)$ is a smooth spectral triple.
            \item{}\label{double respects 1.2.1} Let $\mathcal{E}$ be an ideal of compact operators (such as the ideal $\Lc_{p,\infty}$), let $x \in \Ac\cup\partial(\Ac)$ and let $k\geq 1$. Then 
                    $R^k_{D_2}(x\otimes q)(1+D_2)^{-1/2} \in \mathcal{E}$
                    if and only if both $R_{|D|}^k(x)(1+|D|^2)^{-1/2}$ and $R^k_{|D^*|}(x)(1+|D^*|^2)^{-1/2}$ are in $\mathcal{E}$.
        \end{enumerate}
    \end{theorem}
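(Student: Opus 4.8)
The plan is to reduce the whole theorem to part \eqref{double respects lambdas} and prove that part by induction, exploiting throughout the block structure of $D_2$. A direct computation on $\dom(D)\oplus\dom(D^*)$ gives $D_2^2 = (D^*D)\oplus(DD^*) = |D|^2\oplus|D^*|^2$, hence $|D_2| = |D|\oplus|D^*|$, so that $\dom_\infty(D_2) = \dom_\infty(|D|)\oplus\dom_\infty(|D^*|)$ and $(1+D_2^2)^{-s} = (1+|D|^2)^{-s}\oplus(1+|D^*|^2)^{-s}$ for every $s$. Granting \eqref{formula for double R^k}, part \eqref{double respects 1.2.1} is immediate: multiplying \eqref{formula for double R^k} on the right by the block-diagonal operator $(1+D_2^2)^{-1/2}$ produces a $2\times 2$ block matrix each of whose entries is $\frac12 R^k_{|D|}(x)(1+|D|^2)^{-1/2}$ or $\frac12 R^k_{|D^*|}(x)(1+|D^*|^2)^{-1/2}$, and a block operator matrix lies in (the ampliation to $2\times 2$ matrices of) an operator ideal if and only if each of its entries does. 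Part \eqref{double respects smoothness} follows from Proposition \ref{double is spectral triple} together with \eqref{double respects lambdas} applied to $x = a$ and $x = \partial(a)$ and the domain bookkeeping discussed below, since smoothness of the spectral triple $(\Ac\otimes q,H\otimes\Cplx^2,D_2)$ amounts to $a\otimes q$ and $\partial(a)\otimes q$ each preserving every $\dom(D_2^n)$ and lying in $\dom_\infty(R_{D_2})$.

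It therefore remains to prove \eqref{double respects lambdas}, by induction on $k$. Fix $x\in\Ac\cup\partial(\Ac)$. On the dense subspace $\dom_\infty(D_2)=\dom_\infty(|D|)\oplus\dom_\infty(|D^*|)$ one computes
\begin{equation*}
[D_2^2,\,x\otimes q] = \frac{1}{2}\begin{pmatrix} |D|^2x - x|D|^2 & |D|^2x - x|D^*|^2 \\ |D^*|^2x - x|D|^2 & |D^*|^2x - x|D^*|^2 \end{pmatrix}.
\end{equation*}
The crucial point is that, restricted to this domain, the two ``off-diagonal'' entries collapse to honest commutators: by Lemma \ref{mixed smoothness conditions} in its $k=0$ instance, $x$ maps both $\dom_\infty(|D|)$ and $\dom_\infty(|D^*|)$ into $\dom_\infty(D)$, on which $D^*D$ and $DD^*$ both act as $D$ composed with itself; hence $(|D^*|^2x - x|D|^2)\xi = [|D|^2,x]\xi$ for $\xi\in\dom_\infty(|D|)$ and $(|D|^2x - x|D^*|^2)\eta = [|D^*|^2,x]\eta$ for $\eta\in\dom_\infty(|D^*|)$. (The same fact shows $x\otimes q$ maps $\dom_\infty(D_2)$ into itself, so that $R_{D_2}(x\otimes q)$ is defined at all.) Multiplying on the right by $(1+D_2^2)^{-1/2}$ and using its block-diagonal form yields \eqref{formula for double R^k} for $k=1$ on $\dom_\infty(D_2)$; since $R_{|D|}(x)$ and $R_{|D^*|}(x)$ are bounded (smoothness of the \pre-spectral triple), the right-hand side extends to a bounded operator and $x\otimes q\in\dom(R_{D_2})$.

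For the inductive step, suppose \eqref{formula for double R^k} holds for some $k\geq 1$, so that $R^k_{D_2}(x\otimes q) = \frac12\left(\begin{smallmatrix} A & B \\ A & B \end{smallmatrix}\right)$ with $A = R^k_{|D|}(x)$, $B = R^k_{|D^*|}(x)$. By Lemma \ref{mixed smoothness conditions}, $A$ maps $\dom_\infty(|D|)$ into $\dom_\infty(D)$ and $B$ maps $\dom_\infty(|D^*|)$ into $\dom_\infty(D)$, whence $R^k_{D_2}(x\otimes q)$ preserves $\dom_\infty(D_2)$; running the same commutator computation with $A$ replacing $x$ in the first column of $[D_2^2,x\otimes q]$ and $B$ in the second (and again using that $D^*D$ and $DD^*$ agree with $D$ composed with itself on $\dom_\infty(D)$) gives
\begin{equation*}
R_{D_2}\bigl(R^k_{D_2}(x\otimes q)\bigr) = \frac{1}{2}\begin{pmatrix} R^{k+1}_{|D|}(x) & R^{k+1}_{|D^*|}(x) \\ R^{k+1}_{|D|}(x) & R^{k+1}_{|D^*|}(x) \end{pmatrix},
\end{equation*}
which is bounded by smoothness. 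This advances the induction and shows $x\otimes q\in\dom(R^{k+1}_{D_2})$; intersecting over all $k$ gives $x\otimes q\in\dom_\infty(R_{D_2})$, which proves \eqref{double respects lambdas}.

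The one genuinely delicate point, everything else being matrix bookkeeping, is the domain analysis behind the ``off-diagonal entries collapse to commutators'' step: one must know that $x$ and the iterates $R^k_{|D|}(x),\,R^k_{|D^*|}(x)$ land not merely in $\dom_\infty(|D|)$ or $\dom_\infty(|D^*|)$ but in the smaller subspace $\dom_\infty(D)$, on which the distinct positive self-adjoint operators $|D|^2 = D^*D$ and $|D^*|^2 = DD^*$ become interchangeable with $D$ composed with itself. This is precisely what Lemma \ref{mixed smoothness conditions} supplies, via the elementary inclusions $\dom_\infty(|D|),\,\dom_\infty(|D^*|)\subseteq\dom_\infty(D^*)$ and $\dom_\infty(D)\subseteq\dom_\infty(|D|)\cap\dom_\infty(|D^*|)$ forced by the symmetry $D\subseteq D^*$, combined with the hypothesis in Definition \ref{smoothness definition} that $x$ maps each $\dom((D^*)^n)$ into $\dom(D^n)$.
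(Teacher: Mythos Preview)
Your proposal is correct and follows essentially the same approach as the paper's proof: the block-diagonal decomposition $D_2^2 = |D|^2\oplus|D^*|^2$, the induction on $k$ for \eqref{formula for double R^k}, the use of Lemma \ref{mixed smoothness conditions} to show the off-diagonal entries collapse to honest commutators (because $D^*D$ and $DD^*$ agree on $\dom_\infty(D)$), and the reduction of \eqref{double respects smoothness} and \eqref{double respects 1.2.1} to \eqref{double respects lambdas} are all exactly what the paper does. The only cosmetic difference is that the paper takes $k=0$ as its base case and your write-up effectively starts at $k=1$, but the content is the same.
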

    \begin{proof}
        First we show \eqref{double respects lambdas}. Let $x \in \Ac\cup \partial(\Ac)$. We have that:
        \begin{equation*}
            D_2^2 = \begin{pmatrix} D^*D & 0 \\ 0 & DD^* \end{pmatrix}
        \end{equation*}
        and
        \begin{equation*}
            \dom(D_2^2) = \dom(D^*D)\oplus \dom(DD^*).
        \end{equation*}
        Therefore:
        \begin{equation*}
            \dom_{\infty}(D_2) = \dom_{\infty}(|D|)\oplus \dom_{\infty}(|D^*|).
        \end{equation*}
        By the definition of smoothness (Definition \ref{smoothness definition}), $x$ maps $\dom_{\infty}(|D^*|)$ and $\dom_{\infty}(|D|)$ into $\dom_{\infty}(D)$, and hence $x\otimes q$ maps $\dom_{\infty}(D_2)$ into
        \begin{equation*}
            \dom_{\infty}(D)\oplus \dom_{\infty}(D) \subseteq \dom_{\infty}(D_2).
        \end{equation*}
        
        We now proceed to showing that $x\otimes q \in \dom(R^k_{D_2})$ and that \eqref{formula for double R^k} holds by induction on $k$, with the base case $k=0$ being
        immediate. Now supposing \eqref{formula for double R^k} is true for $k\geq 0$ we prove it for $k+1$. We begin by showing that $R^k_{D_2}(x\otimes q)\in \dom(R_{D_2})$.
        
        First, by assumption $x\in \dom_{\infty}(R_{|D|})\cap \dom_{\infty}(R_{|D^*|})$, so by definition:
        \begin{equation*}
            R^k_{|D|}(x):\dom_{\infty}(|D|)\to \dom_{\infty}(|D|),\quad R^k_{|D^*|}(x):\dom_{\infty}(|D^*|)\to \dom_{\infty}(|D^*|)
        \end{equation*}
        and thus by our assumption that \eqref{formula for double R^k} holds and since $${\dom_{\infty}(D_2)=\dom_{\infty}(|D|)\oplus \dom_{\infty}(|D^*|)},$$ we have that
        \begin{equation*}
            R^k_{D_2}(x):\dom_{\infty}(D_2)\to \dom_{\infty}(D_2).
        \end{equation*}
        Now we show that $R_{D_2}(R^k_{D_2}(x\otimes q))$ has bounded extension. On $\dom_{\infty}(D_2)$, by definition we have:
        \begin{align*}
            &R_{D_2}(R^k_{D_2}(x\otimes q)) = [D_2^2,R^k_{D_2}(x\otimes q)](1+D_2^2)^{-1/2}\\
                                           &= \frac{1}{2}\begin{pmatrix} 
                                                                D^*DR^k_{|D|}(x) - R^k_{|D|}(x)D^*D &  D^*DR^k_{|D^*|}(x) - R^k_{|D^*|}(x)DD^* \\
                                                                DD^*R^k_{|D|}(x) - R^k_{|D|}(x)D^*D &  DD^*R^k_{|D^*|}(x) - R^k_{|D^*|}(x)DD^*
                                                         \end{pmatrix}\\
                                           &\quad \quad \cdot (1+D_2^2)^{-1/2}\\
                                           &= \frac{1}{2}\begin{pmatrix}
                                                                R^{k+1}_{|D|}(x) & \mathbf{A}_1\\
                                                                \mathbf{A}_2 & R^{k+1}_{|D^*|}(x)
                                                         \end{pmatrix}
        \end{align*}
        where 
        \begin{align*}
            \mathbf{A}_1 &= (D^*DR^k_{|D^*|}(x)-R^k_{|D^*|}(x)DD^*)(1+|D^*|^2)^{-1/2},\text{ and }\\
            \mathbf{A}_2 &= (DD^*R^k_{|D|}(x)-R^k_{|D|}(x)D^*D)(1+|D|^2)^{-1/2}.
        \end{align*}
        By Lemma \ref{mixed smoothness conditions} we have that $R^k_{|D^*|}(x)$ maps $\dom_{\infty}(|D^*|)$ into $\dom_{\infty}(D)$. Using the fact that $D^*D$ and $DD^*$ coincide on $\dom_\infty(D)$ we have the following for $\xi \in \dom_{\infty}(|D^*|)$:
        \begin{equation*}
            D^*DR^k_{|D^*|}(x)\xi = D^2R^k_{|D^*|}(x)\xi = DD^*R^k_{|D^*|}(x)\xi,
        \end{equation*}
        and similarly for $\xi \in \dom_{\infty}(|D|)$:
        \begin{equation*}
            DD^*R^k_{|D|}(x)\xi = D^2R^k_{|D|}(x)\xi = D^*DR^k_{|D|}(x)\xi.
        \end{equation*}
        Hence, on $\dom_\infty(D_2)=\dom_{\infty}(|D|)\oplus \dom_{\infty}(|D^*|)$, we have:
        \begin{equation*}
            R^{k+1}_{D_2}(x\otimes q) = \frac{1}{2}\begin{pmatrix} R^{k+1}_{|D|}(x) & R^{k+1}_{|D^*|}(x) \\ R^{k+1}_{|D|}(x) & R^{k+1}_{|D^*|}(x) \end{pmatrix}.
        \end{equation*}
        By assumption, each entry has bounded extension, and so finally $R^{k}_{D_2}(x\otimes q)\in \dom(R_{D_2})$, thus $x\otimes q \in \dom(R^{k+1}_{D_2})$ and \eqref{formula for double R^k}
        holds for $k+1$. This proves the inductive step and so \eqref{double respects lambdas} is proved.
        
        It follows from \eqref{double respects lambdas} that if $x \in \Ac\cup \partial(\Ac)$ and $(\Ac,H,D)$ is smooth, then $x\otimes q \in \dom_{\infty}(R_{D_2})$, and so by definition $(\Ac\otimes q,H\otimes\Cplx^2,D_2)$ is smooth, thus
        proving \eqref{double respects smoothness}.
        
        Finally, to prove \eqref{double respects 1.2.1}, we simply use \eqref{double respects lambdas} to get:
        \begin{equation*}
            R^k_{D_2}(x\otimes q)(1+D_2^2)^{-1/2} = \frac{1}{2}\begin{pmatrix} R_{|D|}^k(x)(1+|D|^2)^{-1/2} & R^k_{|D^*|}(x)(1+|D^*|^2)^{-1/2} \\ R^k_{|D|}(x)(1+|D|^2)^{-1/2} & R^k_{|D^*|}(x)(1+|D^*|^2)^{-1/2} \end{pmatrix}
        \end{equation*}
        and \eqref{double respects 1.2.1} immediately follows.
    \end{proof}
    
    In the setting of non-unital spectral triples, it is generally not sufficient to treat smoothness and summability separately,
    as generally we require not only that $x(1+D^2)^{-1/2}$ is compact but for all $k$ that $R^{k}_D(x)(1+D^2)^{-1/2}$ be compact. In the non-unital self-adjoint setting
    assumptions of this nature are common: for example the notion of smooth summability in \cite[Definition 2.19]{CGRS2} and the additional conditions required in \cite[Hypothesis 1.2.1]{SZ-asterisque}.
    
    In the setting of \pre-spectral triples, we have found that the following definition is appropriate:
    \begin{definition}\label{def_smoothly_p_dim}
        A smooth $p$-dimensional \pre-spectral triple is called {\bf smoothly $p$-dimensional} if for all $k\geq 0$ and $x \in \Ac\cup\partial(\Ac)$, we have:
        \begin{equation*}
            R_{|D^*|}^k(x)(1+|D^*|^2)^{-p/2} \in \Lc_{1,\infty}
        \end{equation*}
        and
        \begin{equation*}
            R_{|D|}^k(x)(1+|D|^2)^{-p/2} \in \Lc_{1,\infty}.
        \end{equation*}
    \end{definition}
    
    An alternative definition of being smoothly $p$-dimensional would be to assert that $R^k_{|D^*|}(x)(1+|D^*|^2)^{-1/2}$ is in $\Lc_{p,\infty}$ (and similarly with $D$ in place if $D^*$).
    The following lemma shows how our chosen definition implies this alternative definition.
    \begin{lemma}\label{strong implies weak dimension}
        If $(\Ac,H,D)$ is a smoothly $p$-dimensional \pre-spectral triple with $p \geq 1$ then for all $k\geq 0$ we have:
        \begin{equation*}
            R_{|D^*|}^k(x)(1+|D^*|^2)^{-1/2} \in \Lc_{p,\infty}
        \end{equation*}
        and
        \begin{equation*}
            R_{|D|}^k(x)(1+|D|^2)^{-1/2} \in \Lc_{p,\infty}.
        \end{equation*}
    \end{lemma}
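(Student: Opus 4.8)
The plan is to deduce the $\Lc_{p,\infty}$ statement from the $\Lc_{1,\infty}$ hypothesis by means of the Araki--Lieb--Thirring inequality \eqref{ALT inequality}. Fix $k\geq 0$ and $x\in\Ac\cup\partial(\Ac)$; I treat $|D^*|$ only, the case of $|D|$ being word-for-word identical. Write $y:=R^k_{|D^*|}(x)$, which is a bounded operator because $(\Ac,H,D)$ is smooth, and $B:=(1+|D^*|^2)^{-1/2}\geq 0$, which is bounded as well. The hypothesis that the triple is smoothly $p$-dimensional says precisely that $yB^p\in\Lc_{1,\infty}$, and the goal is to show $yB\in\Lc_{p,\infty}$.

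First I would reduce to the case $y\geq 0$. For any bounded operator $A$, writing $z:=|A|$, and any positive bounded $B$, one has $|AB|^2=BA^*AB=Bz^2B=|zB|^2$ (using $A^*A=z^2$ and $z=z^*\geq 0$), hence $\mu(n,AB)=\mu(n,zB)$ for all $n$. Applying this with $A=y$, for both $B$ and $B^p$, allows me to replace $y$ by $z:=|y|\geq 0$, with $\|z\|_\infty=\|y\|_\infty$: thus $zB^p\in\Lc_{1,\infty}$, and it suffices to prove $zB\in\Lc_{p,\infty}$. Since $p\geq 1$, writing $z^pB^p=z^{p-1}(zB^p)$ with $z^{p-1}$ bounded shows $z^pB^p\in\Lc_{1,\infty}$. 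Now the Araki--Lieb--Thirring inequality \eqref{ALT inequality} with $r=p$ gives $|zB|^p\prec\prec_{\log}z^pB^p$, and the monotonicity of the $\Lc_{1,\infty}$ quasi-norm under logarithmic submajorisation \eqref{monotonicity} then yields $|zB|^p\in\Lc_{1,\infty}$, that is, $zB\in\Lc_{p,\infty}$. Undoing the reduction, $yB=R^k_{|D^*|}(x)(1+|D^*|^2)^{-1/2}\in\Lc_{p,\infty}$.

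I do not expect a serious obstacle here; the only point that needs care is that one cannot simply ``take $p$-th roots'', since $z$ and $B$ do not commute, and it is precisely this non-commutativity that the Araki--Lieb--Thirring inequality is designed to handle. The remaining bookkeeping is the passage from $zB^p\in\Lc_{1,\infty}$ --- which is what the hypothesis literally provides, after the polar-decomposition reduction of the second paragraph --- to $z^pB^p\in\Lc_{1,\infty}$, which is what Araki--Lieb--Thirring consumes; this costs only the bounded factor $z^{p-1}$ because $p\geq 1$.
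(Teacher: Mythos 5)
Your proposal is correct and follows essentially the same route as the paper's proof: both hinge on the Araki--Lieb--Thirring inequality \eqref{ALT inequality} followed by monotonicity of the $\Lc_{1,\infty}$ quasi-norm under logarithmic submajorisation \eqref{monotonicity}. You have in fact been slightly more careful: the paper passes without comment from the hypothesis $R^k_{|D^*|}(x)(1+|D^*|^2)^{-p/2}\in\Lc_{1,\infty}$ to the assertion $|R^k_{|D^*|}(x)|^p(1+|D^*|^2)^{-p/2}\in\Lc_{1,\infty}$, whereas your polar-decomposition reduction and the factoring out of the bounded operator $|y|^{p-1}$ make this step explicit.
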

    \begin{proof}
        Since $p\geq 1$, we may apply the Araki-Lieb-Thirring inequality \eqref{ALT inequality},
        \begin{equation*}
            |R_{|D^*|}^k(x)(1+|D^*|^2)^{-1/2}|^p \prec\prec_{\log} |R^{k}_{|D^*|}(x)|^p(1+|D^*|^2)^{-p/2}.
        \end{equation*}
        By the definition of being smoothly $p$-dimensional, we have that $|R^{k}_{|D^*|}(x)|^p(1+D^*D)^{-p/2} \in \Lc_{1,\infty}$, and hence by \eqref{monotonicity}, it follows that $R_{|D^*|}^k(x)(1+|D^*|^2)^{-1/2} \in \Lc_{p,\infty}$.
        
        We can give an identical argument with $|D|$ in place of $|D^*|$ to arrive at the second assertion.
    \end{proof}

\section{Sufficient conditions on a spectral triple to apply the Character Formula}\label{sufficient conditions section}
    Having described how a \pre-spectral triple can be converted into a true spectral triple by the \doubling procedure, our main goal is to state the Character Formula
    for the \doublee. To actually state the Character Formula for a (genuine) spectral triple $(\Bc,K,T)$ requires certain assumptions on $(\Bc,K,T)$.
    
    We follow the prescription given in \cite{SZ-asterisque}. For a spectral triple $(\Bc,K,T)$, $\delta(x)$ denotes the bounded extension of the commutator $[|T|,x]$ when it exists.    
    The following is identical to \cite[Hypothesis 1.2.1]{SZ-asterisque}.
    \begin{hypothesis}\label{main assumption}
        The spectral triple $(\Bc,K,T)$ satisfies the following assumptions:
        \begin{enumerate}[{\rm (i)}]
            \item{}\label{ass0} $(\Bc,K,T)$ is smooth.
            \item{}\label{ass1} There is a positive integer $p\geq 1$ such that $(\Bc,K,T)$ is $p$-dimensional. That is, for $x \in \Bc\cup\partial(\Bc)$ we have $x(T+i)^{-p} \in \Lc_{1,\infty}$.
            \item{}\label{ass2} For all $x \in \Bc\cup\partial(\Bc)$ and all $k\geq 0$, we have the following asymptotic:
            \begin{equation*}
                \|\delta^k(x)(T+i\lambda)^{-1}\|_{\Lc_1} = O(\lambda^{-1})\,\quad \lambda\to\infty.
            \end{equation*} 
        \end{enumerate}
    \end{hypothesis}
    The emphasis in \cite{SZ-asterisque} was to work with minimal assumptions. If one is willing to make more stringent assumptions on $(\Bc,K,T)$, then it is possible
    to provide more easily verified sufficient conditions for Hypothesis \ref{main assumption} to hold. This is what we provide in the following two results.
    
    Denote $U := \chi_{[0,\infty)}(T) - \chi_{(-\infty,0)}(T)$ so that $U$ is unitary and $T = U|T|$, then define $T_0 = U(1+T^2)^{1/2}$. It is proved in \cite[Proposition 3.1.4]{SZ-asterisque}
    that $(\Bc,K,T_0)$ is a spectral triple satisfying Hypothesis \ref{main assumption} if and only if $(\Bc,K,T)$ is. Let $\partial_0(x)$ and $\delta_0(x)$ denote the bounded extension
    of the commutators $[T_0,x]$ and $[|T_0|,x]$ respectively, when they exist. 
    

    Smoothness of a spectral triple may be equivalently defined in terms of $\delta_0$ or $R_T$. This equivalence first appeared in \cite[Appendix B]{Connes-Moscovici}. The following lemma shows
    a related result, although a technicality resulting from the lack of a Banach norm on $\Lc_{1,\infty}$ and a corresponding Bochner integration theory case makes the proof in the $p=1$ case quite technical,
    and so we will defer the proof to Appendix \ref{lambda to delta appendix}.
    \begin{lemma}\label{lambda to delta}
        Let $(\Bc,K,T)$ be a smooth spectral triple, and let $p \geq 1$, and let $x \in \Bc\cup \partial_0(\Bc)$. Then:
        \begin{equation}\label{lambda condition}
            R_{T}^k(x)(1+T^2)^{-1/2} \in \Lc_{p,\infty}
        \end{equation}
        for all $k\geq 0$ if and only if
        \begin{equation}\label{delta condition}
            \delta_0^k(x)(1+T^2)^{-1/2} \in \Lc_{p,\infty}
        \end{equation}
        for all $k\geq 0$.        
    \end{lemma}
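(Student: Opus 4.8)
\emph{Proposed proof.} Throughout write $\Lambda:=(1+T^{2})^{1/2}$, so that $\Lambda=|T_{0}|$, $\delta_{0}(x)=[\Lambda,x]$, and $R_{T}(x)=[\Lambda^{2},x]\Lambda^{-1}$ (the two possible bracketings in $R_{T}$ agree since $\Lambda^{2}=1+T^{2}$ commutes with $T^{2}$). By the Connes--Moscovici equivalence of smoothness conditions \cite{Connes-Moscovici}, for $x\in\Bc\cup\partial_{0}(\Bc)$ all of $x$, $\delta_{0}^{j}(x)$, $R_{T}^{j}(x)$ lie in the common domain $\dom_{\infty}(\delta_{0})=\dom_{\infty}(R_{T})$, and all the manipulations below take place there. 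I will use two elementary algebraic facts. First, $[\Lambda^{2},x]=\Lambda[\Lambda,x]+[\Lambda,x]\Lambda=2[\Lambda,x]\Lambda+[\Lambda,[\Lambda,x]]$ gives the relation
\begin{equation*}
    R_{T}(x)=2\delta_{0}(x)+\delta_{0}^{2}(x)\Lambda^{-1}.
\end{equation*}
Second, since $\Lambda^{2}$ and $\Lambda^{-1}$ are functions of $\Lambda$, the operations $\delta_{0}=\operatorname{ad}_{\Lambda}$ and $R_{T}=\operatorname{ad}_{\Lambda^{2}}(\,\cdot\,)\,\Lambda^{-1}$ commute with one another and each commutes with right multiplication by $\Lambda^{-1}$ (e.g. $\delta_{0}(y\Lambda^{-1})=\delta_{0}(y)\Lambda^{-1}$, $R_{T}(y\Lambda^{-1})=R_{T}(y)\Lambda^{-1}$). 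For the implication \eqref{delta condition}$\Rightarrow$\eqref{lambda condition} this already suffices: iterating the displayed relation and using the commutativity properties just noted, one proves by induction on $k$ that $R_{T}^{k}(x)=\sum_{j=0}^{k}c_{k,j}\,\delta_{0}^{k+j}(x)\Lambda^{-j}$ for some scalars $c_{k,j}$; multiplying on the right by $\Lambda^{-1}$ writes $R_{T}^{k}(x)(1+T^{2})^{-1/2}$ as a finite sum of operators $c_{k,j}\bigl(\delta_{0}^{k+j}(x)(1+T^{2})^{-1/2}\bigr)\Lambda^{-j}$, each of which lies in $\Lc_{p,\infty}$ under \eqref{delta condition} because $\Lambda^{-j}$ is bounded and $\Lc_{p,\infty}$ is an ideal.

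The reverse implication \eqref{lambda condition}$\Rightarrow$\eqref{delta condition} is the substantial one, since the relation above can only be ``solved'' for $\delta_{0}$ in terms of infinitely many $R_{T}^{j}$; here an integral identity is needed. Starting from $\Lambda^{-1}=\tfrac{1}{\pi}\int_{0}^{\infty}\lambda^{-1/2}(\Lambda^{2}+\lambda)^{-1}\,d\lambda$ one obtains $[\Lambda^{-1},y]=-\tfrac{1}{\pi}\int_{0}^{\infty}\lambda^{-1/2}(\Lambda^{2}+\lambda)^{-1}[\Lambda^{2},y](\Lambda^{2}+\lambda)^{-1}\,d\lambda$; commuting the left resolvent past $[\Lambda^{2},y]$ and using $[\Lambda^{2},[\Lambda^{2},y]]=R_{T}^{2}(y)\Lambda^{2}$ together with $\tfrac{1}{\pi}\int_{0}^{\infty}\lambda^{-1/2}(\Lambda^{2}+\lambda)^{-2}\,d\lambda=\tfrac12\Lambda^{-3}$ separates a ``first order'' term from a remainder integral quadratic in $R_{T}$. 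Feeding this into $\delta_{0}(y)=[\Lambda^{2}\Lambda^{-1},y]=\Lambda^{2}[\Lambda^{-1},y]+R_{T}(y)$ and absorbing the leading factor $\Lambda^{2}$ by $\Lambda^{2}B\Lambda^{-2}=B+[\Lambda^{2},B]\Lambda^{-2}$, a short computation yields
\begin{equation*}
    \delta_{0}(y)(1+T^{2})^{-1/2}=\tfrac12 R_{T}(y)(1+T^{2})^{-1/2}-\tfrac12 R_{T}^{2}(y)(1+T^{2})^{-1}+\Phi\bigl(R_{T}^{2}(y)(1+T^{2})^{-1/2}\bigr),
\end{equation*}
where $\Phi(S):=\tfrac{1}{\pi}\int_{0}^{\infty}\lambda^{-1/2}\,\Lambda^{2}(\Lambda^{2}+\lambda)^{-1}\,S\,\Lambda^{2}(\Lambda^{2}+\lambda)^{-2}\,d\lambda$. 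Since $\Lambda^{2}=1+T^{2}\ge 1$ we have $\|\Lambda^{2}(\Lambda^{2}+\lambda)^{-1}\|_{\infty}\le 1$ and $\|\Lambda^{2}(\Lambda^{2}+\lambda)^{-2}\|_{\infty}=O((1+\lambda)^{-1})$, while $\int_{0}^{\infty}\lambda^{-1/2}(1+\lambda)^{-1}\,d\lambda<\infty$, so $\Phi$ is a bounded operator on $\Bc(H)$, and since $\Lc_{p,\infty}$ is an ideal the domination $\|MSN\|_{p,\infty}\le\|M\|_{\infty}\|S\|_{p,\infty}\|N\|_{\infty}$ makes $\Phi$ formally bounded $\Lc_{p,\infty}\to\Lc_{p,\infty}$.

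Granting that $\Phi$ indeed maps $\Lc_{p,\infty}$ boundedly into itself, the proof of \eqref{lambda condition}$\Rightarrow$\eqref{delta condition} closes by iteration. Because $\delta_{0}$ and $R_{T}$ commute and $R_{T}$ commutes with right multiplication by $\Lambda^{-1}$, applying the displayed identity with $y=R_{T}^{m}(x)$ gives
\begin{equation*}
    R_{T}^{m}(\delta_{0}(x))(1+T^{2})^{-1/2}=\delta_{0}(R_{T}^{m}(x))(1+T^{2})^{-1/2}=\tfrac12 R_{T}^{m+1}(x)(1+T^{2})^{-1/2}-\tfrac12 R_{T}^{m+2}(x)(1+T^{2})^{-1}+\Phi\bigl(R_{T}^{m+2}(x)(1+T^{2})^{-1/2}\bigr),
\end{equation*}
which lies in $\Lc_{p,\infty}$ for every $m\ge 0$ under hypothesis \eqref{lambda condition}. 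Hence \eqref{lambda condition} for $x$ propagates to \eqref{lambda condition} for $\delta_{0}(x)$; iterating with $\delta_{0}(x),\delta_{0}^{2}(x),\dots$ in the role of $x$ (legitimate, since each $\delta_{0}^{k}(x)\in\dom_{\infty}(\delta_{0})=\dom_{\infty}(R_{T})$) yields $R_{T}^{m}(\delta_{0}^{k}(x))(1+T^{2})^{-1/2}\in\Lc_{p,\infty}$ for all $m,k\ge 0$, and the case $m=0$ is precisely \eqref{delta condition}.

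The one point requiring genuine care --- and the expected main obstacle --- is the boundedness of $\Phi$ on $\Lc_{p,\infty}$. For $p>1$ it is immediate: $\Lc_{p,\infty}$ admits an equivalent Banach norm, so $\Phi(S)$ is a bona fide Bochner integral valued in $\Lc_{p,\infty}$ and the domination above gives $\|\Phi(S)\|_{p,\infty}\lesssim\|S\|_{p,\infty}$. For $p=1$ there is no Banach norm on $\Lc_{1,\infty}$, hence no Bochner integration theory, and one must instead estimate the singular values of $\Phi(S)$ by hand: split the $\lambda$-integral into dyadic blocks, bound the $\|\cdot\|_{1,\infty}$-quasinorm of each block via the ideal property, and sum the resulting (summable) estimates. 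This is exactly the technicality deferred to Appendix \ref{lambda to delta appendix}.
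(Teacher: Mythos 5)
Your overall strategy is the same as the paper's: represent $|T_0|$ (equivalently $|T_0|^{-1}$) by the resolvent integral, integrate by parts to write $\delta_0(y)$ as a finite linear combination of $R_T^j(y)$-terms plus a remainder integral quadratic (or higher) in $R_T$, and then propagate through a double induction using the commutation of $\delta_0$, $R_T$ and right multiplication by $|T_0|^{-1}$. The easy direction and the iteration scheme match the paper's, and for $p>1$ your Bochner-integral argument for the boundedness of $\Phi$ is correct (and simpler than what the paper does). The gap is exactly where you flag it, and for $p=1$ it is not closed. Your remainder $\Phi\bigl(R_T^2(y)(1+T^2)^{-1/2}\bigr)$ has integrand of $\Lc_{p,\infty}$-quasi-norm $O(\lambda^{-3/2})$, and the quantitative integration result the paper relies on (\cite[Corollary 3.7]{LeSZ2}, quoted in Appendix \ref{lambda to delta appendix}) requires $\sum_j \|F\|_{C^2([j,j+1],\Lc_{p,\infty})}^{p/(p+1)}<\infty$; with decay $(j+1)^{-3/2}$ this forces $\frac{3p}{2(p+1)}>1$, i.e.\ $p>2$, so it fails precisely in the range $1\le p\le 2$ that contains the critical case. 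The paper's proof of Lemma \ref{lambda to delta auxiliary lemma} integrates by parts \emph{twice more}, pushing the remainder down to $R_T^4(X)$ with integrand decay $\lambda^{-5/2}$, which turns the summability condition into $3p>2$ and covers all $p\ge 1$. That extra integration by parts is the actual content of the appendix, not an incidental technicality.

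Your proposed repair for $p=1$ --- ``split into dyadic blocks, bound the quasi-norm of each block via the ideal property, and sum'' --- does not work as stated. The ideal property gives only the pointwise bound $\|M(\lambda)SN(\lambda)\|_{1,\infty}\le\|M(\lambda)\|_\infty\|S\|_{1,\infty}\|N(\lambda)\|_\infty$; since $\|\cdot\|_{1,\infty}$ is not subadditive, this does not control the quasi-norm of the \emph{integral} over a block (Riemann sums with $N$ terms pick up a factor growing with $N$ under the $q$-triangle inequality, $q<1$). To bound a block integral one still needs the $C^2$-regularity mechanism of \cite{LeSZ2} applied to each rescaled block; a dyadic rescaling combined with that lemma can in fact be made to work with only the $R_T^2$ remainder (block $[2^j,2^{j+1}]$ contributes $O(2^{-j/2})$ and $\sum_j 2^{-j/4}<\infty$), but none of this is in your write-up, and ``the ideal property'' alone is not a substitute for it. As it stands the proposal proves the lemma for $p>1$ only; for $p=1$ you should either carry out the two further integrations by parts as the paper does, or supply the rescaled-block integration argument in full.
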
    
    
    Now we state sufficient conditions on a spectral triple for Hypothesis \ref{main assumption} to hold. The crucial algebraic assumption on $\Bc$ is that it satisfies
    the ``factorisation" property that $\Bc = \Bc\cdot\Bc$. This is easily verified in the geometric examples we consider in Section \ref{model examples section}, but was avoided in \cite{SZ-asterisque} in favour
    of greater generality. The application of this property in noncommutative geometry can also be seen in \cite{CGPRS}.
    \begin{theorem}\label{sufficient conditions for 1.2.1}
        Let $(\Bc,K,T)$ be a smooth spectral triple such that:
        \begin{enumerate}[{\rm (i)}]
            \item{} There exists $p \in \Ntrl$ such that for all $k \geq 0$ and $x \in \Bc\cup \partial_0(\Bc),$
                    \begin{equation*}
                        \delta_0^k(x)(1+T^2)^{-1/2} \in\mathcal{L}_{p,\infty}
                    \end{equation*}
            \item{} Every $a\in\Bc$ can be written as $a=a_1a_2$ with $a_1,a_2\in\mathcal{B},$
        \end{enumerate}
        Then, for all $k\geq 0$ and $x \in \Bc\cup \partial_0(\Bc)$ we have:
        \begin{equation*}
            \delta^k_{0}(x)(1+T^2)^{-p/2} \in \Lc_{1,\infty}.
        \end{equation*}
        Moreover, $(\mathcal{B},K,T_0)$ satisfies Hypothesis \ref{main assumption}.
    \end{theorem}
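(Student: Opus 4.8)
\emph{Plan.} The first assertion is the substantive one, and the final statement then follows by unwinding definitions and re-using the same device. Throughout put $\Lambda := (1+T^2)^{1/2} = |T_0|$, so that $\delta_0(S) = [\Lambda, S]$ is a derivation on its domain and, for $S$ in that domain, one has the elementary identity $S\Lambda^{-1} = \Lambda^{-1}S + \Lambda^{-1}\delta_0(S)\Lambda^{-1}$. My plan for the first assertion is: use the factorisation hypothesis (ii) of the theorem repeatedly to write each $a \in \Bc$ as a product of $p$ elements of $\Bc$, and (applying the Leibniz rule to $\partial_0$) each element of $\partial_0(\Bc)$ as a finite sum of products $b_1\cdots b_p$ with all $b_i \in \Bc \cup \partial_0(\Bc)$. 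Since $\delta_0$ is a derivation, $\delta_0^k(b_1\cdots b_p)$ is then a finite linear combination of monomials $\delta_0^{j_1}(b_1)\cdots\delta_0^{j_p}(b_p)$ with $j_1+\cdots+j_p=k$, so it suffices to control such monomials.

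The heart of the argument is the following claim, to be proved by induction on $m$: for all $b_1,\dots,b_m \in \Bc\cup\partial_0(\Bc)$ and all $j_1,\dots,j_m\geq 0$,
\begin{equation*}
    \delta_0^{j_1}(b_1)\cdots\delta_0^{j_m}(b_m)\,\Lambda^{-m} \in \Lc_{p/m,\infty}.
\end{equation*}
The base case $m=1$ is exactly hypothesis (i) of the theorem. For the inductive step I would split $\Lambda^{-m} = \Lambda^{-1}\Lambda^{-(m-1)}$ and commute the factor $\Lambda^{-1}$ leftwards past $P := \delta_0^{j_2}(b_2)\cdots\delta_0^{j_m}(b_m)$ using the identity above; this produces a ``leading'' term $(\delta_0^{j_1}(b_1)\Lambda^{-1})(P\Lambda^{-(m-1)})$ and a ``correction'' $(\delta_0^{j_1}(b_1)\Lambda^{-1})(\delta_0(P)\Lambda^{-(m-1)})\Lambda^{-1}$. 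By the Leibniz rule $\delta_0(P)$ is again a sum of $(m-1)$-fold $\delta_0$-monomials in the $b_i$, so the inductive hypothesis places $P\Lambda^{-(m-1)}$ and $\delta_0(P)\Lambda^{-(m-1)}$ in $\Lc_{p/(m-1),\infty}$, while $\delta_0^{j_1}(b_1)\Lambda^{-1}\in\Lc_{p,\infty}$ by hypothesis (i); since $\tfrac1p+\tfrac{m-1}{p}=\tfrac mp$, the H\"older inequality for weak Schatten ideals puts both terms in $\Lc_{p/m,\infty}$. Taking $m=p$ gives $\delta_0^{j_1}(b_1)\cdots\delta_0^{j_p}(b_p)\Lambda^{-p}\in\Lc_{1,\infty}$, and summation over the monomials yields the first assertion. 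The one place that needs care is the bookkeeping of $\delta_0$-orders; this is manageable precisely because the corrections only ever \emph{raise} the order of $\delta_0$, which is harmless as hypothesis (i) is assumed for all $k$.

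For the final statement I would check the three conditions of Hypothesis \ref{main assumption} for $(\Bc,K,T_0)$. Condition \ref{ass0} (smoothness of $(\Bc,K,T_0)$) follows from the assumed smoothness of $(\Bc,K,T)$ together with \cite[Proposition 3.1.4]{SZ-asterisque} and the equivalence between the $R_T$- and $\delta_0$-formulations of smoothness (cf.\ Lemma \ref{lambda to delta}). Condition \ref{ass1} ($p$-dimensionality): since $|(T_0+i)^{-p}|^2 = (2+T^2)^{-p}$ is comparable to $\Lambda^{-2p}$, this is exactly the $k=0$ instance of the first assertion, applied to $x\in\Bc\cup\partial_0(\Bc)$.

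Condition \ref{ass2} is the point I expect to be the main obstacle. One needs a trace-norm asymptotic of the form $\|\delta_0^k(x)\,R\|_{\Lc_1} = O(\lambda^{-1})$ for $x \in \Bc\cup\partial_0(\Bc)$, where $R$ is the relevant resolvent power of $T_0$ and $|R|^2$ is comparable to $(1+T^2+\lambda^2)^{-(p+1)}$. Here I would extend the induction above to $m=p+1$ factors — which uses the factorisation hypothesis (ii) a second time, to write $x$ as a product of $p+1$ pieces — obtaining $\delta_0^k(x)\Lambda^{-(p+1)}\in\Lc_{p/(p+1),\infty}$ with quasi-norm independent of $\lambda$. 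Writing $(1+T^2+\lambda^2)^{-(p+1)/2} = \Lambda^{-(p+1)}(\Lambda(1+T^2+\lambda^2)^{-1/2})^{p+1}$ with $\|\Lambda(1+T^2+\lambda^2)^{-1/2}\|_\infty\leq 1$, it follows that $\delta_0^k(x)R \in\Lc_{p/(p+1),\infty}$ with $\lambda$-uniform quasi-norm and with operator norm $O(\lambda^{-(p+1)})$; the interpolation inequality \eqref{interpolation inequality} (whose exponents match $q=p/(p+1)$) then converts these into $\|\delta_0^k(x)R\|_{\Lc_1} = O((\lambda^{-(p+1)})^{1/(p+1)}) = O(\lambda^{-1})$. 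The real difficulty throughout this last part is analytic: commuting the resolvents past the $\delta_0$-monomials while keeping the dependence on $\lambda$ exactly correct, so that the final rate is genuinely $\lambda^{-1}$ and not merely $\lambda^{-\theta}$ for some smaller $\theta$.
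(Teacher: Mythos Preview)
Your proposal is correct and follows essentially the same strategy as the paper: commute copies of $\Lambda^{-1}$ through $\delta_0$-monomials via the derivation identity and H\"older, then reach the $\Lc_1$ asymptotic by pushing to order $p+1$ and invoking the interpolation inequality \eqref{interpolation inequality}. The only difference is organizational --- the paper inducts on the power $j$ of $|T_0|^{-1}$ and factors $a=a_1a_2$ afresh at each step, whereas you pre-factor into $p$ (resp.\ $p+1$) pieces and induct on the number of factors --- but these are two presentations of the same computation.
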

    \begin{proof}
        We will prove by induction that for all $j \in \mathbb{N}$ we have
        $$\delta_0^k(a)|T_0|^{-j}\in\mathcal{L}_{\frac{p}{j},\infty}$$
        and
        $$\delta_0^k(\partial_0(a))|T_0|^{-j}\in\mathcal{L}_{\frac{p}{j},\infty}.$$
        For $j=1,$ this is our assumption.

        Next, suppose the claim is true for $j\geq 1$ and let us prove it for $j+1.$ By the factorisation property we can choose $a_1,a_2\in \Ac$ so that $a = a_1a_2$. Using the Leibniz rule, 
        \begin{align*}
        \delta_0^k(a) &= \delta_0^k(a_1a_2)\\
                      &= \sum_{\substack{l_1,l_2\geq 0\\ l_1+l_2=k}}\binom{k}{l_1}\delta_0^{l_1}(a_1)\delta_0^{l_2}(a_2).
        \end{align*}
        Hence,
        $$\delta_0^k(a)|T_0|^{-j-1}=\sum_{\substack{l_1,l_2\geq 0\\ l_1+l_2=k}}\binom{k}{l_1}\delta_0^{l_1}(a_1)\delta_0^{l_2}(a_2)|T_0|^{-j-1}.$$
        Now using the identity $[|T_0|^{-1},\delta_0^{l_2}(x)]= -|T_0|^{-1}\delta_0^{l_2+1}(x)|T_0|^{-1}$,
        \begin{align*}
            \delta_0^{l_2}(a_2)|T_0|^{-j-1} &= |T_0|^{-1}\delta_0^{l_2}(a_2)|T_0|^{-j}-[|T_0|^{-1},\delta_0^{l_2}(a_2)]|T_0|^{-j}\\
                                            &= |T_0|^{-1}\delta_0^{l_2}(a_2)|T_0|^{-j}+|T_0|^{-1}\delta_0^{l_2+1}(a_2)|T_0|^{-j-1}.
        \end{align*}
        Therefore,
        \begin{align*}
            \delta_0^k(a)|T_0|^{-j-1} &= \sum_{\substack{l_1,l_2\geq 0\\ l_1+l_2=k}}\binom{k}{l_1}\delta_0^{l_1}(a_1)|T_0|^{-1}\delta_0^{l_2}(a_2)|T_0|^{-j}\\
                                      &\quad +\sum_{\substack{l_1,l_2\geq 0\\ l_1+l_2=k}}\binom{k}{l_1}\delta_0^{l_1}(a_1)|T_0|^{-1}\delta_0^{l_2+1}(a_2)|T_0|^{-j-1}.
        \end{align*}
        By the inductive assumption and H\"older's inequality:
        \begin{align*}
                              \delta_0^{l_1}(a_1)|T_0|^{-1}\cdot\delta_0^{l_2}(a_2)|T_0|^{-j} &\in \mathcal{L}_{p,\infty}\cdot\mathcal{L}_{\frac{p}{j},\infty}\subset\mathcal{L}_{\frac{p}{j+1},\infty},\\
            \delta_0^{l_1}(a_1)|T_0|^{-1}\cdot\delta_0^{l_2+1}(a_2)|T_0|^{-j}\cdot |T_0|^{-1} &\in \mathcal{L}_{p,\infty}\cdot\mathcal{L}_{\frac{p}{j},\infty}\cdot\mathcal{L}_{\infty}\subset\mathcal{L}_{\frac{p}{j+1},\infty}.
        \end{align*}
        Hence,
        $$\delta_0^k(a)|T_0|^{-j-1}\in\mathcal{L}_{\frac{p}{j+1},\infty}.$$
        
        By a similar inductive argument, one can prove that
        $$\delta_0^k(\partial_0(a))|T_0|^{-j-1}\in\mathcal{L}_{\frac{p}{j+1},\infty}.$$
        
        Taking $j=p$ yields the first claim. Now we show that Hypothesis \ref{main assumption} is satisfied.

        Taking $k=0$ and $j=p,$ we have:
        $$a|T_0|^{-p}\in\mathcal{L}_{1,\infty},\quad \partial_0(a)|T_0|^{-p}\in\mathcal{L}_{1,\infty}.$$
        Hence,
        $$a(T+i)^{-p}\in\mathcal{L}_{1,\infty},\quad\partial_0(a)(T+i)^{-p}\in\mathcal{L}_{1,\infty}.$$
        So $(\Bc,K,T)$ is $p$-dimensional. This verifies Hypothesis \ref{main assumption}.\eqref{ass1}.

        Now we focus on proving Hypothesis \ref{main assumption}.\eqref{ass2}. By applying the $j=p+1$ case, and using the fact that $\frac{|T_0|}{T_0+i\lambda}$ is bounded, we have:
        \begin{align*}
            \|\delta_0^k(a)(T_0+i\lambda)^{-p-1}\|_{\frac{p}{p+1},\infty} &\leq \|\delta_0^k(a)|T_0|^{-p-1}\|_{\frac{p}{p+1},\infty}\\
                                                                          &= O(1), \quad \lambda \to \infty.
        \end{align*}
        We also have, as $\lambda\to \infty$,
        $$\|\delta_0^k(a)(T_0+i\lambda)^{-p-1}\|_{\infty}\leq\|\delta_0^k(a)\|_{\infty}\|(T_0+i\lambda)^{-1}\|_{\infty}^{p+1}=O(\lambda^{-p-1})$$
        where the final equality follows from the operator inequality $|T_0+i\lambda|^{-1}\leq \lambda^{-1}$.
        
        Using the inequality \eqref{interpolation inequality} it follows that
        \begin{align*}
            \|\delta_0^k(a)(T_0+i\lambda)^{-p-1}\|_1 &\leq c_p\|\delta_0^k(a)(T_0+i\lambda)^{-p-1}\|_{\frac{p}{p+1},\infty}^{\frac{p}{p+1}}\|\delta_0^k(a)(T_0+i\lambda)^{-p-1}\|_{\infty}^{\frac{1}{p+1}}\\
                                                     &= O(1)^{\frac{p}{p+1}}\cdot O(\lambda^{-p-1})^{\frac1{p+1}}\\
                                                     &=O(\lambda^{-1}).
        \end{align*}
        By an identical argument, we also have:
        $$\|\delta_0^k(\partial_0(a))(T_0+i\lambda)^{-p-1}\|_1 = O(\lambda^{-1}).$$
        This verifies Hypothesis \ref{main assumption}.\eqref{ass2} for $(\Bc,K,T_0)$. 
        By \cite[Proposition 3.1.4.(iv)]{SZ-asterisque}, it follows that $(\Bc,K,T)$ also satisfies Hypothesis \ref{main assumption}.
    \end{proof}
    
    The preceding theorem has the following useful corollary in terms of $R_T$:
    \begin{corollary}\label{cor_R_diff_powers_diff_Schatten}
        Let $(\Bc,K,T)$ be a smooth spectral triple such that:
        \begin{enumerate}[{\rm (i)}]
            \item{} There exists $p \in \Ntrl$ such that for all $k \geq 0$ and $x \in \Bc\cup \partial_0(\Bc),$
                    \begin{equation*}
                        R_T^k(x)(1+T^2)^{-1/2} \in\mathcal{L}_{p,\infty}
                    \end{equation*}
            \item{} Every $a\in\Bc$ can be written as $a=a_1a_2$ with $a_1,a_2\in\mathcal{B},$
        \end{enumerate}
        Then, for all $k\geq 0$ and all $x \in \Bc\cup \partial_0(\Bc)$,
        \begin{equation*}
            R^k_T(x)(1+T^2)^{-p/2} \in \Lc_{1,\infty}.
        \end{equation*}
    \end{corollary}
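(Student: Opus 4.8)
The plan is to deduce the corollary from Theorem \ref{sufficient conditions for 1.2.1} by translating freely between the two equivalent ways of recording smoothness, through $R_T$ and through $\delta_0$.

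First I would fix $x \in \Bc \cup \partial_0(\Bc)$. Since $(\Bc,K,T)$ is smooth and $p \geq 1$, Lemma \ref{lambda to delta} applies and shows that hypothesis (i) of the corollary, namely that $R_T^k(x)(1+T^2)^{-1/2} \in \Lc_{p,\infty}$ for all $k \geq 0$, is equivalent to $\delta_0^k(x)(1+T^2)^{-1/2} \in \Lc_{p,\infty}$ for all $k \geq 0$. Letting $x$ range over $\Bc \cup \partial_0(\Bc)$, this is exactly hypothesis (i) of Theorem \ref{sufficient conditions for 1.2.1}, while the factorisation hypothesis (ii) is literally the same in both statements. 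Thus Theorem \ref{sufficient conditions for 1.2.1} gives
\[
\delta_0^k(x)(1+T^2)^{-p/2} \in \Lc_{1,\infty} \qquad \text{for all } k \geq 0 \text{ and } x \in \Bc \cup \partial_0(\Bc).
\]

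It remains to pass from this $\delta_0$-statement back to the asserted $R_T$-statement. Here I would use the elementary identity $|T_0|^2 = 1 + T^2$, which gives $[T^2,y] = |T_0|\delta_0(y) + \delta_0(y)|T_0|$ on $\dom_\infty(T)$ for any bounded $y$ preserving $\dom_\infty(T)$, and hence, after right multiplication by $(1+T^2)^{-1/2} = |T_0|^{-1}$ and one commutation,
\[
R_T(y) = 2\delta_0(y) + \delta_0^2(y)(1+T^2)^{-1/2}.
\]
Since $(1+T^2)^{-1/2}$ commutes with $T^2$ one also has $R_T\bigl(z(1+T^2)^{-m/2}\bigr) = R_T(z)(1+T^2)^{-m/2}$, so iterating the last identity, and using that smoothness makes every $\delta_0^n(x)$ a bounded operator preserving $\dom_\infty(T)$, a routine induction on $k$ yields an identity of bounded operators
\[
R_T^k(x) = \sum_{j=0}^{k} c_{k,j}\, \delta_0^{k+j}(x)(1+T^2)^{-j/2},
\]
with nonnegative integer coefficients $c_{k,j}$, the two sides being compared on the dense core $\dom_\infty(T)$. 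Multiplying on the right by $(1+T^2)^{-p/2}$ and rewriting each summand as $\bigl(\delta_0^{k+j}(x)(1+T^2)^{-p/2}\bigr)(1+T^2)^{-j/2}$, the first factor lies in $\Lc_{1,\infty}$ by the displayed consequence of Theorem \ref{sufficient conditions for 1.2.1} (applied with $k+j$ in place of $k$) and the second is bounded; since $\Lc_{1,\infty}$ is an ideal and the sum is finite, $R_T^k(x)(1+T^2)^{-p/2} \in \Lc_{1,\infty}$, which is the claim.

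I do not expect a serious obstacle: the substantive analytic content is already packaged in Lemma \ref{lambda to delta} and Theorem \ref{sufficient conditions for 1.2.1}. The only point requiring care is the algebraic identity expressing $R_T^k(x)$ as a finite sum of terms $\delta_0^{k+j}(x)(1+T^2)^{-j/2}$; this is a short induction, but one must keep track of domains and verify the identity on the common core $\dom_\infty(T)$, using that $x$, $\partial_0(x)$ and all of their $\delta_0$-iterates preserve $\dom_\infty(T)$ by smoothness.
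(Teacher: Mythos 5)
Your proposal is correct and follows essentially the same route as the paper: convert hypothesis (i) to the $\delta_0$ form via Lemma \ref{lambda to delta}, apply Theorem \ref{sufficient conditions for 1.2.1} to get $\delta_0^k(x)(1+T^2)^{-p/2}\in\Lc_{1,\infty}$, and then use the identity $R_T(x)=2\delta_0(x)+\delta_0^2(x)|T_0|^{-1}$ iterated (the paper makes your coefficients explicit as $\binom{k}{l}2^l$ via the binomial theorem) to write $R_T^k(x)(1+T^2)^{-p/2}$ as a finite sum of $\Lc_{1,\infty}$ terms.
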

    \begin{proof}
        Due to Lemma \ref{lambda to delta}, we have that for all $k\geq 0$ and $x \in \Bc\cup \partial_0(\Bc)$:
        \begin{equation*}
            \delta_0^k(x)(1+T^2)^{-1/2} \in \Lc_{p,\infty}
        \end{equation*}
        Now directly applying Theorem \ref{sufficient conditions for 1.2.1} yields for all $k\geq 0$ that:
        \begin{equation*}
            \delta_0^k(x)(1+T^2)^{-p/2} \in \Lc_{1,\infty}.
        \end{equation*}
        Recall that $|T_0| = (1+T^2)^{1/2}$. We can now express $R^k_T$ in terms of $\delta_0$ as follows. By the Leibniz rule:
        \begin{align*}
                  R_T(x) &= [|T_0|^2,x]|T_0|^{-1}\\
                         &= [|T_0|,x]+|T_0|[|T_0|,x]|T_0|^{-1}\\
                         &= 2\delta_0(x)+\delta^2(x)|T_0|^{-1}.
        \end{align*}
        Then by the binomial theorem,
        \begin{equation*}
            R_T^k(x) = \sum_{l=0}^k \binom{k}{l} 2^l\delta_0^{2k-l}(x)|T_0|^{-k+l}.
        \end{equation*}
        So multiplying on the right by $|T_0|^{-p}$,
        \begin{equation*}
            R_T^k(x)|T_0|^{-p} = \sum_{l=0}^k \binom{k}{l}2^l\delta_0^{2k-l}(x)|T_0|^{-k+l-p}.
        \end{equation*}
        We have proved that each summand is in $\Lc_{1,\infty}$, and so the result follows.
    \end{proof}
    
    By a short argument using Theorem \ref{double of smooth is smooth}, Lemma \ref{strong implies weak dimension} and Theorem \ref{sufficient conditions for 1.2.1}, we get the following:
    \begin{corollary}\label{sufficient conditions for double to satisfy 1.2.1}
        If $(\Ac,H,D)$ is a smoothly $p$-dimensional \pre-spectral triple such that every $a \in \Ac$ can be factorised as $a = a_1a_2$, where $a_1,a_2 \in \Ac$, then the 
        \double $(\Ac\otimes q,H\otimes \Cplx^2,D_2)$ satisfies Hypothesis \ref{main assumption}.
    \end{corollary}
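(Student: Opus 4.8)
\emph{Proof plan.} The plan is to recognise the clone $(\Ac\otimes q,H\otimes\Cplx^2,D_2)$ as a smooth spectral triple meeting the hypotheses of Theorem \ref{sufficient conditions for 1.2.1}, and then quote that theorem. First I would record, via Theorem \ref{double of smooth is smooth}.\eqref{double respects smoothness}, that the clone is a smooth spectral triple, so that the apparatus of Section \ref{sufficient conditions section} applies to $D_2$. Next I would produce the Schatten estimates feeding Theorem \ref{sufficient conditions for 1.2.1}: since $p\in\Ntrl$ we have $p\geq 1$, so Lemma \ref{strong implies weak dimension} turns the $\Lc_{1,\infty}$ conditions in the definition of ``smoothly $p$-dimensional'' into
\[
R^k_{|D|}(x)(1+|D|^2)^{-1/2},\ R^k_{|D^*|}(x)(1+|D^*|^2)^{-1/2}\in\Lc_{p,\infty}\qquad(k\geq 0,\ x\in\Ac\cup\partial(\Ac)),
\]
and then Theorem \ref{double of smooth is smooth}.\eqref{double respects 1.2.1}, with $\mathcal{E}=\Lc_{p,\infty}$, lifts these to $R^k_{D_2}(x\otimes q)(1+D_2^2)^{-1/2}\in\Lc_{p,\infty}$ for all $k\geq 0$ and $x\in\Ac\cup\partial(\Ac)$; by Proposition \ref{double is spectral triple}.\eqref{double respects partials} these are exactly the elements of $(\Ac\otimes q)\cup\partial(\Ac\otimes q)$, where $\partial$ is the commutator with $D_2$.

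The factorisation hypothesis of Theorem \ref{sufficient conditions for 1.2.1} is immediate: $q$ is a projection, so $a=a_1a_2$ with $a_1,a_2\in\Ac$ gives $a\otimes q=(a_1\otimes q)(a_2\otimes q)$ with both factors in $\Ac\otimes q$. What remains is hypothesis (i) of Theorem \ref{sufficient conditions for 1.2.1}, which is phrased in terms of $\delta_0$ and the regularised operator $(D_2)_0$ rather than $R_{D_2}$ and $D_2$; the passage from the $R_{D_2}$-estimates above to the corresponding $\delta_0$-estimates is Lemma \ref{lambda to delta} applied to the clone. Once hypothesis (i) is established, Theorem \ref{sufficient conditions for 1.2.1} (with \cite[Proposition 3.1.4]{SZ-asterisque} used to pass between $(D_2)_0$ and $D_2$) yields Hypothesis \ref{main assumption} for the clone, which is the claim.

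The step I expect to be the real obstacle is reconciling $\partial(\Ac\otimes q)$ (commutators with $D_2$, which is what the cloning results naturally supply) with $\partial_0(\Ac\otimes q)$ (commutators with $(D_2)_0$, which is what Theorem \ref{sufficient conditions for 1.2.1} asks for). To handle this I would use that $(D_2)_0-D_2=U_2\bigl((1+D_2^2)^{1/2}-|D_2|\bigr)$ is a bounded, rapidly decaying perturbation --- equivalently, that $(D_2)_0$ is again of clone shape, being $\begin{pmatrix}0&D_0^*\\ D_0&0\end{pmatrix}$ with $D_0=U(1+D^*D)^{1/2}$ --- so that $\partial_0(a\otimes q)-\partial(a\otimes q)$ and all of its iterated $R_{D_2}$-derivatives, multiplied on the right by $(1+D_2^2)^{-1/2}$, still lie in $\Lc_{p,\infty}$ by the estimates already obtained together with smoothness. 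This is the kind of perturbation bookkeeping carried out in \cite[Section 3.1]{SZ-asterisque}, and is the only place where more than a line of computation is required; everything else is assembly of the cited results.
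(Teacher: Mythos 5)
Your proposal is correct and follows essentially the same route as the paper's proof: clone smoothness via Theorem \ref{double of smooth is smooth}, the $\Lc_{p,\infty}$ estimates via Lemma \ref{strong implies weak dimension} and Theorem \ref{double of smooth is smooth}.\eqref{double respects 1.2.1}, the passage from $\partial$ to $\partial_0$ by observing that $T_0-T$ is a bounded operator commuting with $R_T$, then Lemma \ref{lambda to delta} and Theorem \ref{sufficient conditions for 1.2.1}. You correctly identify the $\partial$ versus $\partial_0$ reconciliation as the only nontrivial step, and your proposed handling of it matches the paper's.
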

    \begin{proof}
        First, by Theorem \ref{double of smooth is smooth}, the \double $(\Ac\otimes q,H\otimes \Cplx^2,D_2)$ is a smooth, $p$-dimensional spectral triple. 
        Let $\Bc = \Ac\otimes q$, $K = H\otimes \Cplx^2$ and $T = D_2$. By \cite[Proposition 3.1.4]{SZ-asterisque}, the spectral triple $(\Bc,K,T_0)$ is smooth, $p$-dimensional
        and satisfies Hypothesis \ref{main assumption} if and only if $(\Bc,K,T)$ does.
        
        Using Lemma \ref{strong implies weak dimension}, we have that for all $k\geq 0$ and $x \in \Ac\cup \partial(\Ac)$:
        \begin{align*}
            R^k_{|D^*|}(x)(1+|D^*|^2)^{-1/2} &\in \Lc_{p,\infty},\\
            R^k_{|D|}(x)(1+|D|^2)^{-1/2} &\in \Lc_{p,\infty}.
        \end{align*}
        Now by Theorem \ref{double of smooth is smooth}.\eqref{double respects 1.2.1}, it follows that
        \begin{align*}
            R^k_{D_2}(x\otimes q)(1+D_2^2)^{-1/2} \in \Lc_{p,\infty}.
        \end{align*}
        Thus for all $x \in \Bc\cup \partial(\Bc)$,
        \begin{equation*}
            R^k_{T}(x)(1+T^2)^{-1/2} \in \Lc_{p,\infty}.
        \end{equation*}
        Suppose that $a \in \Bc$. Then,
        \begin{equation*}
            \partial(a)-\partial_0(a) = [U|T|-U(1+T^2)^{1/2},a].
        \end{equation*}
        We have that:
        \begin{equation*}
            U|T|-U(1+T^2)^{1/2} = -\frac{U}{|T|+(1+T^2)^{1/2}}
        \end{equation*}
        is bounded, and $U|T|-U(1+T^2)^{1/2}$ also commutes with $R_T^k$. Hence,
        \begin{equation*}
            R^k_T(\partial(a))-R^k(\partial_0(a)) = [-\frac{U}{(1+T^2)^{1/2}+|T|},R^k_T(a)(1+T^2)^{-1/2}] \in \Lc_{p,\infty}.
        \end{equation*}
        Thus for all $x \in \Bc\cup\partial_0(\Bc)$, 
        \begin{equation*}
            R^k_T(x)(1+T^2)^{-1/2} \in \Lc_{p,\infty}.
        \end{equation*}
        Applying Lemma \ref{lambda to delta}, we therefore have that for all $x \in \Bc\cup \partial_0(\Bc)$, $$\delta_0(x)(1+T^2)^{-1/2} \in \Lc_{p,\infty}.$$
        
        Thus $(\Bc,K,T_0)$ satisfies all of the assumptions of Theorem \ref{sufficient conditions for 1.2.1} and therefore satisfies Hypothesis \ref{main assumption}.
    \end{proof}
    
\section{The Character Theorem for \pre-spectral triples}\label{character formula section}
  
    Let $(\Ac,H,D)$ be a smoothly $p$-dimensional \pre-spectral triple satisfying the conditions of Corollary \ref{sufficient conditions for double to satisfy 1.2.1}. Then by that corollary,
    the \double $(\Ac\otimes q,H\otimes\Cplx^2,D_2)$ is a smooth spectral triple satisfying Hypothesis \ref{main assumption}, and so by direct appeal to \cite{SZ-asterisque} we could
    simply state the Character formula for the \doublee. However it is desirable to express the Character theorem not in terms of the \double but in terms of the original \pre-spectral triple.

    Motivated by our model example (Section \ref{model examples section}), this can be achieved under following assumption:
    \begin{hypothesis}\label{lotoreichik hypothesis}
        $(\Ac,H,D)$ is a smoothly $p$-dimensional \pre-spectral triple, where $p \geq 1$ is an integer.        
        We also assume that $\Ac = \Ac\cdot\Ac$. That is, every $a \in \Ac$ can be written as a product $a = a_1a_2$ for some $a_1,a_2 \in \Ac$.

        If $p > 1$, assume that for all $0 \leq a \in \Ac$,
        \begin{equation*}
            a(1+DD^*)^{-1}a - a(1+D^*D)^{-1}a \in \Lc_{\frac{p}{4},\infty}.
        \end{equation*}
        
        If $p = 1$, we instead assume that:
        \begin{equation*}
            (1+D^*D)^{-1}-(1+DD^*)^{-1} \in \Lc_{\frac{1}{4},\infty}.
        \end{equation*}        
    \end{hypothesis}
    Due to Corollary \ref{sufficient conditions for double to satisfy 1.2.1}, if $(\Ac,H,D)$ satisfies Hypothesis \ref{lotoreichik hypothesis} then its \double is a spectral triple
    satisfying Hypothesis \ref{main assumption}. 
    \begin{remark}
        In the $p > 1$ case we have made the assumption that $a(1+DD^*)^{-1}a-a(1+D^*D)^{-1}a$ is in $\Lc_{p/4,\infty}$ since this can be verified in our model example (Section \ref{model examples section}).
        However for the Character Formula it is possible to weaken this assumption: we could assume that $a(1+DD^*)^{-1}a-a(1+D^*D)^{-1}a \in \Lc_{\frac{p}{2}-\varepsilon,\infty}$ for some $\varepsilon > 0$.
    \end{remark}
    
    With Hypothesis \ref{lotoreichik hypothesis}, we can show the following proposition. We defer the proof to Appendix \ref{operator difference appendix}. 
    The proof is however quite technical, requiring very recent operator inequalities due to Eric Ricard \cite{Ricard-fractional-powers} and operator 
    integration techniques recently developed in \cite{SZ-asterisque}.    
    \begin{theorem}\label{difference is trace class}
        Let $(\Ac,H,D)$ be a \pre-spectral triple satisfying Hypothesis \ref{lotoreichik hypothesis}, then for all $0 \leq a \in \Ac$ we have:
        \begin{equation*}
            (1+D^*D)^{-p/2}a^p-(1+DD^*)^{-p/2}a^p \in \Lc_1.
        \end{equation*} 
    \end{theorem}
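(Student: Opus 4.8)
The plan is to reduce the statement, via the factorisation $\Ac = \Ac\cdot\Ac$ and the smoothness bookkeeping already established, to a ``sandwiched'' estimate that is fed directly by Hypothesis \ref{lotoreichik hypothesis}. Write $A := (1+D^*D)^{-1}$ and $B := (1+DD^*)^{-1}$, two positive contractions, so that the assertion reads $(A^{p/2} - B^{p/2})a^p \in \Lc_1$ for $0 \le a \in \Ac$. I would first dispose of the case $p = 1$ on its own: there Hypothesis \ref{lotoreichik hypothesis} gives $A - B \in \Lc_{1/4,\infty}$, and a Powers--St{\o}rmer-type inequality for fractional powers of positive operators --- one of Ricard's recent inequalities \cite{Ricard-fractional-powers}, applied with exponent $\tfrac12$ --- upgrades this to $A^{1/2} - B^{1/2} \in \Lc_{1/2,\infty} \subseteq \Lc_1$; multiplying on the right by the bounded operator $a$ closes this case. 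From now on $p \ge 2$.

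For $p \ge 2$ set $m := \lfloor p/2\rfloor$. The resolvent identity gives $A^m - B^m = \sum_{j=0}^{m-1} A^j(A-B)B^{m-1-j}$, and when $p$ is odd I would peel off the remaining half-power as $A^{p/2} - B^{p/2} = A^m(A^{1/2}-B^{1/2}) + (A^m - B^m)B^{1/2}$, handling the $A^{1/2}-B^{1/2}$ factor again through Ricard's fractional-power inequalities or through a resolvent-integral representation. The crucial manoeuvre is that in each summand $A^j(A-B)B^{m-1-j}a^p$ there are exactly $p$ factors of $a$ available: refactoring $a^p$ with $\Ac = \Ac\cdot\Ac$ and commuting factors of $a$ through the resolvents, two of them are brought to flank $(A-B)$ (so that Hypothesis \ref{lotoreichik hypothesis} applies: $a(A-B)a \in \Lc_{p/4,\infty}$), while the remaining $p-2$ are distributed, $2j$ among the half-integer powers making up $A^j$ and $2(m-1-j)$ among those making up $B^{m-1-j}$. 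Each such commutation spawns a correction term in which a factor of $a$ is replaced by $R_{|D|}(\,\cdot\,)$ or $R_{|D^*|}(\,\cdot\,)$; by the smoothly-$p$-dimensional hypothesis --- via Lemma \ref{strong implies weak dimension} and the $R^k$-calculus of Theorem \ref{double of smooth is smooth} --- these satisfy the same Schatten estimates and so are absorbed.

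What remains is the Schatten arithmetic. From $p$-dimensionality and Lemma \ref{strong implies weak dimension} one has $A^{1/2}a,\, aA^{1/2} \in \Lc_{p,\infty}$, and likewise with $B$; hence a fully threaded summand lands in a product $\Lc_{p/(2j),\infty}\cdot\Lc_{p/4,\infty}\cdot\Lc_{p/(2(m-1-j)),\infty}$, which by H\"older for the weak Schatten scale sits in $\Lc_{r,\infty}$ with $\tfrac1r = \tfrac{2j}{p}+\tfrac4p+\tfrac{2(m-1-j)}{p} = \tfrac{2m+2}{p} > 1$, i.e. inside $\Lc_1$. The boundary indices $j = 0$ and $j = m-1$ (where one flanking $a$ must first be commuted across the whole of $A^j$, resp. $B^{m-1-j}$) and the correction terms are no worse, and summing over $j$ gives the claim. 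This is precisely where the exponent $p/4$ in Hypothesis \ref{lotoreichik hypothesis} and the power $a^p$ in the statement are used: the two $a$'s flanking $(A-B)$ contribute, through the hypothesis, four orders of decay, the other $p-2$ factors of $a$ supply the decay missing from the otherwise non-compact resolvent powers, and the totals combine to the $p+2$ orders needed to fall inside $\Lc_1$.

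The hard part is rigour, not strategy. The resolvent-identity and fractional-power manipulations take place at the level of unbounded operators on domains, and --- decisively --- the intermediate objects live in quasi-Banach ideals $\Lc_{q,\infty}$ (and, in the $p=1$ analysis, in $\Lc_{1,\infty}$), which carry no Banach norm and no Bochner integration theory; the operator integrals must therefore be controlled directly, using the operator-integration techniques of \cite{SZ-asterisque}, the monotonicity \eqref{monotonicity} of the $\Lc_{1,\infty}$-quasi-norm under logarithmic submajorisation, and Ricard's fractional-power inequalities \cite{Ricard-fractional-powers}. Equally delicate is the verification that every commutator correction produced while threading the factors of $a$ through the resolvents can indeed be absorbed. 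For these reasons the full argument is deferred to Appendix \ref{operator difference appendix}.
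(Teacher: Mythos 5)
Your proposal has a genuine gap, and it sits exactly at the step you describe as ``these are absorbed''. For $p>1$, Hypothesis \ref{lotoreichik hypothesis} controls only the \emph{sandwiched} difference $a\bigl((1+DD^*)^{-1}-(1+D^*D)^{-1}\bigr)a \in \Lc_{p/4,\infty}$; the unsandwiched difference $A-B$ (in your notation) is merely bounded, and nothing in the hypotheses makes it compact. Your plan telescopes the unsandwiched powers, $A^m-B^m=\sum_j A^j(A-B)B^{m-1-j}$, and then commutes factors of $a$ from the far right into flanking position around $(A-B)$. But every such commutation produces a correction term of the form $A^j(A-B)[B^{m-1-j},a]a^{p-1}$ in which the factor adjacent to $(A-B)$ is no longer an element of $\Ac$ but a commutator $[B^{m-1-j},a]=-\sum_i B^{i+1}[DD^*,a]B^{m-1-j-i}$, i.e.\ it begins with a resolvent. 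The hypothesis is then inapplicable, $(A-B)$ contributes no Schatten decay at all, and a count of the remaining resolvent powers against the available factors of $a$ (each unit of resolvent power costs adjacent elements of $\Ac$ to yield $\Lc_{p/2,\infty}$, and $[B,a]$ only contributes $\Lc_{p/2,\infty}$ where the sandwich contributed $\Lc_{p/4,\infty}$) leaves these corrections short of $\Lc_1$ once $p>3$. The same problem kills your treatment of odd $p$: you propose to control $A^{1/2}-B^{1/2}$ ``again through Ricard's inequalities'', but Ricard's theorem needs $A-B$ itself in some $\Lc_s$, which is only assumed for $p=1$.

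The paper's proof is organised precisely to avoid ever meeting the unsandwiched difference. It splits $(1+D^*D)^{-p/2}a^p-(1+DD^*)^{-p/2}a^p$ into (a) $(aAa)^{p/2}-(aBa)^{p/2}$, which is telescoped with the sandwich intact --- each summand contains the factor $(aAa)^{1/2}-(aBa)^{1/2}\in\Lc_{2p/3,\infty}$, obtained from the hypothesis by Ricard's theorem --- and (b) two terms of the form $B^{p/2}(a^2)^{p/2}-(aBa)^{p/2}$, one for each resolvent separately, where no cancellation between $D^*D$ and $DD^*$ is needed and the comparison is carried out by the operator-integral machinery of Theorem \ref{operator difference results} (the representation \eqref{integral representation} from \cite{SZ-asterisque} together with the Lipschitz estimate \eqref{lipschitz bound}). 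Your $p=1$ case and your $p=2$ case (where $m=1$ and the single correction $[A,a]a-[B,a]a$ happens to split into two separately trace-class pieces) do agree with the paper; but for $p>2$ the order of operations matters, and ``telescope first, sandwich afterwards'' cannot be repaired without an assumption on $A-B$ alone that the hypothesis does not provide.
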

        
    The following proposition shows how Theorem \ref{difference is trace class} can be used to express the Character Formula of the \double of a \pre-spectral triple
    in terms of the original \pre-spectral triple. Recall that $D_2 = \begin{pmatrix} 0 & D^*\\ D & 0 \end{pmatrix}$.
    \begin{proposition}\label{character_formula_lhs}
        Let $(\Ac,H,D)$ satisfy Hypothesis \ref{lotoreichik hypothesis} with grading $\gamma$ in the even case.
        Let $a_0\otimes \cdots \otimes a_p \in \Ac^{\otimes (p+1)}$ be such that there exists $0 \leq \phi \in \Ac$ such that $\phi a_0 = a_0$. Let $\varphi$ be a trace on $\Lc_{1,\infty}$. Then:
        \begin{equation*}
            \varphi((\gamma\otimes 1) (a_0\otimes q)\prod_{j=1}^p [D_2,a_j\otimes q](1+D_2^2)^{-p/2}) = \varphi(\gamma a_0\prod_{j=1}^p \partial(a_j)(1+D^*D)^{-p/2}).
        \end{equation*}
        {\highlight In the odd case, the same formula holds with $\gamma = 1$.}
    \end{proposition}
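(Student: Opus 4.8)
The plan is to unwind the left-hand side with Proposition~\ref{double is spectral triple}, reduce it by a block-trace computation to the comparison of the two ``local terms'' built from $(1+D^*D)^{-p/2}$ and $(1+DD^*)^{-p/2}$, and then annihilate their difference by combining Theorem~\ref{difference is trace class} with the local unit $\phi$.

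First I would unwind the \doublee. By Proposition~\ref{double is spectral triple}, $[D_2,a_j\otimes q]$ has bounded extension $\partial(a_j)\otimes q$, and $D_2^2=\diag(D^*D,DD^*)$, so $(1+D_2^2)^{-p/2}=\diag\big((1+D^*D)^{-p/2},(1+DD^*)^{-p/2}\big)$. Since $q$ is a projection, $q^k=q$ for $k\geq1$, hence $(a_0\otimes q)\prod_{j=1}^p[D_2,a_j\otimes q]=b\otimes q$ with $b:=a_0\prod_{j=1}^p\partial(a_j)$. Writing things as $2\times2$ matrices over $H$, the operator inside $\varphi$ on the left equals
\begin{equation*}
(\gamma\otimes1)(b\otimes q)(1+D_2^2)^{-p/2}=\frac{1}{2}\begin{pmatrix}\gamma b(1+D^*D)^{-p/2}&\gamma b(1+DD^*)^{-p/2}\\\gamma b(1+D^*D)^{-p/2}&\gamma b(1+DD^*)^{-p/2}\end{pmatrix},
\end{equation*}
and every entry lies in $\Lc_{1,\infty}$: peel one resolvent onto the rightmost factor $\partial(a_p)$ and use that $(\Ac,H,D)$ is $p$-dimensional. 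Then, with $P=1\oplus0$ and $Q=0\oplus1$ on $H\otimes\Cplx^2$, cyclicity of $\varphi$ gives $\varphi(PXQ)=\varphi(QPX)=0$ and likewise $\varphi(QXP)=0$, so $\varphi$ only sees the diagonal blocks; and since $S\oplus0$ is unitarily equivalent to $0\oplus S$, applying $\varphi$ to the matrix above yields $\frac{1}{2}\varphi(\gamma b(1+D^*D)^{-p/2})+\frac{1}{2}\varphi(\gamma b(1+DD^*)^{-p/2})$, where $\varphi$ is now the trace it induces on $\Lc_{1,\infty}(H)$. Since the right-hand side of the proposition is $\varphi(\gamma b(1+D^*D)^{-p/2})$, it remains to prove that $\varphi\big(\gamma b\,[(1+DD^*)^{-p/2}-(1+D^*D)^{-p/2}]\big)=0$.

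I would deduce this from the claim that $[(1+DD^*)^{-p/2}-(1+D^*D)^{-p/2}]\,a_0\in\Lc_1$. Granting the claim: writing $\gamma b\,[(1+DD^*)^{-p/2}-(1+D^*D)^{-p/2}]=\gamma a_0\partial(a_1)\cdots\partial(a_{p-1})\cdot\big(\partial(a_p)[(1+DD^*)^{-p/2}-(1+D^*D)^{-p/2}]\big)$, the parenthesised factor lies in $\Lc_{1,\infty}$, so cyclicity of $\varphi$ moves the bounded prefix to the right and, using $\gamma a_0=a_0\gamma$, produces $\partial(a_p)\cdot\big([(1+DD^*)^{-p/2}-(1+D^*D)^{-p/2}]\,a_0\big)\cdot\gamma\partial(a_1)\cdots\partial(a_{p-1})\in\Lc_1$; as every trace on $\Lc_{1,\infty}$ vanishes on $\Lc_1$, this is $0$. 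To prove the claim, take adjoints (the two resolvents are self-adjoint) and show instead that $a_0^*[(1+DD^*)^{-p/2}-(1+D^*D)^{-p/2}]\in\Lc_1$. Now $\phi a_0=a_0$ forces $a_0^*\phi=a_0^*$, hence $a_0^*\phi^p=a_0^*$; and Theorem~\ref{difference is trace class} applied to $\phi$, after taking adjoints, gives $\phi^p[(1+DD^*)^{-p/2}-(1+D^*D)^{-p/2}]\in\Lc_1$. Therefore $a_0^*[(1+DD^*)^{-p/2}-(1+D^*D)^{-p/2}]=a_0^*\phi^p[(1+DD^*)^{-p/2}-(1+D^*D)^{-p/2}]\in\Lc_1$. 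The odd case is the special case $\gamma=1$.

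The only substantive input here is Theorem~\ref{difference is trace class} itself, whose proof is deferred to the appendix; everything around it is formal. The step I expect to require the most care is the bookkeeping of ideal memberships in the cyclicity arguments — one must check at each stage that the relevant product genuinely lies in $\Lc_{1,\infty}$ before invoking $\varphi(ST)=\varphi(TS)$ — together with the identification of $\varphi$ on $\Lc_{1,\infty}(H)$ with $\varphi$ on $\Lc_{1,\infty}(H\otimes\Cplx^2)$. The grading $\gamma$ enters only through commuting with $\Ac$ and being bounded, so no sign tracking is needed.
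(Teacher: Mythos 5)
Your proposal is correct and follows essentially the same route as the paper: unwind the clone using Proposition~\ref{double is spectral triple}, reduce the block trace to the half--half average of the $(1+D^*D)^{-p/2}$ and $(1+DD^*)^{-p/2}$ terms, and kill the difference by combining Theorem~\ref{difference is trace class} with the local unit $\phi$ and the vanishing of traces on $\Lc_1$. The only cosmetic divergence is that the paper inserts $\phi^p$ on the left of $a_0$ and cycles it to the far right before applying Theorem~\ref{difference is trace class}, while you first reduce to showing $\varphi\bigl(\gamma b\,[(1+DD^*)^{-p/2}-(1+D^*D)^{-p/2}]\bigr)=0$ and then use $a_0^*\phi^p=a_0^*$ together with the adjoint of Theorem~\ref{difference is trace class}; these are equivalent rearrangements of the same cyclicity argument.
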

    \begin{proof}
        For all $j = 1,\ldots p$, by Lemma \ref{double is spectral triple}.\eqref{double respects partials} we have that $[D_2,a_j\otimes q] = \partial(a_j)\otimes q$. Thus,
        \begin{align*}
            a_0\otimes q\prod_{k=1}^p [D_2,a_k\otimes q] &= (a_0\otimes q)(\prod_{k=1}^p(\partial(a_j)\otimes q))\\
                                                         &= (a_0\prod_{k=1}^p \partial(a_j))\otimes q^{p+1}.
        \end{align*}
        But $q$ is a projection, so the left hand side in the statement of the lemma simplifies to:
        \begin{equation*}
            \varphi((\gamma a_0\prod_{j=1}^p \partial(a_j))\otimes q(1+D_2^2)^{-p/2})
        \end{equation*}
        and this is:
        \begin{equation*}
             \frac{1}{2}(\varphi(\gamma a_0\prod_{j=1}^p \partial(a_j)(1+D^*D)^{-p/2})+\varphi(\gamma a_0\prod_{j=1}^p \partial(a_j)(1+DD^*)^{-p/2})).
        \end{equation*}
        By assumption there exists $0 \leq \phi \in \Ac$ so that $\phi a_0 = a_0$. Thus we may insert $p$ copies of $\phi$ to get:
        \begin{equation*}
            \frac{1}{2}(\varphi(\gamma \phi^p a_0\prod_{j=1}^p \partial(a_j)(1+D^*D)^{-p/2})+\varphi(\gamma \phi^pa_0\prod_{j=1}^p \partial(a_j)(1+DD^*)^{-p/2}))
        \end{equation*}
        Since $\gamma$ commutes with $\phi^p$, and using the cyclic property of the trace, we have:
        \begin{equation*}
            \frac{1}{2}(\varphi(\gamma a_0\prod_{j=1}^p \partial(a_j)(1+D^*D)^{-p/2}\phi^p)+\varphi(\gamma a_0\prod_{j=1}^p \partial(a_j)(1+DD^*)^{-p/2}\phi^p))
        \end{equation*}
        Now applying Theorem \ref{difference is trace class},
        \begin{equation*}
            (1+D^*D)^{-p/2}\phi^p-(1+DD^*)^{-p/2}\phi^p \in \Lc_1
        \end{equation*}        
        and since $\varphi$ vanishes on $\Lc_1$,
        \begin{equation*}
            \varphi(\gamma a_0\prod_{j=1}^p \partial(a_j)(1+DD^*)^{-p/2}\phi^p) = \varphi(\gamma a_0\prod_{j=1}^p \partial(a_j)(1+D^*D)^{-p/2}\phi^p).
        \end{equation*}
        Removing the factors of $\phi^p$,
        \begin{equation*}
            \varphi(\gamma a_0\prod_{j=1}^p \partial(a_j)(1+DD^*)^{-p/2}) = \varphi(\gamma a_0\prod_{j=1}^p \partial(a_j)(1+D^*D)^{-p/2}).
        \end{equation*}
        Thus,
        \begin{equation*}
            \varphi((\gamma\otimes 1) (a_0\otimes q)\prod_{j=1}^p [D_2,a_j\otimes q](1+D_2^2)^{-p/2}) = \varphi(\gamma a_0\prod_{j=1}^p \partial(a_j)(1+D^*D)^{-p/2}).
        \end{equation*}
    \end{proof}
       
    The Chern character of a spectral triple is defined in terms of a representative of the $K$-homology class defined by $(\Ac\otimes q, H\otimes \Cplx^2,D_2)$. There are several ways
    to select such a representative. Here, we follow the approach of \cite{SZ-asterisque}. Note that for a densely defined closed operator $D$, it is possible
    to define a polar decomposition $D = F|D|$, where the operator $F$ -- a partial isometry mapping $\ker(D)^\perp$ to $\ker(D^*)^{\perp}$ -- is called the phase of $D$ (see \cite[Section 7.1]{Schmudgen-2012}).
    \begin{definition}
        Let $P_{\ker(D)}$ be the projection onto $\ker(D)$, and let $P_{\ker(D^*)}$ be the projection onto $\ker(D^*)$. 
        Let $F$ be the phase of $D$. That is, a partial isometry with $D = F|D|$, initial space $\ker(D)^\perp$ and final space $\ker(D^*)^{\perp}$.
        
        Then, define $\widetilde{F}$ to be the matrix:
        \begin{equation*}
            \widetilde{F} := \begin{pmatrix} 0 & F^* & P_{\ker(D)} & 0\\
                                             F & 0 & 0 & P_{\ker(D^*)}\\
                                             P_{\ker(D)} & 0 & 0 & -F^*\\
                                             0 & P_{\ker(D^*)} & -F & 0
                             \end{pmatrix}.
        \end{equation*}
        We also define a representation $\pi$ of $\Ac$ on the Hilbert space $H\otimes \Cplx^4$ by:
        \begin{equation*}
            \pi(a) = \frac{1}{2}\begin{pmatrix} 
                                    a & a & 0 & 0\\
                                    a & a & 0 & 0\\
                                    0 & 0 & 0 & 0\\
                                    0 & 0 & 0 & 0
                                \end{pmatrix}.
        \end{equation*}
        {\highlight If the spectral triple is even with grading $\gamma$, we also provide a grading for the clone. } Let $\Gamma$ be the grading on $H\otimes \Cplx^4$ given by:
        \begin{equation*}
            \Gamma = \begin{pmatrix}
                         \gamma & 0 & 0 & 0\\
                         0 & \gamma & 0 & 0\\
                         0 & 0 & -\gamma & 0\\
                         0 & 0 & 0 & -\gamma
                     \end{pmatrix}.
        \end{equation*}
    \end{definition}
    
    The definition of $\widetilde{F}$ is chosen so that $\widetilde{F}^2=1$ and we have the following:
    \begin{lemma}
        Let $P_{\ker(D_2)}$ be the projection onto the kernel of the self-adjoint operator $D_2$, and define $F_{D_2} := \chi_{(0,\infty)}(D_2)-\chi_{(-\infty,0)}(D_2)$ We have that:
        \begin{equation*}
            \widetilde{F} = \begin{pmatrix} F_{D_2} & P_{\ker(D_2)}\\ P_{\ker(D_2)} & -F_{D_2} \end{pmatrix}.
        \end{equation*}
        In particular $\widetilde{F}^2=1$.
    \end{lemma}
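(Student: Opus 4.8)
The plan is to reduce everything to the Borel functional calculus for the self-adjoint operator $D_2$, together with the polar-decomposition identity recorded in Remark~\ref{pre_spectral_triple_remark}.\eqref{commutating_remark}. First I would compute the kernel and the modulus of $D_2$. Since $D_2\begin{pmatrix}\xi\\\eta\end{pmatrix}=\begin{pmatrix}D^*\eta\\D\xi\end{pmatrix}$, we have $\ker(D_2)=\ker(D)\oplus\ker(D^*)$, hence
\begin{equation*}
    P_{\ker(D_2)}=\begin{pmatrix} P_{\ker(D)} & 0\\ 0 & P_{\ker(D^*)}\end{pmatrix}.
\end{equation*}
Moreover $D_2^2=\diag(D^*D,DD^*)$, as already noted in the proof of Theorem~\ref{double of smooth is smooth}, so $|D_2|=\diag(|D|,|D^*|)$. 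Finally, since $D_2$ is self-adjoint, $F_{D_2}=\chi_{(0,\infty)}(D_2)-\chi_{(-\infty,0)}(D_2)=\sgn(D_2)$ is exactly the phase appearing in the polar decomposition $D_2=F_{D_2}|D_2|$: it is the unique partial isometry $V$ with $D_2=V|D_2|$ and $\ker V=\ker|D_2|=\ker(D_2)$.

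The only step requiring an actual (if short) argument is the explicit identification of this phase: I claim that $F_{D_2}=\begin{pmatrix}0 & F^*\\ F & 0\end{pmatrix}$, where $F$ is the phase of $D$. By uniqueness of the polar decomposition it suffices to check that this matrix is a partial isometry with kernel $\ker(D_2)$ and that its product with $|D_2|=\diag(|D|,|D^*|)$ equals $D_2$. The kernel statement is immediate, since $\ker F=\ker D$ and $\ker F^*=\ker D^*$. For the product: $F|D|=D$ by definition of $F$, while Remark~\ref{pre_spectral_triple_remark}.\eqref{commutating_remark} gives $F|D|=|D^*|F$, whence $D=|D^*|F$ and, on taking adjoints (this is just the polar decomposition of $D^*$), $F^*|D^*|=D^*$. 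Therefore
\begin{equation*}
    \begin{pmatrix} 0 & F^*\\ F & 0\end{pmatrix}\begin{pmatrix} |D| & 0\\ 0 & |D^*|\end{pmatrix}=\begin{pmatrix} 0 & F^*|D^*|\\ F|D| & 0\end{pmatrix}=\begin{pmatrix} 0 & D^*\\ D & 0\end{pmatrix}=D_2,
\end{equation*}
as required, so $F_{D_2}=\begin{pmatrix}0 & F^*\\ F & 0\end{pmatrix}$.

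Finally I would assemble the pieces. Viewing $H\otimes\Cplx^4$ as $(H\otimes\Cplx^2)\oplus(H\otimes\Cplx^2)$ and substituting the block forms of $F_{D_2}$ and $P_{\ker(D_2)}$ found above into $\begin{pmatrix} F_{D_2} & P_{\ker(D_2)}\\ P_{\ker(D_2)} & -F_{D_2}\end{pmatrix}$, then expanding each $2\times 2$ block into operators on $H$, recovers precisely the $4\times 4$ matrix defining $\widetilde F$. For the last assertion $\widetilde F^2=1$: the operator $P_{\ker(D_2)}$ is a projection and $F_{D_2}^2=\sgn(D_2)^2=\chi_{\Rl\setminus\{0\}}(D_2)=1-P_{\ker(D_2)}$, so the diagonal blocks of $\widetilde F^2$ are $F_{D_2}^2+P_{\ker(D_2)}^2=1$; and since $F_{D_2}$ and $P_{\ker(D_2)}$ are both Borel functions of $D_2$ they commute, so the off-diagonal blocks $\pm\bigl(F_{D_2}P_{\ker(D_2)}-P_{\ker(D_2)}F_{D_2}\bigr)$ vanish. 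I do not anticipate any genuine obstacle; the argument is bookkeeping with the functional calculus, the one delicate point being the identification of the phase of $D_2$ in the middle step, and even that is a one-line consequence of an identity already established in the paper.
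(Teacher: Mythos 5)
Your proof is correct and follows essentially the same route as the paper's: identify $P_{\ker(D_2)}$ and $F_{D_2}$ in block form via $|D_2|=|D|\oplus|D^*|$ and the polar decomposition, then check $\widetilde F^2=1$ from $F_{D_2}^2+P_{\ker(D_2)}=1$. The only difference is that the paper simply cites Schm\"udgen for the fact that $F^*$ is the phase of $D^*$, whereas you re-derive it from Remark~\ref{pre_spectral_triple_remark}.\eqref{commutating_remark} and uniqueness of the polar decomposition, which is a harmless (and arguably more self-contained) elaboration of the same idea.
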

    \begin{proof}
        The proof relies on the fact that if $F$ is the phase of $D$, then $F^*$ is a phase of $D^*$ (see e.g. \cite[Section 7.1]{Schmudgen-2012}). Then,
        \begin{equation*}
            F_{D_2} = \begin{pmatrix}0 & F^*\\ F & 0\end{pmatrix}.
        \end{equation*}
        Since $|D_2| = |D|\oplus |D^*|$, we also have:
        \begin{equation*}
            P_{\ker(D_2)} = \begin{pmatrix} P_{\ker(D)} & 0 \\ 0 & P_{\ker(D^*)}\end{pmatrix}.
        \end{equation*}
        The verification that $\widetilde{F}^2 = 1$ follows from the fact that $F_{D_2}^2 + P_{\ker(D_2)} = 1$.
    \end{proof}
        
    Adopting the terminology of \cite{SZ-asterisque}, we say that a Hochschild cycle $c \in \Ac^{\otimes (p+1)}$ is local if there exists $0 \leq a \in \Ac$ such that $ac = c$, where $a$
    acts on the first tensor component.
    Then we have, by a direct application of the Character Theorem \cite[Theorem 1.2.4]{SZ-asterisque} and Proposition \ref{character_formula_lhs}, the following version of the Character Theorem for \pre-spectral triples:
    \begin{theorem}\label{character theorem}
        Let $(\Ac,H,D)$ be a \pre-spectral triple {\highlight with grading $\gamma$ in the even case} satisfying Hypothesis \ref{lotoreichik hypothesis}. Let $c \in \Ac^{\otimes (p+1)}$ be a local Hochschild cycle, then for any normalised trace $\varphi$ on $\Lc_{1,\infty}$:
        \begin{equation*}
            \varphi(\Omega(c)(1+D^*D)^{-p/2}) = \Ch(c)
        \end{equation*}
        where,
        \begin{align*}
            \Omega(a_0\otimes\cdots \otimes a_p) &= \gamma a_0\prod_{k=1}^p \partial(a_k),\\
                \Ch(a_0\otimes\cdots\otimes a_p) &= \frac{1}{2}\Tr(\Gamma \widetilde{F}\prod_{k=0}^p [\widetilde{F},\pi(a_k)])
        \end{align*}
	{\highlight In the odd case, we have the same formula but with $\gamma=1$.}
    \end{theorem}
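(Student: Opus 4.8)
The plan is to derive the statement directly from the Character Theorem for genuine spectral triples, \cite[Theorem 1.2.4]{SZ-asterisque}, applied to the clone $(\Ac\otimes q, H\otimes\Cplx^2, D_2)$. By Corollary \ref{sufficient conditions for double to satisfy 1.2.1}, Hypothesis \ref{lotoreichik hypothesis} guarantees that this clone is a smooth spectral triple satisfying Hypothesis \ref{main assumption}, so \cite[Theorem 1.2.4]{SZ-asterisque} applies to it. Three things then remain: (a) transporting the local Hochschild cycle $c$ along the $*$-homomorphism $a\mapsto a\otimes q$ so that \cite[Theorem 1.2.4]{SZ-asterisque} yields an identity for the clone; (b) identifying the ``Dixmier trace'' side of that identity with $\varphi(\Omega(c)(1+D^*D)^{-p/2})$; and (c) identifying the ``Chern character'' side with $\Ch(c)$.

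For (a) I would observe that, since $q=q^*=q^2$, the map $a\mapsto a\otimes q$ is a $*$-homomorphism, hence sends the Hochschild cycle $c=\sum_i a_0^{(i)}\otimes\cdots\otimes a_p^{(i)}$ to the Hochschild cycle $\sum_i (a_0^{(i)}\otimes q)\otimes\cdots\otimes (a_p^{(i)}\otimes q)$, and that a locality witness $0\le\phi\in\Ac$ for $c$ produces the locality witness $0\le\phi\otimes q$ for the image. For (b), the ``Dixmier trace'' side of \cite[Theorem 1.2.4]{SZ-asterisque} for the clone, evaluated on this image cycle, is a linear combination of terms $\varphi((\gamma\otimes 1)(a_0\otimes q)\prod_{j=1}^p [D_2,a_j\otimes q](1+D_2^2)^{-p/2})$ (with $\gamma=1$ in the odd case); Proposition \ref{character_formula_lhs} rewrites each such term as $\varphi(\gamma a_0\prod_{j=1}^p \partial(a_j)(1+D^*D)^{-p/2})$, and summing produces exactly $\varphi(\Omega(c)(1+D^*D)^{-p/2})$. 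A minor bookkeeping point here is that \cite[Theorem 1.2.4]{SZ-asterisque} is phrased in terms of the invertible operator $(D_2)_0=U_{D_2}(1+D_2^2)^{1/2}$ rather than $D_2$ itself; the resulting discrepancies between $\partial_0$ and $\partial$ and between $(1+(D_2)_0^2)^{-p/2}$ and $(1+D_2^2)^{-p/2}$ are $\varphi$-negligible, by the same argument as in the proof of Corollary \ref{sufficient conditions for double to satisfy 1.2.1} together with \cite[Proposition 3.1.4]{SZ-asterisque}.

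For (c), the ``Chern character'' side of \cite[Theorem 1.2.4]{SZ-asterisque} is built from the $K$-homology representative attached to the self-adjoint operator $D_2$, and the key point is that the data $\widetilde{F},\pi,\Gamma$ introduced above \emph{are} this representative for $D_2$: by the preceding lemma, $\widetilde{F}$ is the self-adjoint unitary obtained from the partial symmetry $F_{D_2}=\chi_{(0,\infty)}(D_2)-\chi_{(-\infty,0)}(D_2)$ and the kernel projection $P_{\ker(D_2)}$ by the standard $2\times 2$ block construction (which does not alter the $K$-homology class, as it merely adjoins a degenerate module), while $\pi$ carries $a\mapsto a\otimes q$ onto the first summand of $H\otimes\Cplx^4$ and $\Gamma$ is $(\gamma\otimes 1)\oplus(-(\gamma\otimes 1))$ (respectively $1\oplus(-1)$ in the odd case). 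Substituting into the Chern-character formula and comparing with the definition of $\Ch$ gives $\frac12\Tr(\Gamma\widetilde{F}\prod_{k=0}^p[\widetilde{F},\pi(a_k)])=\Ch(c)$, and combining with (a) and (b) yields $\varphi(\Omega(c)(1+D^*D)^{-p/2})=\Ch(c)$. I expect (c) to be the step requiring the most care: it amounts to checking that the explicit matrices $\widetilde{F}$, $\pi$ and $\Gamma$ agree with the representative prescribed in \cite{SZ-asterisque} for $D_2$, with the correct treatment of $\ker(D_2)=\ker(D)\oplus\ker(D^*)$; the rest is a mechanical assembly of Corollary \ref{sufficient conditions for double to satisfy 1.2.1}, Proposition \ref{character_formula_lhs} and \cite[Theorem 1.2.4]{SZ-asterisque}.
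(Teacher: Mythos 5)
Your proposal is correct and is essentially the argument the paper intends: apply Corollary \ref{sufficient conditions for double to satisfy 1.2.1} to see that the clone satisfies Hypothesis \ref{main assumption}, invoke \cite[Theorem 1.2.4]{SZ-asterisque} for the clone, translate the trace side back to $(\Ac,H,D)$ via Proposition \ref{character_formula_lhs}, and observe that the data $\widetilde{F},\pi,\Gamma$ defined just before the theorem is exactly the $K$-homology representative that \cite{SZ-asterisque} attaches to $D_2$ (the $2\times 2$ block construction applied to $F_{D_2}$ and $P_{\ker(D_2)}$). Your observations about pushing the local Hochschild cycle forward along $a\mapsto a\otimes q$ (with $\phi\otimes q$ serving as the new positivity/locality witness) and about the $T_0$ versus $T$ bookkeeping being $\varphi$-negligible are both correct and fill in details the paper leaves implicit.
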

       
    \begin{remark}
        The form of the left hand side in Theorem \ref{character theorem} is exactly as in the usual self-adjoint character theorem, albeit here we have $D^*D$ instead of $D^2$. 
        By Theorem \ref{difference is trace class}, we could just as well have used $DD^*$, since the trace $\varphi$ vanishes on $\Lc_1$. 
    \end{remark}
    
\section{The model example of the Euclidean Dirac operator}\label{model examples section}
    To demonstrate the applicability of our assumptions, we study a ``model example" of a symmetric non-self-adjoint operator. 
    Let $\Omega$ be an arbitrary open subset of $\Rl^d$, with $d > 1$. 
    
    Let $C^\infty_c(\Omega)$ be the algebra of smooth functions with compact support in $\Omega$, we will take $D$ to be the Dirac
    operator on $\Omega$ with Dirichlet boundary conditions (see Subsection \ref{model example definitions}). We then claim that:
    \begin{equation*}
        (C^\infty_c(\Omega),L_2(\Omega,\Cplx^N),D)
    \end{equation*}
    is a smoothly $d$-dimensional \pre-spectral triple satisfying Hypothesis \ref{lotoreichik hypothesis}, where $N = 2^{\lfloor d/2\rfloor}$ and $C^\infty_c(\Omega)$
    acts on $L_2(\Omega,\Cplx^N)$ by pointwise multiplication. 
%
%
\subsection{Function spaces and distributions}\label{sobolev spaces section}    
    The following material concerning Sobolev spaces on domains is well known and may be found in references such as \cite{Adams-Fournier-sobolev-2003, Brezis-fa-2010, Burenkov-sobolev-spaces-1998,shubin}.
    
    Recall that $\Omega$ is an open subset of $\Rl^d$. Let $\alpha \in \Ntrl^d$ be a multi-index, and let $\partial^{\alpha}$ be defined
    on $C^\infty_c(\Omega)$ by:
    \begin{equation*}
        \partial^{\alpha} = \partial_{x_1}^{\alpha_1}\partial_{x_2}^{\alpha_2}\cdots \partial_{x_d}^{\alpha_d}.
    \end{equation*}
    where $\partial_{x_1}$, etc. are the partial derivatives in the coordinate variables.
    The distributional derivative of $u \in L_{1,\loc}(\Omega)$, denoted $\partial^\alpha u$, is defined by:
    \begin{equation*}
        (\partial^{\alpha} u,\phi) = (-1)^{|\alpha|}(u,\partial^\alpha \phi),\quad \text{ for all } \phi \in C^\infty_c(\Omega)
    \end{equation*}
    where $(\eta,\zeta)$ denotes the distributional pairing $\int_{\Rl^d} \eta(t)\zeta(t)\,dt$.
    
    Now fix $N \geq 1$. The space $C^\infty_c(\Omega,\Cplx^N)$ is equipped with a canonical locally convex topology (see e.g. \cite[Section 1.56]{Adams-Fournier-sobolev-2003}), and the space $\Dc'(\Omega,\Cplx^N)$ is defined to be the topological dual of $C^\infty_c(\Omega,\Cplx^N)$, with the weak$^*$-topology.
    
    The Sobolev space $W^{k,p}(\Omega,\Cplx^N)$ is defined to be the space of all $u \in L_p(\Omega,\Cplx^N)$ such that for all multi-indices $\alpha$ with $|\alpha| \leq k$
    the distributional derivative $\partial^{\alpha}u$ is in $L_p(\Omega,\Cplx^N)$, with Sobolev norm defined by:
    \begin{equation*}
         \|u\|_{W^{k,p}(\Omega,\Cplx^N)} := \left(\sum_{|\alpha| \leq k} \|\partial^{\alpha} u\|_{L_p(\Omega,\Cplx^N)}^p\right)^{1/p}.
    \end{equation*}
    
    The space $W^{k,p}_0(\Omega,\Cplx^N)$ is defined to be the closure of $C^\infty_c(\Omega,\Cplx^N)$ in the $W^{k,p}$-norm.
    
    We also consider the local Sobolev space $W^{k,p}_{\loc}(\Omega,\Cplx^N)$, of elements $u \in L_{p,\loc}(\Omega,\Cplx^N)$ such that for all $|\alpha|\leq k$ we have
    $\partial^{\alpha} u \in L_{p,\loc}(\Omega,\Cplx^N)$ (see \cite[Section 2.3]{Burenkov-sobolev-spaces-1998}). 
        
\subsection{Pre-spectral triple for the model example}\label{model example definitions}
    Let $N = 2^{\lfloor d/2\rfloor}$ and let $\{\gamma_1,\ldots,\gamma_d\}$ be $d$-dimensional Euclidean gamma matrices, i.e. a fixed family
    of $N\times N$ complex Hermitian matrices satisfying $\gamma_j\gamma_k + \gamma_k\gamma_j = 2\delta_{j,k}1$, $1\leq j,k\leq d$.
    The distributional Dirac operator $\Dd$ is defined as a linear combination of distributional derivatives:
    \begin{equation*}
        \Dd := \sum_{j=1} -i\gamma_j\otimes \partial_j.
    \end{equation*}
    
    Similarly, the distributional Laplacian $\Delta$ is defined as a sum of distributional derivatives:
    \begin{equation*}
        \Delta  := \sum_{j=1}^d \partial_{x_j}^2 .
    \end{equation*}
    Or equivalently, $1\otimes \Delta = -\Dd^2$.    
%
%
    Our model \pre-spectral triple will be based on the Dirac operator with Dirichlet boundary conditions, defined as:
    \begin{align*}
	\dom(D) := W^{1,2}_0(\Omega,\Cplx^N),\\
	Du = \Dd u,\quad u \in \dom(D).
    \end{align*}
    Our first step is to show that $D$ is closed and symmetric. With this goal in mind, we define the auxiliary operator $D_0$, defined
    as $D_0 = \Dd$ with domain $C^\infty_c(\Omega,\Cplx^N)$ and we prove that $D_0$ is symmetric and closable and that $D$ is the closure of $D_0$.
%
    The following proof rests primarily on 
    Green's identity. Let $F \in W^{1,1}(\Omega,\Cplx^d)$ be supported in a compact subset of $\Omega$. Then,
    \begin{equation}\label{Green div}
        \int_{\Omega} \sum_{j=1}^d (\partial_jF_j)(x)\,dx = 0.
    \end{equation}
    Note that no assumptions on the boundary $\partial\Omega$ are needed for the above identity to hold: since $F$ is smooth and compactly supported this can be proved with an application of Fubini's theorem.

    \begin{lemma}\label{lemma_D_0_symmetric}
        The operator $D_0$ defined as $\Dd$ with domain $C^\infty_c(\Omega,\Cplx^N)$ is symmetric (and hence, closable) and the graph norm $\|\cdot\|_{\Gamma(D_0)}$ of $D_0$ is equivalent to the Sobolev norm $\|\cdot\|_{W^{1,2}(\Omega, \Cplx^N)}.$
    \end{lemma}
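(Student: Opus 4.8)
\emph{Plan.} The statement has two parts: symmetry of $D_0$ (closability is then automatic) and the norm equivalence. I would treat them separately. The common feature that makes everything work is that every function in sight is smooth with compact support in $\Omega$, so the identity \eqref{Green div} may be used freely and no hypothesis on $\partial\Omega$ is ever invoked. For symmetry, fix $u,v\in C^\infty_c(\Omega,\Cplx^N)$ and compute $\langle D_0u,v\rangle_{L_2}-\langle u,D_0v\rangle_{L_2}$. Using that each $\gamma_j$ is Hermitian, applying the Leibniz rule to the scalar function $x\mapsto\langle u(x),\gamma_j v(x)\rangle_{\Cplx^N}$, and $\overline{-i}=i$, one rewrites the difference as
\[
\langle D_0u,v\rangle_{L_2}-\langle u,D_0v\rangle_{L_2}=-i\int_{\Omega}\sum_{j=1}^d\partial_j\langle u(x),\gamma_jv(x)\rangle_{\Cplx^N}\,dx.
\]
Putting $F_j(x):=\langle u(x),\gamma_jv(x)\rangle_{\Cplx^N}$, the vector $F=(F_1,\dots,F_d)$ lies in $C^\infty_c(\Omega,\Cplx^d)\subset W^{1,1}(\Omega,\Cplx^d)$ and is compactly supported, so \eqref{Green div} makes the right-hand side vanish. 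Hence $D_0$ is symmetric; since it is densely defined, $D_0\subseteq D_0^*$, so $D_0^*$ is densely defined and $D_0$ is closable (with closure $D_0^{**}$).

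\emph{Norm equivalence.} One direction is trivial: since the $\gamma_j$ are bounded matrices, $\|D_0u\|_{L_2}\le(\max_j\|\gamma_j\|)\sum_j\|\partial_ju\|_{L_2}$, giving $\|u\|_{\Gamma(D_0)}\le C\|u\|_{W^{1,2}(\Omega,\Cplx^N)}$ for $u\in C^\infty_c(\Omega,\Cplx^N)$. For the reverse I would exploit the Clifford relations. On $C^\infty_c(\Omega,\Cplx^N)$, since mixed partial derivatives commute,
\[
D_0^2=-\sum_{j,k=1}^d\gamma_j\gamma_k\,\partial_j\partial_k=-\sum_{j,k=1}^d\tfrac12(\gamma_j\gamma_k+\gamma_k\gamma_j)\,\partial_j\partial_k=-\sum_{j=1}^d\partial_j^2=-\Delta .
\]
Now apply the symmetry of $D_0$ with $w=D_0u\in C^\infty_c(\Omega,\Cplx^N)$ to get $\|D_0u\|_{L_2}^2=\langle u,D_0^2u\rangle_{L_2}=\langle u,-\Delta u\rangle_{L_2}$, and one more integration by parts in each coordinate (boundary terms vanishing by compact support) yields $\langle u,-\Delta u\rangle_{L_2}=\sum_j\|\partial_ju\|_{L_2}^2$. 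Therefore $\|u\|_{\Gamma(D_0)}^2=\|u\|_{L_2}^2+\sum_j\|\partial_ju\|_{L_2}^2=\|u\|_{W^{1,2}(\Omega,\Cplx^N)}^2$, so the two norms are equivalent (in fact equal).

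\emph{Main obstacle.} There is no genuine difficulty here; the only points requiring a little care are the bookkeeping that interleaves integration by parts with the matrix multiplication, and the remark that \eqref{Green div} is available for an \emph{arbitrary} open $\Omega$ precisely because the integrands are compactly supported. This lemma is the essential ingredient for the subsequent identification: the completion of $C^\infty_c(\Omega,\Cplx^N)$ in the graph norm of $D_0$, i.e. $\dom(\overline{D_0})$, coincides with $W^{1,2}_0(\Omega,\Cplx^N)$, so that $\overline{D_0}=D$ as claimed in Subsection \ref{model example definitions}.
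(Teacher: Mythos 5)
Your proof of symmetry coincides with the paper's: Leibniz on $\langle f,\gamma_j\cdot g\rangle_{\Cplx^N}$, assemble the divergence of the compactly supported vector field $F$, and invoke \eqref{Green div}. Closability then follows because $D_0\subseteq D_0^*$, as you say.

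For the norm identity the two arguments part ways in a small but genuine way. The paper expands $\|D_0 f\|_{L_2}^2=\sum_{j,k}\int_\Omega\langle\gamma_j\gamma_k\,\partial_k f,\partial_j f\rangle$ and disposes of the off-diagonal terms by a second Leibniz/Green's-identity step combined with the antisymmetry of $\gamma_j\gamma_k$ against the symmetry of $\partial_j\partial_k$. You instead establish the purely algebraic operator identity $D_0^2=-\Delta$ on $C^\infty_c(\Omega,\Cplx^N)$ (the Clifford cancellation happens before any integration), then reuse the already proved symmetry of $D_0$ to write $\|D_0 u\|^2=\langle u,D_0^2 u\rangle=\langle u,-\Delta u\rangle$, and finish with one more integration by parts. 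Both are correct and yield the stronger conclusion that the graph norm and the $W^{1,2}$ norm are actually equal on $C^\infty_c$. Your version is a little more modular -- it leans on the first half of the lemma rather than repeating the integration-by-parts bookkeeping -- while the paper's is self-contained within the second half; the content, and the crucial observation that no boundary regularity is needed because everything has compact support, are the same.
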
   
    \begin{proof}
        To prove the first assertion let $f,g\in \dom(D_0)$ be arbitrary. By the definition of $D_0$ we have 
        \begin{align*}
            \langle D_0f,g\rangle_{L_2(\Omega,\Cplx^n)}  &= -i\sum_{j=1}^d \int_\Omega \langle \gamma_j \cdot \partial_j f(\bx), g(\bx)\rangle_{\Cplx^N}d\bx.
        \end{align*}
        For every fixed $j=1,\dots, d$, using the Leibniz rule and the fact that $\gamma_j$ is unitary we obtain
        \begin{align*}
            \langle \gamma_j \cdot \partial_j f(\bx), g(\bx)\rangle_{\Cplx^N} &= \partial_j(\langle f(\bx), \gamma_j\cdot g(\bx)\rangle_{\Cplx^N})-\langle f(\bx), \partial_j(\gamma_j\cdot  g)(\bx)\rangle_{\Cplx^N}\\
                                                                              &= \partial_j(\langle f(\bx), \gamma_j\cdot g(\bx)\rangle_{\Cplx^N})-\langle f(\bx), \gamma_j\cdot (\partial_j g)(\bx)\rangle_{\Cplx^N}
        \end{align*}
        Hence, 
        \begin{align*}
            \langle D_0f,g\rangle_{L_2(\Omega,\Cplx^N)} &= -i\sum_{j=1}^d \Big(\int_\Omega \partial_j(\langle f(\bx), \gamma_j\cdot g(\bx)\rangle_{\Cplx^N})d\bx\\
                                                        &\quad +i\int_\Omega \langle f(\bx), \gamma_j\cdot  (\partial_j g)(\bx)\rangle_{\Cplx^N}d\bx\Big)\\
                                                        &=\int_\Omega \sum_{j=1}^d \partial_j\langle f(\bx), \gamma_j\cdot g(\bx)\rangle_{\Cplx^N})d\bx+\langle f, D_0 g\rangle_{L_2(\Omega, \Cplx^N)}.
        \end{align*}
        Thus, to show that the operator $D_0$ is symmetric, it is sufficient to show that 
        $$\int_\Omega \sum_{j=1}^d \partial_j\langle f(\bx), \gamma_j\cdot g(\bx)\rangle_{\Cplx^N}d\bx=0.$$
        To this end consider the vector field $\bbF$ defined on $\bx \in \Rl^d$ by:
        $\bbF(\bx)=\{\langle f(\bx),\gamma_j\cdot g(\bx)\rangle_{\Cplx^N}\}_{j=1}^d.$
        However, by assumption, the functions $f$ and $g$ are in $C^\infty_c(\Omega,\Cplx^N)$, and so $\bbF \in C^\infty_c(\Omega,\Cplx^d)$. Thus by \eqref{Green div},
        \begin{align*}
            \int_{\Omega} (\nabla\cdot \bbF)(\bx) \,d\bx &= \int_\Omega \sum_{j=1}^d \partial_j\langle f(\bx), \gamma_j\cdot g(\bx)\rangle_{\Cplx^N})d\bx\\
                                                         &=0,
        \end{align*}
        as required. 

        Next, to prove that the graph norm $\|\cdot\|_{\Gamma(D_0)}$ is equivalent to the Sobolev norm ${\|\cdot\|_{W^{1,2}(\Omega,\Cplx^N)}}$ it is sufficient to show that $\|D_0f\|_2=\|\nabla f\|_2$ for any $f\in C^\infty_c(\Omega, \Cplx^N)$. Let $f\in C^\infty_c(\Omega, \Cplx^N)$ be arbitrary. We have 
        \begin{align*}
            \langle D_0f, D_0f\rangle_{L^2(\Omega,\Cplx^N)} &= \int_\Omega \sum_{k,j=1}^d \langle \gamma_j\gamma_k\cdot (\partial_k f)(\bx), (\partial_j f)(\bx)\rangle_{\Cplx^N}\, d\bx\\
                                                            &= \sum_{k=1}^d\int_\Omega \langle (\partial_k f)(\bx), (\partial_k f)(\bx)\rangle_{\Cplx^N}\, d\bx\\
                                                            &\quad +\sum_{k\neq j}\int_\Omega \langle\gamma_j\gamma_k\cdot (\partial_k f)(\bx), (\partial_j f)(\bx)\rangle_{\Cplx^N}\, d\bx.
        \end{align*}
        The first term on the right-hand side above is $\|\nabla f\|_2^2$. Hence, it is sufficient to show that 
        $$\sum_{k\neq j}\int_\Omega \langle \gamma_j\gamma_k (\partial_k f)(\bx), (\partial_j f)(\bx)\rangle_{\Cplx^N}\, d\bx=0.$$

        Using the Leibniz rule, we have 
        \begin{align*}
            \sum_{k\neq j}\int_\Omega \langle \gamma_j\gamma_k (\partial_k f)(\bx), (\partial_j f)(\bx)\rangle_{\Cplx^N}\, d\bx
                    &=\sum_{k\neq j} \int_\Omega \partial_j\langle \gamma_j\gamma_k (\partial_k f)(\bx), f(\bx)\rangle_{\Cplx^N}\, d\bx\\
                    &\quad -\sum_{k\neq j} \int_\Omega \langle \gamma_j\gamma_k (\partial_j\partial_k f)(\bx), f(\bx)\rangle_{\Cplx^N}\, d\bx
        \end{align*}
        Since $f\in C^\infty_c(\Omega,\Cplx^N)$ and the matrices $\gamma_j$ and $\gamma_k$, $k\neq j$ anticommute, we have that 
        \begin{align*}
            \sum_{k\neq j} \langle \gamma_j\gamma_k (\partial_j\partial_k f)(\bx), f(\bx)\rangle_{\Cplx^N} &= \sum_{k<j}\langle \Big(\gamma_j\gamma_k (\partial_j\partial_k f)(\bx)-\gamma_j\gamma_k (\partial_k\partial_j f)(\bx)\Big), f(\bx)\rangle_{\Cplx^N}                                                                                \\
                                                                                                           &=0.
        \end{align*}
        Therefore, 
        \begin{align*}
            \sum_{k\neq j}\int_\Omega &\langle \gamma_j\gamma_k (\partial_k f)(\bx), (\partial_j f)(\bx)\rangle_{\Cplx^N}\, d\bx=\sum_{k\neq j} \int_\Omega \partial_j\langle \gamma_j\gamma_k (\partial_k f)(\bx), f(\bx)\rangle_{\Cplx^N}\, d\bx.
        \end{align*}
        Using again \eqref{Green div} for the latter integral, we obtain that:
        $$\sum_{k\neq j}\int_\Omega \langle \gamma_j\gamma_k (\partial_k f)(\bx), (\partial_j f)(\bx)\rangle_{\Cplx^N}\, d\bx=0,$$
        as required. 
    \end{proof} 
    
    
    Since $W^{1,2}_0(\Omega,\Cplx^N)$ is, by definition, the closure of $C^\infty_c(\Omega, \Cplx^N)$, Lemma \ref{lemma_D_0_symmetric} implies the following:   
    \begin{corollary}\label{cor_closure_D_0}
        Let $D_0=\Dd$ with $\dom(D_0)=C^\infty_c(\Omega,\Cplx^N)$ and let $D=\Dd$ with $\dom(D)=W^{1,2}_0(\Omega, \Cplx^N)$. The operator $D$ is the closure $\overline{D_0}$ of the operator $D_0$. In particular, $D$ is symmetric and the graph norm 
        ${\|\cdot\|_{\Gamma(D)}}$ is equivalent to the Sobolev norm ${\|\cdot\|_{W^{1,2}(\Omega,\Cplx^N)}}$. 
    \end{corollary}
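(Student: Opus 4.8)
The plan is to deduce the corollary directly from Lemma \ref{lemma_D_0_symmetric}, with essentially no new analytic work; the content of the corollary is just a translation of the norm-equivalence statement there into a statement about the closure of $D_0$.

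First I would recall the definition of the closure: $\overline{D_0}$ is the operator whose graph is the closure in $L_2(\Omega,\Cplx^N)\oplus L_2(\Omega,\Cplx^N)$ of the graph of $D_0$, so that $\dom(\overline{D_0})$ is the completion of $\dom(D_0) = C^\infty_c(\Omega,\Cplx^N)$ with respect to the graph norm $\|\cdot\|_{\Gamma(D_0)}$. By Lemma \ref{lemma_D_0_symmetric}, $D_0$ is symmetric (hence closable, so $\overline{D_0}$ exists) and on $C^\infty_c(\Omega,\Cplx^N)$ the norm $\|\cdot\|_{\Gamma(D_0)}$ is equivalent to the Sobolev norm $\|\cdot\|_{W^{1,2}(\Omega,\Cplx^N)}$. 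Therefore the completion in question coincides with the $W^{1,2}$-closure of $C^\infty_c(\Omega,\Cplx^N)$, which is exactly $W^{1,2}_0(\Omega,\Cplx^N) = \dom(D)$ by the definition recalled in Subsection \ref{sobolev spaces section}.

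Next I would verify that $\overline{D_0}$ and $D$ act in the same way on this common domain. Given $u \in W^{1,2}_0(\Omega,\Cplx^N)$, choose $u_n \in C^\infty_c(\Omega,\Cplx^N)$ with $u_n \to u$ in $W^{1,2}$. Since each distributional derivative $\partial_j$ is continuous from $W^{1,2}(\Omega,\Cplx^N)$ into $L_2(\Omega,\Cplx^N)$, the constant-coefficient first-order operator $\Dd$ is continuous $W^{1,2}(\Omega,\Cplx^N)\to L_2(\Omega,\Cplx^N)$, so $D_0 u_n = \Dd u_n \to \Dd u$ in $L_2$. Hence $(u_n, D_0 u_n)\to (u,\Dd u)$ in the graph norm, which shows $u\in\dom(\overline{D_0})$ with $\overline{D_0}u = \Dd u = Du$. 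Together with the domain identification of the previous step this gives $D = \overline{D_0}$.

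The two ``in particular'' assertions are then automatic: the closure of a symmetric operator is symmetric, so $D = \overline{D_0}$ is symmetric; and a norm equivalence on a dense subspace passes to the completion, so the equivalence of $\|\cdot\|_{\Gamma(D_0)}$ with $\|\cdot\|_{W^{1,2}(\Omega,\Cplx^N)}$ on $C^\infty_c(\Omega,\Cplx^N)$ upgrades to the equivalence of $\|\cdot\|_{\Gamma(D)}$ with $\|\cdot\|_{W^{1,2}(\Omega,\Cplx^N)}$ on all of $W^{1,2}_0(\Omega,\Cplx^N)$. I do not expect any genuine obstacle here; the only step requiring a line of care is the identification of the abstract graph-norm completion of $C^\infty_c(\Omega,\Cplx^N)$ with the concrete space $W^{1,2}_0(\Omega,\Cplx^N)$, and this is precisely what the norm-equivalence half of Lemma \ref{lemma_D_0_symmetric} supplies.
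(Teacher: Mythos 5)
Your proposal is correct and follows exactly the paper's route: the paper states Corollary \ref{cor_closure_D_0} as an immediate consequence of Lemma \ref{lemma_D_0_symmetric} together with the definition of $W^{1,2}_0(\Omega,\Cplx^N)$ as the $W^{1,2}$-closure of $C^\infty_c(\Omega,\Cplx^N)$, which is precisely the reasoning you spell out. The extra details you add (identifying the abstract graph-norm completion with the concrete Sobolev space and checking the action of $\overline{D_0}$) are a careful elaboration of the same one-line argument, not a different approach.
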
   
    
    The following description of the adjoint of the Dirac operator with Dirichlet boundary conditions may be seen as a consequence of the principle
    of elliptic regularity \cite[Chapter 1, Theorem 7.2]{shubin}. It is also possible to give an elementary proof following \cite{Schmidt-1995}.
    \begin{theorem}\label{dom D^* description}
        The adjoint $D^*$ of the Dirac operator $D$ with Dirichlet boundary conditions is described as:
        \begin{equation*}
            \dom(D^*) = \{u \in W^{1,2}_{\loc}(\Omega,\Cplx^N)\cap L_2(\Omega,\Cplx^N)\;:\; \Dd u \in L_2(\Omega,\Cplx^N)\}
        \end{equation*}
        and $D^*u = \Dd u$ for $u \in \dom(D^*)$.
    \end{theorem}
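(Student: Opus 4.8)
The plan is to identify $D^*$ with the adjoint of the symmetric operator $D_0 = \Dd$ on $C^\infty_c(\Omega,\Cplx^N)$, to compute that adjoint at the level of distributions, and then to upgrade the regularity of elements of its domain by interior elliptic regularity. Since $D = \overline{D_0}$ by Corollary \ref{cor_closure_D_0} and $D_0$ is densely defined, we have $D^* = (\overline{D_0})^* = D_0^*$, so it suffices to describe $D_0^*$.

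First I would compute $D_0^*$ directly from the definition. By definition $u \in \dom(D_0^*)$ if and only if there is $w \in L_2(\Omega,\Cplx^N)$ with $\langle D_0\phi,u\rangle_{L_2(\Omega,\Cplx^N)} = \langle \phi,w\rangle_{L_2(\Omega,\Cplx^N)}$ for every $\phi \in C^\infty_c(\Omega,\Cplx^N)$. Since the $\gamma_j$ are Hermitian, applying the definition of the distributional derivative coordinate-wise (and keeping careful track of complex conjugates when passing between the bilinear distributional pairing and the sesquilinear $L_2$ inner product) gives $\langle D_0\phi,u\rangle_{L_2(\Omega,\Cplx^N)} = \langle \phi,\Dd u\rangle$, where $\Dd u \in \Dc'(\Omega,\Cplx^N)$. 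Hence $u \in \dom(D_0^*)$ if and only if the distribution $\Dd u$ is represented by an $L_2$ function, in which case necessarily $w = \Dd u$, so that
\[
    \dom(D^*) = \{u \in L_2(\Omega,\Cplx^N)\;:\;\Dd u \in L_2(\Omega,\Cplx^N)\},\qquad D^*u = \Dd u,
\]
where at this stage $\Dd u$ is a priori only a distribution.

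The substantive part of the theorem is then the assertion that every such $u$ in fact lies in $W^{1,2}_{\loc}(\Omega,\Cplx^N)$. Here I would invoke interior elliptic regularity. The operator $\Dd$ is a first-order elliptic system: its principal symbol $\sigma(\xi) = \sum_{j=1}^d \gamma_j \xi_j$ satisfies $\sigma(\xi)^2 = |\xi|^2\cdot 1$ by the Clifford relations, hence is invertible for $\xi \neq 0$. By the interior elliptic regularity theorem \cite[Chapter 1, Theorem 7.2]{shubin} applied to $\Dd$ (or, equivalently, by bootstrapping through $\Delta = -\Dd^2$), if $u \in L_{2,\loc}(\Omega,\Cplx^N)$ and $\Dd u \in L_{2,\loc}(\Omega,\Cplx^N)$ then $u \in W^{1,2}_{\loc}(\Omega,\Cplx^N)$. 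Since this is a purely interior (local) statement, no hypothesis on $\partial\Omega$ is required; alternatively one may give an elementary argument via Friedrichs mollifiers along the lines of \cite{Schmidt-1995}. The reverse inclusion is immediate: if $u \in W^{1,2}_{\loc}(\Omega,\Cplx^N)\cap L_2(\Omega,\Cplx^N)$ with $\Dd u \in L_2(\Omega,\Cplx^N)$, then the distributional $\Dd u$ agrees with the one computed from the weak derivatives of $u$, so $u \in \dom(D^*)$. Combining the two inclusions gives the claimed description of $\dom(D^*)$, with $D^*u = \Dd u$ throughout.

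The main obstacle is the elliptic regularity step: one needs the interior regularity theorem in the present generality — an arbitrary open $\Omega \subseteq \Rl^d$ and a constant-coefficient first-order elliptic system — together with a careful justification of the passage from ``$\Dd u \in L_2$ as a distribution'' to ``$u \in W^{1,2}_{\loc}$''. The remaining points, namely $D^* = D_0^*$ and the distributional computation of $D_0^*$, are routine.
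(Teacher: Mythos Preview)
Your proposal is correct and follows exactly the approach the paper itself indicates: the paper does not give a detailed proof but simply states that the result ``may be seen as a consequence of the principle of elliptic regularity \cite[Chapter 1, Theorem 7.2]{shubin}'' with an elementary alternative ``following \cite{Schmidt-1995}''. Your reduction $D^* = D_0^*$ via Corollary~\ref{cor_closure_D_0}, the distributional identification of $\dom(D_0^*)$, and the appeal to interior elliptic regularity for the first-order system $\Dd$ (with the Schmidt mollifier argument noted as an alternative) are precisely what the paper has in mind.
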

    
    It follows from Theorem \ref{dom D^* description} above that the operators $D^*D$ and $DD^*$ are described as follows:
    \begin{proposition}\label{prop_DD^*_and_D^*D_exact}
        Let $D$ be the Dirac operator with Dirichlet boundary conditions on $\Omega$. We have the following:
        \begin{enumerate}[{\rm (i)}]
            \item\label{D*D identification} The operator $D^*D$ is the Dirichlet Laplacian in the domain $\Omega$, that is
                \begin{align*}
                    \dom(D^*D)&=W^{1,2}_0(\Omega,\Cplx^N)\cap W^{2,2}(\Omega,\Cplx^N),\\
                    D^*Du&=-(1\otimes \Delta) u, \quad u\in \dom(D^*D).
                \end{align*} 

            \item\label{DD* identification} The operator $DD^*$ is defined as
                \begin{align*}
                    \dom(DD^*) &= \{u\in L_2(\Omega,\Cplx^N)\cap W^{1,2}_{\loc}(\Omega,\Cplx^N)\;:\; \Dd u \in W^{1,2}_0(\Omega,\Cplx^N)\},\\
                    DD^*u&=-(1\otimes \Delta) u, \quad u\in \dom(D^*D).
                \end{align*} 
        \end{enumerate}
    \end{proposition}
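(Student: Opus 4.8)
The plan is to read off both operators by unwinding the definition of the adjoint product $E^*E$, feeding in the description of $\dom(D^*)$ from Theorem~\ref{dom D^* description}, together with the purely algebraic identity $\Dd^2=-(1\otimes\Delta)$. For the latter, since $\partial_j\partial_k=\partial_k\partial_j$ on distributions and $\gamma_j\gamma_k+\gamma_k\gamma_j=2\delta_{j,k}$,
\begin{equation*}
    \Dd^2=-\sum_{j,k=1}^d\gamma_j\gamma_k\,\partial_j\partial_k=-\sum_{j,k=1}^d\tfrac{1}{2}(\gamma_j\gamma_k+\gamma_k\gamma_j)\,\partial_j\partial_k=-\sum_{j=1}^d\partial_j^2=-(1\otimes\Delta),
\end{equation*}
an identity valid on any $u$ for which both sides make distributional sense.

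For the statement about $D^*D$: by definition $\dom(D^*D)=\{u\in\dom(D):Du\in\dom(D^*)\}$, and since $\dom(D)=W^{1,2}_0(\Omega,\Cplx^N)$ with $Du=\Dd u$, Theorem~\ref{dom D^* description} shows that $u\in\dom(D^*D)$ precisely when $u\in W^{1,2}_0(\Omega,\Cplx^N)$, $\Dd u\in W^{1,2}_{\loc}(\Omega,\Cplx^N)\cap L_2(\Omega,\Cplx^N)$ and $\Dd^2u\in L_2(\Omega,\Cplx^N)$; on such $u$ we get $D^*Du=\Dd^2u=-(1\otimes\Delta)u$, so the formula for the operator is immediate. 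Moreover interior elliptic regularity (the Shubin reference already used for Theorem~\ref{dom D^* description}) shows that $u\in W^{1,2}_0$ together with $\Dd^2u=-\Delta u\in L_2$ already forces $u\in W^{2,2}_{\loc}(\Omega,\Cplx^N)$, so the conditions on $\Dd u$ are automatic and $\dom(D^*D)=\{u\in W^{1,2}_0(\Omega,\Cplx^N):\Delta u\in L_2(\Omega,\Cplx^N)\}$, i.e. exactly the domain of the Dirichlet Laplacian. The inclusion $W^{1,2}_0\cap W^{2,2}\subseteq\dom(D^*D)$ is then trivial, and the content of the statement is the reverse inclusion $\dom(D^*D)\subseteq W^{2,2}(\Omega,\Cplx^N)$.

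I expect this last inclusion to be the main obstacle: it is precisely global $L_2$-elliptic regularity up to the boundary for the Dirichlet problem, and this is the point at which a regularity hypothesis on $\partial\Omega$ must be invoked (consistent with the smooth model example of the introduction). The route I would take is the standard one: localise near a boundary point, straighten $\partial\Omega$ by a diffeomorphism, and run the difference-quotient argument in the tangential directions, using that $u\in W^{1,2}_0$ supplies the vanishing trace and recovering the remaining normal second derivative algebraically from $-\Delta u\in L_2$; alternatively one simply quotes a global $W^{2,2}$-regularity theorem for the Dirichlet problem off the shelf. Combined with $u\in W^{1,2}_0$ this gives $\dom(D^*D)\subseteq W^{1,2}_0\cap W^{2,2}$ and completes this part.

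The statement about $DD^*$ needs no boundary regularity and is a direct unwinding of definitions. By definition $\dom(DD^*)=\{u\in\dom(D^*):D^*u\in\dom(D)\}$; by Theorem~\ref{dom D^* description}, $u\in\dom(D^*)$ means $u\in W^{1,2}_{\loc}(\Omega,\Cplx^N)\cap L_2(\Omega,\Cplx^N)$ with $\Dd u\in L_2(\Omega,\Cplx^N)$ and $D^*u=\Dd u$, while the condition $D^*u\in\dom(D)=W^{1,2}_0(\Omega,\Cplx^N)$ says exactly $\Dd u\in W^{1,2}_0(\Omega,\Cplx^N)$ (which already subsumes $\Dd u\in L_2$). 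This gives the stated domain, and for such $u$, since $\Dd u\in W^{1,2}_0\subseteq W^{1,2}$ one may legitimately apply $\Dd$ again to obtain $DD^*u=\Dd^2u=-(1\otimes\Delta)u$, finishing the proof.
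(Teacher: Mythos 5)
Your reduction of both parts to the definition of $E^*E$, the identity $\Dd^2=-(1\otimes\Delta)$, and Theorem~\ref{dom D^* description} is exactly what the paper does, and your treatment of part~(ii) is identical to the paper's (it is a direct unwinding of definitions, as you say). You also correctly isolate the real content of part~(i): after unwinding, one must show that the domain of $D^*D$ is precisely $W^{1,2}_0\cap W^{2,2}$, which is a statement of $W^{2,2}$-regularity up to the boundary for the Dirichlet problem.

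Where you diverge is in how this regularity is obtained. You propose to prove the hard inclusion $\dom(D^*D)\subseteq W^{2,2}(\Omega,\Cplx^N)$ directly, by boundary straightening and tangential difference quotients (or by quoting a global regularity theorem). The paper instead proves only the \emph{easy} inclusion $W^{1,2}_0\cap W^{2,2}\subseteq\dom(D^*D)$, so that $-1\otimes\Delta_D\subseteq D^*D$, and then closes the argument with a purely operator-theoretic observation: since $D$ is closed, $D^*D$ is automatically self-adjoint (von Neumann's theorem), and the Dirichlet Laplacian $-1\otimes\Delta_D$ on the domain $W^{1,2}_0\cap W^{2,2}$ is self-adjoint by \cite[Theorem 10.19]{Schmudgen-2012}. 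A self-adjoint operator admits no proper self-adjoint extension, so the inclusion is in fact equality. This trick lets the paper avoid re-deriving any boundary elliptic estimate: the only analytic input is the one-line verification of the easy inclusion, with the regularity content entirely outsourced to the quoted self-adjointness of the Dirichlet Laplacian. Your route is also valid and more self-contained, but it is longer and requires a smoothness hypothesis on $\partial\Omega$ to be imposed explicitly at the point of the difference-quotient argument, whereas the paper's route packages that dependence into the cited theorem and supplements it with a slick maximality argument. If you keep your approach, do make sure to actually invoke (or carry out) the boundary regularity theorem rather than treating it as optional; in the paper's argument the corresponding burden is shifted to verifying that the Friedrichs/Dirichlet Laplacian with domain $W^{1,2}_0\cap W^{2,2}$ really is self-adjoint, which is the statement of \cite[Theorem 10.19]{Schmudgen-2012}.
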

    \begin{proof}
        By the definition of $\dom(D^*D)$ we have that 
        \begin{align*}
            &\dom(D^*D)\\
            &=\Big\{h\in W_0^{1,2}(\Omega,\Cplx^N): D h\in{\rm dom}(D^*)\Big\}\\
            &=\Big\{h\in W_0^{1,2}(\Omega,\Cplx^N): \Dd h\in L_2(\Omega,\Cplx^N)\cap W^{1,2}_{\loc}(\Omega,\Cplx^N),\quad \Dd^2 h\in L_2(\Omega,\Cplx^N)\Big\}.
        \end{align*}

        We will now show that the operator $D^*D$ is exactly the well-known Dirichlet Laplacian (as defined in e.g. \cite[Theorem 10.19]{Schmudgen-2012}).
        Following \cite[Theorem 10.19]{Schmudgen-2012}, the Dirichlet Laplacian $-1\otimes \Delta_D$ on $L_2(\Omega,\Cplx^N)$ is defined on the domain
        $$\dom(-1\otimes \Delta_D)=W_0^{1,2}(\Omega,\Cplx^N)\cap W^{2,2}(\Omega,\Cplx^N)$$
        and that $-1\otimes \Delta_D$ acts as the distributional Laplacian on its domain. We now show that $D^*D$ is exactly $-1\otimes \Delta_D$.

        It is clear that $W_0^{1,2}(\Omega,\Cplx^N)\cap W^{2,2}(\Omega,\Cplx^N)\subset \dom(D^*D)$. Hence, since both the Dirichlet Laplacian and $D^*D$ acts as distributional Laplacian on their domain, we have that 
        $$-1\otimes \Delta_D\subset D^*D.$$
        Since $D$ is closed,  the operator $D^*D$ is self-adjoint (see e.g. \cite[Theorem X.25]{Reed-Simon-II-1975}). Hence, $D^*D$ is a self-adjoint extension of a self-adjoint operator $-1\otimes\Delta_D,$ and therefore, $D^*D=-1\otimes\Delta_D.$

        The second assertion of the proposition follows directly from the definition of the operator $DD^*$.
    \end{proof}
    
    \begin{remark}\label{mapping_remark}
	It follows from Proposition \ref{prop_DD^*_and_D^*D_exact} by induction that for all $k \geq 1$ we have:
	\begin{equation*}
	    \dom((DD^*)^k) \subseteq W^{2k,2}_{\loc}(\Omega,\Cplx^N).
	\end{equation*}
	Since,
	\begin{equation*}
	    W^{2k,2}_0(\Omega,\Cplx^N) \subseteq \dom(D^{2k})
	\end{equation*}
	it follows that for all $f \in C^\infty_c(\Omega)$ and $k\geq 0$, we have:
	\begin{equation*}
	    1\otimes M_f:\dom((DD^*)^k)\to \dom(D^{2k})
	\end{equation*}
	and hence,
	\begin{equation*}
	    1\otimes M_f:\dom((DD^*)^k)\to \dom((D^*D)^k).
	\end{equation*}
    \end{remark}
%

    Given the above description of $\dom(D^*D)$, we obtain the following:
    \begin{theorem}\label{dirichlet dimension}
        Let $f \in C^\infty_c(\Omega)$ and let $k$ be a positive integer. Then,
        \begin{equation*}
            (1\otimes M_f)(1+D^*D)^{-k} \in \Lc_{d/(2k),\infty}
        \end{equation*}
        and also:
        \begin{equation*}
            (1\otimes M_f)(1+DD^*)^{-k} \in \Lc_{d/(2k),\infty}.
        \end{equation*}
    \end{theorem}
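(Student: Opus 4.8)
The plan is to reduce the estimate to the classical Cwikel--Birman--Solomyak bound on $\Rl^{d}$, exploiting the identification of $D^{*}D$ with the Dirichlet Laplacian (Proposition~\ref{prop_DD^*_and_D^*D_exact}) and the mapping properties from Remark~\ref{mapping_remark}. Write $M_{f}$ for $1\otimes M_{f}$, and recall that on $\dom((D^{*}D)^{k})$ the operator $(1+D^{*}D)^{k}$ acts as the distributional operator $(1-1\otimes\Delta)^{k}$, and similarly for $DD^{*}$. I would first prove the statement for $D^{*}D$ and then deduce the one for $DD^{*}$.

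For the Dirichlet case I would fix $g\in C_{c}^{\infty}(\Omega)$ with $g\equiv 1$ near $\supp f$, so $M_{f}M_{g}=M_{f}$, and let $J\colon L_{2}(\Omega,\Cplx^{N})\to L_{2}(\Rl^{d},\Cplx^{N})$ be extension by zero. By Remark~\ref{mapping_remark} (equivalently, interior elliptic regularity for $-\Delta$), every $v\in\dom((D^{*}D)^{k})$ lies in $W^{2k,2}_{\loc}(\Omega,\Cplx^{N})$, so $M_{g}v$ is compactly supported in $\Omega$ and its zero-extension lies in $W^{2k,2}(\Rl^{d},\Cplx^{N})=\dom((1-1\otimes\Delta)^{k})$; the interior a priori estimate moreover gives $\|M_{h}(1+D^{*}D)^{-k}\phi\|_{W^{2k,2}}\lesssim\|\phi\|$ for each $h\in C_{c}^{\infty}(\Omega)$. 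Using the Leibniz rule I would write, distributionally on $\Rl^{d}$,
\[
(1-1\otimes\Delta)^{k}\,J\,M_{g}\,(1+D^{*}D)^{-k}\phi \;=\; J B_{g}\phi,\qquad B_{g}:=M_{g}+[(1-1\otimes\Delta)^{k},M_{g}](1+D^{*}D)^{-k},
\]
the commutator being a differential operator of order $\leq 2k-1$ with coefficients supported in a fixed compact subset of $\Omega$; hence $B_{g}$ is bounded on $L_{2}(\Omega,\Cplx^{N})$ by the interior estimate. Inverting $(1-1\otimes\Delta)^{k}$ (legitimate since the range sits in its domain) yields $M_{g}(1+D^{*}D)^{-k}=J^{*}(1-1\otimes\Delta)^{-k}J B_{g}$, so that
\[
M_{f}(1+D^{*}D)^{-k}=M_{f}M_{g}(1+D^{*}D)^{-k}=J^{*}\big(M_{f}(1-1\otimes\Delta)^{-k}\big)J B_{g},
\]
using $M_{f}J^{*}=J^{*}M_{f}$ (valid as $f$ is supported in $\Omega$). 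Since $M_{f}(1-1\otimes\Delta)^{-k}\in\Lc_{d/(2k),\infty}(L_{2}(\Rl^{d},\Cplx^{N}))$ by the Cwikel estimate, and $J^{*},JB_{g}$ are bounded, the ideal property gives $M_{f}(1+D^{*}D)^{-k}\in\Lc_{d/(2k),\infty}$.

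For $DD^{*}$ I would argue similarly: by Remark~\ref{mapping_remark}, $M_{g}$ maps $\dom((DD^{*})^{k})$ into $\dom((D^{*}D)^{k})$, so $(1+D^{*}D)^{k}M_{g}(1+DD^{*})^{-k}$ is well defined, and the same Leibniz computation (using that both $D^{*}D$ and $DD^{*}$ act as $-1\otimes\Delta$ on their domains, and $\dom((DD^{*})^{k})\subseteq W^{2k,2}_{\loc}$) shows it equals $M_{g}+[(1-1\otimes\Delta)^{k},M_{g}](1+DD^{*})^{-k}$, which is bounded. Then
\[
M_{f}(1+DD^{*})^{-k}=\big(M_{f}(1+D^{*}D)^{-k}\big)\big((1+D^{*}D)^{k}M_{g}(1+DD^{*})^{-k}\big)\in\Lc_{d/(2k),\infty}
\]
by the first part and the ideal property.

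The main obstacle I anticipate is precisely the boundedness of the correction operators $B_{g}$ and $(1+D^{*}D)^{k}M_{g}(1+DD^{*})^{-k}$: multiplication by the cutoff $g$ does not exactly intertwine the Dirichlet operator with $1-1\otimes\Delta$ on $\Rl^{d}$ (nor $DD^{*}$ with $D^{*}D$), but the discrepancy produced by the Leibniz rule is a differential operator of strictly lower order with compactly supported coefficients, which is absorbed by interior elliptic regularity together with the domain identifications of Proposition~\ref{prop_DD^*_and_D^*D_exact} and Remark~\ref{mapping_remark}. The only external input is the classical Cwikel--Birman--Solomyak singular-value bound for $M_{f}(1-1\otimes\Delta)^{-k}$ on $\Rl^{d}$; note that for $2k\geq d$ one has $\Lc_{1}\subseteq\Lc_{d/(2k),\infty}$ and this operator is in fact trace class, so the statement is trivial in that range.
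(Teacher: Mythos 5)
Your argument is correct, and it reaches the conclusion by a genuinely different route from the paper. Both proofs exploit the identification of $D^{*}D$ with the Dirichlet Laplacian (Proposition~\ref{prop_DD^*_and_D^*D_exact}) and the mapping properties of Remark~\ref{mapping_remark}, and both reduce the second claim to the first via the bounded operator $(1+D^{*}D)^{k}(1\otimes M_{\phi})(1+DD^{*})^{-k}$, exactly as you do. The difference lies in the reference operator: the paper chooses a bounded open set $U$ with smooth boundary, compactly contained in $\Omega$, observes that $(1-\Delta_{D,U})^{k}(1\otimes M_{f})(1+D^{*}D)^{-k}$ is everywhere defined and closed (hence bounded, by the closed graph theorem, no explicit Leibniz bookkeeping required), and then invokes Weyl asymptotics for the Dirichlet Laplacian on $U$ \cite[Theorem 12.14]{Schmudgen-2012} to conclude $(1-\Delta_{D,U})^{-k}\in\Lc_{d/(2k),\infty}$. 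You instead zero-extend to $\Rl^{d}$ via $J$ and invoke the Cwikel--Birman--Solomyak estimate for $M_{f}(1-\Delta_{\Rl^{d}})^{-k}$, absorbing the discrepancy into the bounded correction $B_{g}$ by the interior a priori estimate. Your route trades Weyl asymptotics on a bounded smooth domain for a Cwikel-type bound on all of $\Rl^{d}$; the paper's closed-graph-theorem formulation is a little leaner because it sidesteps the explicit commutator $[(1-\Delta)^{k},M_{g}]$, but your Leibniz expansion makes the mechanism transparent. Either input (Weyl on $U$ or Cwikel on $\Rl^{d}$) suffices, so the two arguments are of comparable strength.
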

    \begin{proof}
        Since $f$ is compactly supported, we may select an open set $U$ with smooth boundary such that $f$ is supported in $U$ and $U$ has compact closure in $\Omega$. Due
        to Remark \ref{mapping_remark}, we have that:
        \begin{equation*}
            (1\otimes M_f)(1+D^*D)^{-k}:L_2(\Omega,\Cplx^N)\to W^{2k,2}_0(U,\Cplx^N).
        \end{equation*}
        Using Remark \ref{mapping_remark} applied to $U$ instead of $\Omega$, the space $W^{2k,2}_0(\Omega,\Cplx^N)$ is contained in the domain of the $k$th power of the Dirichlet Laplace operator $\Delta_{D,U}$ on $U$, so the operator:
        \begin{equation*}
            (1-\Delta_{D,U})^k(1\otimes M_f)(1+D^*D)^{-k}
        \end{equation*}
        is closed and everywhere defined, and hence bounded. However since the closure of $U$ is compact and has smooth boundary, the operator $(1-\Delta_{D,U})^{-k}$
        is in $\Lc_{d/(2k),\infty}$ \cite[Theorem 12.14]{Schmudgen-2012}. Hence $(1\otimes M_f)(1+D^*D)^{-k} \in \Lc_{d/(2k),\infty}$.
        
        To prove the second claim, we use the fact that $1\otimes M_f$ maps $\dom((DD^*)^k)$ into $\dom((D^*D)^k)$. This implies that the operator:
        \begin{equation*}
            T := (1+D^*D)^{k}(1\otimes M_f)(1+DD^*)^{-k}
        \end{equation*}
        is everywhere defined and closed, and hence bounded by the closed graph theorem. Select $\phi \in C^\infty_c(\Omega)$ such that $\phi f = f$. Then,
        \begin{equation*}
            (1\otimes M_f)(1+DD^*)^{-k} = (1\otimes M_{\phi})(1+D^*D)^{-k}T.
        \end{equation*}
        Since $(1\otimes M_f)(1+D^*D)^{-k} \in \Lc_{d/(2k),\infty}$, we conclude that $(1\otimes M_f)(1+DD^*)^{-k} \in \Lc_{d/(2k),\infty}$.
    \end{proof}
    
    An application of the principle of elliptic regularity (as in \cite[Chapter 1, Theorem 7.2]{shubin}) yields the following:
    \begin{proposition}\label{helmholtz equation}
        The subspace of $u \in L_2(\Omega)$ which solve the Helmholtz equation:
        \begin{equation*}
            (1-\Delta)u = 0
        \end{equation*}
        is a closed subspace of $L_2(\Omega)$, and consists of smooth functions.
    \end{proposition}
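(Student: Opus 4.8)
The plan is to treat the two assertions — closedness and smoothness — separately. The smoothness is a direct application of the cited elliptic regularity theorem, while the closedness is an elementary continuity argument.

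First I would fix notation: write $\mathcal{N} := \{u \in L_2(\Omega) : (1-\Delta)u = 0 \text{ in } \Dc'(\Omega)\}$, where $\Delta u$ denotes the distributional Laplacian. Since $u \mapsto \Delta u$ is linear on $\Dc'(\Omega)$, the set $\mathcal{N}$ is a linear subspace. To see it is closed, observe that, by the definition of distributional derivative and the formal self-adjointness of $1-\Delta$ (the factor $(-1)^{|\alpha|}$ is $+1$ for $|\alpha|=2$), the condition $(1-\Delta)u = 0$ tested against a fixed $\phi \in C^\infty_c(\Omega)$ reads $(u, (1-\Delta)\phi) = 0$. As $(1-\Delta)\phi$ again lies in $C^\infty_c(\Omega) \subseteq L_2(\Omega)$, the functional $u \mapsto (u,(1-\Delta)\phi)$ is a bounded linear functional on $L_2(\Omega)$ by the Cauchy--Schwarz inequality. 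Hence $\mathcal{N} = \bigcap_{\phi \in C^\infty_c(\Omega)} \ker\big(u \mapsto (u,(1-\Delta)\phi)\big)$ is an intersection of kernels of bounded functionals, and therefore a closed subspace of $L_2(\Omega)$. Equivalently, if $u_n \to u$ in $L_2(\Omega)$ with each $u_n \in \mathcal{N}$, then $(u,(1-\Delta)\phi) = \lim_n (u_n,(1-\Delta)\phi) = 0$ for every $\phi$, so $u \in \mathcal{N}$.

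For the smoothness claim I would note that $1 - \Delta$ is a second-order linear differential operator with constant — in particular smooth — coefficients, and it is elliptic, its principal symbol being $\xi \mapsto |\xi|^2$, which is nonvanishing for $\xi \neq 0$. Any $u \in \mathcal{N}$ is a distribution on $\Omega$ solving $(1-\Delta)u = 0$, whose right-hand side $0$ is smooth; the interior elliptic regularity theorem \cite[Chapter 1, Theorem 7.2]{shubin} then gives that $u$ agrees almost everywhere with an element of $C^\infty(\Omega)$. This proves the proposition. There is no substantial obstacle here: the only genuine input is the cited elliptic regularity statement, and the remaining work amounts to observing that the distributional equation is preserved under $L_2$-limits, which is immediate from continuity of the pairing against a fixed test function; the only mild care needed is to phrase $(1-\Delta)u = 0$ as a family of linear constraints on $u$ and to note that $1-\Delta$ maps $C^\infty_c(\Omega)$ into itself, so that these constraints are indeed represented by $L_2$ functions.
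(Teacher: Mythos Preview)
Your argument is correct and matches the paper's approach: the paper does not spell out a proof at all, merely prefacing the proposition with ``An application of the principle of elliptic regularity (as in \cite[Chapter 1, Theorem 7.2]{shubin}) yields the following,'' so your write-up is in fact more detailed than what appears there. Your treatment of closedness via the intersection of kernels of the bounded functionals $u\mapsto (u,(1-\Delta)\phi)$ is the natural way to make this explicit, and your smoothness argument is exactly the cited elliptic regularity.
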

    According to the above proposition, we may consider the projection $P:L_2(\Omega)\to L_2(\Omega)$ onto
    the closed subspace of $u$ such that $(1-\Delta)u = 0$.
    
    \begin{lemma}\label{image of T}
        Let $T$ be the linear operator:
        \begin{equation*}
            T := (1+D^*D)^{-1}-(1+DD^*)^{-1}.
        \end{equation*}
        Then $T = (1\otimes P)T$.
    \end{lemma}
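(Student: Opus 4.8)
The plan is to show that the range of $T$ consists of functions annihilated by $1-\Delta$, so that $(1\otimes P)$ acts as the identity on $\mathrm{im}(T)$. First I would recall that by Proposition \ref{prop_DD^*_and_D^*D_exact}, both $D^*D$ and $DD^*$ act as $-1\otimes\Delta$ on their respective domains. Fix $\xi \in L_2(\Omega,\Cplx^N)$ and set $u = (1+D^*D)^{-1}\xi$ and $v = (1+DD^*)^{-1}\xi$, so that $u \in \dom(D^*D)$ and $v \in \dom(DD^*)$, and $(1-(1\otimes\Delta))u = \xi = (1-(1\otimes\Delta))v$ in the distributional sense (using that both domains are contained in $W^{1,2}_{\loc}$, so the distributional Laplacian makes sense pointwise-locally). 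Subtracting, $(1-(1\otimes\Delta))(u-v) = 0$, i.e. $T\xi = u-v$ solves the Helmholtz equation distributionally. By Proposition \ref{helmholtz equation} (elliptic regularity), $T\xi$ is then a genuine element of the closed subspace onto which $P$ projects — strictly speaking one should note that $T\xi \in L_2$ since both $u,v \in L_2$, which places it in the domain of $P$ — and hence $(1\otimes P)T\xi = T\xi$. As $\xi$ was arbitrary, $T = (1\otimes P)T$.

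The only mildly delicate point is to make sure the distributional computation is legitimate component-by-component: since $\Delta$ here means $1\otimes\Delta$ acting diagonally on the $\Cplx^N$-valued functions, and $P = 1\otimes P_0$ with $P_0$ the scalar Helmholtz projection of Proposition \ref{helmholtz equation}, everything reduces to the scalar statement applied to each of the $N$ components of $u-v$. I would spell this out in one line. The identity $(1+D^*D)u = \xi$ really does give $(1\otimes(1-\Delta))u=\xi$ as distributions because $u \in \dom(D^*D)\subseteq W^{2,2}_{\loc}$ by Remark \ref{mapping_remark}, and similarly $v \in \dom(DD^*)\subseteq W^{2,2}_{\loc}$; on these local Sobolev spaces the distributional Laplacian agrees with $-D^*D$ and $-DD^*$ respectively. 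This is the crux, and it is essentially contained in Proposition \ref{prop_DD^*_and_D^*D_exact} already.

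I do not anticipate a serious obstacle here; the statement is a soft consequence of the explicit descriptions of $\dom(D^*D)$ and $\dom(DD^*)$ together with elliptic regularity. If anything, the "hard part" is purely notational — keeping track of the $\Cplx^N$-valued setting and confirming that $\dom(D^*D)$ and $\dom(DD^*)$ really do sit inside a common function space ($W^{1,2}_{\loc}$, or better $W^{2,2}_{\loc}$) on which the distributional Laplacian is unambiguous, so that the subtraction $u-v$ and the equation $(1-(1\otimes\Delta))(u-v)=0$ both make sense. Once that is granted, the conclusion is immediate from the definition of $P$.
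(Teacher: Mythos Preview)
Your proposal is correct and follows essentially the same argument as the paper: take $\xi\in L_2(\Omega,\Cplx^N)$, set $u=(1+D^*D)^{-1}\xi$ and $v=(1+DD^*)^{-1}\xi$, observe via Proposition \ref{prop_DD^*_and_D^*D_exact} that both satisfy $(1\otimes(1-\Delta))u=(1\otimes(1-\Delta))v=\xi$, subtract to get $(1\otimes(1-\Delta))T\xi=0$, and conclude $(1\otimes P)T\xi=T\xi$. Your additional remarks about $W^{2,2}_{\loc}$ regularity and the componentwise reduction are correct but more than the paper bothers to spell out; the paper simply invokes that $D^*D$ and $DD^*$ act as $-1\otimes\Delta$ on their domains and proceeds directly.
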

    \begin{proof}
        Let $u \in L_2(\Omega,\Cplx^N)$, and define:
        \begin{equation*}
            v = (1+D^*D)^{-1}u,\quad w = (1+DD^*)^{-1}u.
        \end{equation*}
        It follows that $v$ and $w$ are in the domains of $D^*D$ and $DD^*$ respectively. Since both $D^*D$ and $DD^*$ act as $-1\otimes \Delta$ on their respective domains, we have:
        \begin{equation*}
            (1\otimes (1-\Delta))v = (1\otimes (1-\Delta))w = u.
        \end{equation*}
        Thus,
        \begin{equation*}
            (1\otimes (1-\Delta))(v-w) = 0.
        \end{equation*}
        Hence,
        \begin{equation*}
            (1\otimes P)(v-w) = v-w.
        \end{equation*}
        Since $v-w = Tu$, we have:
        \begin{equation*}
            (1\otimes P)Tu = Tu,\quad u \in L_2(\Omega,\Cplx^N)
        \end{equation*}
        and this completes the proof.
    \end{proof}
    
    \begin{corollary}\label{difference of resolvents}
        Let $f \in C^\infty_c(\Omega)$. Then,
        \begin{equation*}
            (1\otimes M_f)((1+D^*D)^{-1}-(1+DD^*)^{-1}) \in \bigcap_{s > 0} \Lc_s.
        \end{equation*}
    \end{corollary}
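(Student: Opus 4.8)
The plan is to reduce everything to Lemma \ref{image of T}. That lemma says that $T := (1+D^*D)^{-1}-(1+DD^*)^{-1}$ factors as $T = (1\otimes P)T$, where $P$ is the orthogonal projection of $L_2(\Omega)$ onto the space of $L_2$-solutions of the Helmholtz equation $(1-\Delta)u=0$. Hence $(1\otimes M_f)T = (1\otimes M_f)(1\otimes P)T$, and since $T$ is bounded it is enough to prove that the ``smoothing'' operator $(1\otimes M_f)(1\otimes P)$ already lies in $\bigcap_{s>0}\Lc_s$. The mechanism is that, by Proposition \ref{helmholtz equation}, the range of $1\otimes P$ consists of smooth functions, so multiplication by a cutoff $f\in C^\infty_c(\Omega)$ produces an operator whose range is contained in $C^\infty_c(\Omega,\Cplx^N)$ — an ``infinitely smoothing, compactly supported'' operator — and such operators belong to every Schatten ideal.

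To make this precise I would fix a positive integer $k$ and proceed as follows. For $u\in L_2(\Omega,\Cplx^N)$ the function $(1\otimes M_f)(1\otimes P)u = f\cdot(Pu)$ is smooth and compactly supported in $\Omega$, hence lies in $C^\infty_c(\Omega,\Cplx^N)\subseteq\dom((D^*D)^k)$: indeed $C^\infty_c(\Omega,\Cplx^N)\subseteq\dom(D^*D)=W^{1,2}_0(\Omega,\Cplx^N)\cap W^{2,2}(\Omega,\Cplx^N)$ by Proposition \ref{prop_DD^*_and_D^*D_exact}, and $D^*D$ acts as $-1\otimes\Delta$, which maps $C^\infty_c(\Omega,\Cplx^N)$ into itself, so the containment propagates to all powers. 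Therefore $C_k := (1+D^*D)^k(1\otimes M_f)(1\otimes P)$ is everywhere defined on $L_2(\Omega,\Cplx^N)$; being the composition of the closed operator $(1+D^*D)^k$ with a bounded operator whose range lies in $\dom((1+D^*D)^k)$, it is closed, hence bounded by the closed graph theorem. Now choose $\phi\in C^\infty_c(\Omega)$ with $\phi\equiv 1$ on $\supp(f)$; since $f\cdot(Pu)$ is supported in $\supp(f)$ we have $(1\otimes M_\phi)(1\otimes M_f)(1\otimes P)=(1\otimes M_f)(1\otimes P)$, so that
\begin{equation*}
    (1\otimes M_f)(1\otimes P) = (1\otimes M_\phi)(1+D^*D)^{-k}C_k .
\end{equation*}
By Theorem \ref{dirichlet dimension}, $(1\otimes M_\phi)(1+D^*D)^{-k}\in\Lc_{d/(2k),\infty}$, hence $(1\otimes M_f)(1\otimes P)\in\Lc_{d/(2k),\infty}$ and consequently $(1\otimes M_f)T\in\Lc_{d/(2k),\infty}\subseteq\Lc_s$ for every $s>d/(2k)$. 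Letting $k\to\infty$ gives $(1\otimes M_f)T\in\bigcap_{s>0}\Lc_s$, as claimed.

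The only step I expect to require genuine care is the verification that $C_k$ is bounded, i.e. that the interior smoothness of $Pu$ (coming from elliptic regularity via Proposition \ref{helmholtz equation}) combines with the compact support of $f$ to place $f\cdot(Pu)$ in $\dom((D^*D)^k)$ for \emph{every} $k$ — closedness then forces boundedness via the closed graph theorem. This is exactly the argument already used in the proof of Theorem \ref{dirichlet dimension} (and implicit in Remark \ref{mapping_remark}), so it should go through with no new difficulty. The remaining manipulations — inserting the cutoff $\phi$, and using the ideal inclusion $\Lc_{d/(2k),\infty}\subseteq\Lc_s$ for $s>d/(2k)$ together with $\bigcap_{s>0}\Lc_s=\bigcap_{k}\Lc_{d/(2k),\infty}$ — are routine.
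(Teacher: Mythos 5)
Your proof is correct and follows essentially the same route as the paper's: both use Lemma \ref{image of T} to insert the projection $1\otimes P$, observe via Proposition \ref{helmholtz equation} that $(1\otimes M_f)(1\otimes P)$ has range in $C^\infty_c(\Omega,\Cplx^N)\subseteq\dom((D^*D)^k)$, deduce boundedness of $(1+D^*D)^k(1\otimes M_f)(1\otimes P)$ by the closed graph theorem, insert a cutoff $\phi$ with $\phi f=f$, and invoke Theorem \ref{dirichlet dimension} to place $(1\otimes M_\phi)(1+D^*D)^{-k}$ in $\Lc_{d/(2k),\infty}$. The only cosmetic difference is that you extract $(1\otimes M_f)(1\otimes P)\in\bigcap_{s>0}\Lc_s$ as an intermediate conclusion and then multiply by the bounded operator $T$, while the paper carries $T$ along in the displayed factorisation; the argument is the same.
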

    \begin{proof}
        According to Lemma \ref{image of T}, if $T = (1+D^*D)^{-1}-(1+DD^*)^{-1}$ then:
        \begin{equation*}
            (1\otimes M_f)T = (1\otimes M_f)(1\otimes P)T.
        \end{equation*}
        Select $\phi \in C^\infty_c(\Omega)$ such that $\phi f = f$. Then,
        \begin{equation*}
            (1\otimes M_f)T = (1\otimes M_{\phi})(1\otimes M_f)(1\otimes P)T.
        \end{equation*}
        By Proposition \ref{helmholtz equation}, the image of $1\otimes P$ consists of smooth functions, and hence the image of $(1\otimes M_f)(1\otimes P)$
        is contained in $C^\infty_c(\Omega,\Cplx^N)$. This in particular is in the domain of $(1+D^*D)^k$, for all $k$, and therefore:
        \begin{equation*}
            (1\otimes M_f)T = (1\otimes M_{\phi})(1+D^*D)^{-k}(1+D^*D)^k(1\otimes M_f)(1\otimes P)T.
        \end{equation*}
        Since $D^*D$ is closed, the operator $(1+D^*D)^k(1\otimes M_f)(1\otimes P)$ is closed and everywhere defined, and hence bounded. Now by Theorem \ref{dirichlet dimension}, $(1\otimes M_{\phi})(1+D^*D)^{-k} \in \Lc_{d/(2k),\infty}$.
        But since $k>0$ is arbitrary, we have:
        \begin{equation*}
            (1\otimes M_f)T \in \bigcap_{k>0} \Lc_{d/(2k),\infty},
        \end{equation*}
        Thus,
        \begin{equation*}
            (1\otimes M_f)((1+D^*D)^{-1}-(1+DD^*)^{-1}) \in \bigcap_{s > 0} \Lc_{s}.
        \end{equation*} 
    \end{proof}

    \begin{remark}\label{rem_model_factorisation}
        We note that the algebra $C^\infty_c(\Omega)$ has the factorisation property, that is, for all $a\in C^\infty_c(\Omega)$ there exists $a_1,a_2\in C^\infty_c(\Omega)$, such that $a=a_1a_2.$
    \end{remark}

    Combining Theorem \ref{dirichlet dimension}, Corollary \ref{difference of resolvents} and Remark \ref{rem_model_factorisation}, 
    we see that the triple $(C^\infty_c(\Omega),L_2(\Omega,\Cplx^N),D)$ will satisfy the assumptions
    of Hypothesis \ref{lotoreichik hypothesis}, save smoothness. 
    
    By our definition of smoothness of a pre-spectral triple (Definition \ref{smoothness definition}), it is sufficient to show that the operators 
    $$R^k_{|D|}(a),\, R^k_{|D^*|}(a),$$
    and 
    $$R^k_{|D|}(\partial(a)),\, R^k_{|D^*|}(\partial(a)),$$
    extend to bounded operators for every $k\in\Ntrl$ and every $a\in C^\infty_c(\Omega)$. Moreover,
    \begin{equation*}
        \partial(a) = \sum_{j=1}^d -i\gamma_j \partial_j(a).
    \end{equation*}
    Thus since $|D^*|^2$ and $|D|^2$ commute with each $\gamma_j$ and since $\partial_j(a) \in C^\infty_c(\Omega)$, it suffices
    to show only the following:   
    
    \begin{lemma}\label{big smoothness proof}
        For all $f \in C^\infty_c(\Omega)$ and $k\geq 0$ we have that:
        \begin{equation*}
            R^k_{|D^*|}(1\otimes M_f),\,R^k_{|D|}(1\otimes M_f)
        \end{equation*}
        have bounded extension on $L_2(\Omega,\Cplx^N)$.
    \end{lemma}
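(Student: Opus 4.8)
\emph{Proof proposal.}
The plan is to prove, by induction on $k$, an explicit formula for these operators on $\dom_\infty(|D|)$, resp. $\dom_\infty(|D^*|)$, namely
\begin{equation*}
    R^k_{|D|}(1\otimes M_f) = P_k\,(1+D^*D)^{-k/2}, \qquad R^k_{|D^*|}(1\otimes M_f) = P_k\,(1+DD^*)^{-k/2},
\end{equation*}
where $P_k$ is a \emph{single} differential operator, the \emph{same} one for $|D|$ and for $|D^*|$, of order at most $k$ and with $M_N(\Cplx)$-valued coefficients in $C^\infty_c(\Omega)$ supported inside $\supp f$, defined by $P_0 = 1\otimes M_f$ and $P_{j+1} = [-1\otimes\Delta,\,P_j]$. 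The point that makes this work is that, by interior elliptic regularity (Remark \ref{mapping_remark} and Proposition \ref{prop_DD^*_and_D^*D_exact}), $\dom_\infty(|D|)$ and $\dom_\infty(|D^*|)$ both consist of functions smooth in $\Omega$, so on these domains both $D^*D$ and $DD^*$ literally act as $-1\otimes\Delta$, and $P_j$ maps each of them into $C^\infty_c(\Omega,\Cplx^N)$, which lies in both. Since $(1+D^*D)^{-1/2}$ is a function of $D^*D$, it commutes with $D^*D$, so the inductive step computes on $\dom_\infty(|D|)$ to $R_{|D|}\bigl(P_k(1+D^*D)^{-k/2}\bigr)=[D^*D,P_k](1+D^*D)^{-(k+1)/2}=P_{k+1}(1+D^*D)^{-(k+1)/2}$, using that $[D^*D,P_k]$ agrees there with the differential operator $[-1\otimes\Delta,P_k]$; and one checks directly that $[-1\otimes\Delta,P_j]$ is again a differential operator of order $\le j+1$ whose coefficients are derivatives of those of $P_j$, hence still supported in $\supp f$. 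The case $|D^*|$ is identical with $DD^*$ in place of $D^*D$, and produces the very same $P_k$.

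The analytic heart, which I would isolate as a separate claim, is the interior regularity estimate: for $E^2\in\{D^*D,\,DD^*\}$, every $\phi\in C^\infty_c(\Omega)$ and every $k\ge 0$, the operator $(1\otimes M_\phi)(1+E^2)^{-k/2}$ is bounded from $L_2(\Omega,\Cplx^N)$ into $W^{k,2}(\Omega,\Cplx^N)$. This is proved by induction on $k$. The case $k=0$ is trivial. For $k=1$ one uses that $\dom\bigl((1+E^2)^{1/2}\bigr)$ equals $\dom(D)$, resp. $\dom(D^*)$, together with the ellipticity of $\Dd$ on $\Rl^d$ (applied after extending $(1\otimes M_\phi)(1+E^2)^{-1/2}u$ by zero) and the identities $\|(1+E^2)^{1/2}v\|^2=\|v\|^2+\|\Dd v\|^2$ and boundedness of $[\Dd,1\otimes M_\phi]$, to bound the $W^{1,2}$-norm by $\|u\|_{L_2}$; here Corollary \ref{cor_closure_D_0} and Theorem \ref{dom D^* description} supply the one $\loc$-derivative of regularity that is available. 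For $k\ge 2$, writing $v=(1+E^2)^{-k/2}u\in\dom(E^2)$, Proposition \ref{prop_DD^*_and_D^*D_exact} gives $(1-\Delta)v=(1+E^2)^{-(k-2)/2}u$ as distributions in $\Omega$, and interior elliptic regularity for the genuine differential operator $1-\Delta$, combined with the inductive hypothesis applied to $(1+E^2)^{-(k-2)/2}u$, yields the required $W^{k,2}$ bound.

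To conclude, fix $\phi\in C^\infty_c(\Omega)$ with $\phi\equiv 1$ on a neighbourhood of $\supp f$. Because every coefficient of $P_k$ is supported in $\supp f$, on $C^\infty(\Omega,\Cplx^N)$ one has $P_k = P_k\,(1\otimes M_\phi)$ (the Leibniz terms carrying a derivative of $\phi$ vanish against these coefficients). Hence on $\dom_\infty(|D|)$,
\begin{equation*}
    R^k_{|D|}(1\otimes M_f) = P_k\,(1\otimes M_\phi)\,(1+D^*D)^{-k/2},
\end{equation*}
and the right-hand side, viewed on $L_2$, factors through the bounded map $(1\otimes M_\phi)(1+D^*D)^{-k/2}\colon L_2(\Omega,\Cplx^N)\to W^{k,2}(\Omega,\Cplx^N)$ from the previous paragraph followed by $P_k\colon W^{k,2}(\Omega,\Cplx^N)\to L_2(\Omega,\Cplx^N)$, which is bounded since $P_k$ has order $\le k$ and bounded coefficients. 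As $\dom_\infty(|D|)$ is dense and contains $C^\infty_c(\Omega,\Cplx^N)$, and since $P_k(1+D^*D)^{-k/2}$ maps $\dom_\infty(|D|)$ into $C^\infty_c(\Omega,\Cplx^N)\subseteq\dom_\infty(|D|)$, running this as an induction verifies the intermediate domain conditions of Definition \ref{def_R^k} and shows $1\otimes M_f\in\dom_\infty(R_{|D|})$ with the stated bounded extension; the argument for $|D^*|$ is verbatim the same with $DD^*$ throughout, which establishes Lemma \ref{big smoothness proof}.

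The main obstacle is the interior regularity estimate of the second paragraph, and inside it the odd values of $k$: the half-power $(1+E^2)^{-1/2}$ is not the resolvent of a differential operator, so one cannot invoke elliptic regularity for it directly. The device above circumvents this by peeling off one factor at a time to reduce to the honest differential operator $1-\Delta$, using only that $\dom(D)$ and $\dom(D^*)$ carry one $\loc$-derivative of regularity and that $D^*D$, $DD^*$ act as $-1\otimes\Delta$ in the interior. A secondary point requiring care is the rigorous justification of the commutator cascade on $\dom_\infty(|D|)$ and $\dom_\infty(|D^*|)$, which is exactly where one needs that these domains are contained in $C^\infty(\Omega)$, so that all the operator identities collapse to classical differential-operator computations.
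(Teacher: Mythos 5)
Your proposal is correct and proves the lemma, but it organises the argument somewhat differently from the paper, so it is worth comparing.

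The paper reaches the same structural expression for $R^k$ — an order-$\le k$ differential operator with $C^\infty_c$ coefficients times $(1+|D^*|^2)^{-k/2}$ — but writes it out as a multi-indexed sum rather than your cleaner recursion $P_{j+1}=[-1\otimes\Delta,P_j]$. Your observation that the \emph{same} $P_k$ serves for both $|D|$ and $|D^*|$ is a nice normalisation that the paper does not exploit. The real divergence is in how the two arguments obtain boundedness. The paper embeds $M_\phi\,\dom(|D^*|^k)\subseteq W^{k,2}_0(\Omega)\hookrightarrow W^{k,2}(\Rl^d)$, inserts $(1-\Delta_{\Rl^d})^{k/2}$, and then invokes the closed graph theorem: $(1-\Delta_{\Rl^d})^{k/2}\iota\,(1\otimes M_\phi)(1+|D^*|^2)^{-k/2}$ is everywhere defined and closed, hence bounded. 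No quantitative estimate is ever produced. You, by contrast, prove the quantitative interior estimate $\|(1\otimes M_\phi)(1+E^2)^{-k/2}u\|_{W^{k,2}}\lesssim\|u\|_{L_2}$ directly, by a two-step bootstrap: the $k=1$ base case via extension by zero, $\|\nabla w\|_2=\|\Dd w\|_2$ on $\Rl^d$, and the commutator $[\Dd,M_\phi]$; and the inductive step $k\to k-2$ via interior elliptic regularity for the genuine second-order operator $1-\Delta$. This is more work than the paper's closed-graph shortcut, but it buys you something real: it makes transparent exactly where $\dom(|D^*|^k)\subseteq W^{k,2}_{\loc}(\Omega)$ for \emph{odd} $k$ comes from, a point the paper glosses over (Remark \ref{mapping_remark} covers only even powers of $|D|$, $|D^*|$, and the text then quietly uses it for half-integer exponents). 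Both approaches ultimately rest on the same facts — that $D^*D$ and $DD^*$ act as $-1\otimes\Delta$ in the interior, and that $M_\phi$ localises into $W^{k,2}_0$ — but the paper packages the analytic input into the closed graph theorem, while you make the elliptic estimate explicit. Your intermediate-domain verification (that $P_k$ lands in $C^\infty_c(\Omega,\Cplx^N)\subseteq\dom_\infty(|D|)\cap\dom_\infty(|D^*|)$) is also handled more explicitly than in the paper, which is appropriate given Definition \ref{def_R^k}.

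One small point of presentation worth tightening: in your $k\ge 2$ step you should state the interior elliptic estimate with the auxiliary cutoff made explicit, i.e.\ fix $\phi'\in C^\infty_c(\Omega)$ with $\phi'\equiv 1$ near $\supp\phi$ and use $\|M_\phi v\|_{W^{k,2}}\lesssim\|M_{\phi'}(1-\Delta)v\|_{W^{k-2,2}}+\|v\|_{L_2}$ before invoking the inductive hypothesis on $M_{\phi'}(1+E^2)^{-(k-2)/2}$; as written the cutoff bookkeeping is implicit, and since the whole point of this version of the argument is to be explicit about where the regularity comes from, it pays to spell that line out.
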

    \begin{proof}
        We first prove the statement involving $D^*$.
        
        Since $DD^*$ acts as $\sum_{j=1}^d -1\otimes \partial_j^2$ on its domain, the operator $R_{|D^*|}(1\otimes M_f)$ is computed as:
        \begin{align*}
            [|D^*|^2,1\otimes M_f](1+|D^*|^2)^{-1/2} &= \sum_{j=1}^d -[\partial_j^2,1\otimes M_f](1+|D^*|^2)^{-1/2}\\
                                            &= \sum_{j=1}^d -2\partial_j(1\otimes M_{\partial_j f})(1+|D^*|^2)^{-1/2}\\
                                            &\quad +\sum_{j=1}^d(1\otimes M_{\partial^2_jf})(1+|D^*|^2)^{-1/2}
        \end{align*}
        Thus for the higher iterated commutators, we have by induction that $R^k_{|D^*|}(1\otimes M_f)$ is of the form:
        \begin{equation}\label{big expression for R^k}
            R^{k}_{|D^*|}(1\otimes M_f) = \sum_{l=0}^k \sum_{n_{1},n_2,\ldots,n_l} \partial_{n_1}\cdots\partial_{n_l}(1\otimes M_{\phi_{n_1,\ldots,n_l}})(1+|D^*|^2)^{-k/2}
        \end{equation}
        where the sum has a finite number of nonzero terms and $\phi_{n_1,\ldots,n_l}$ are some functions in $C^\infty_c(\Omega)$. Therefore it suffices to show that:
        \begin{equation}\label{partials then M_f then resolvent}
            \partial_{n_1}\cdots\partial_{n_l}(1\otimes M_{\phi})(1+|D^*|^2)^{-k/2}
        \end{equation}
        has bounded extension, for $\phi \in C^{\infty}_c(\Omega)$ and $l\leq k$. We note that this operator is defined everywhere, since $(1+|D^*|^2)^{-k/2}$
        maps $L_2(\Omega,\Cplx^N)$ to $\dom(|D^*|^k)$, and then $1\otimes M_{\phi}$ maps $\dom(|D^*|^k)$ to $\dom(D^k)$, and on this subspace the operator $\partial_{n_1}\cdots\partial_{n_l}$
        is defined.
        
        We can show that the operator in \eqref{partials then M_f then resolvent} has bounded extension by considering $1\otimes M_{\phi}$ as a map from $\dom(|D^*|^k)$ to $W^{k,2}(\Rl^d,\Cplx^N)$, as follows: let $\iota$ denote the isometric injection of $W^{k,2}_0(\Omega)$ into $W^{k,2}(\Rl^d)$, and 
        let $\rho$ be the ``restriction to $\Omega$" map $W^{k,2}(\Rl^d)\to W^{k,2}(\Omega)$. 
        Then,
        \begin{equation*}
            \partial_{n_1}\cdots\partial_{n_l}(1\otimes M_{\phi})(1+|D^*|^2)^{-k/2} = \rho\partial_{n_1}\ldots\partial_{n_l}\iota (1\otimes M_\phi)(1+|D^*|^2)^{-k/2}.
        \end{equation*}
        This is possible because $\phi$ is compactly supported in $\Omega$, and so for $\xi \in \dom(|D^*|^k) \subseteq W^{k,2}(\Omega,\Cplx^N)$ we have that $M_{\phi}\xi \in W^{k,2}_0(\Omega,\Cplx^N)$. Now inserting a factor of $(1-\Delta_{\Rl^d})^{k/2}$, where $\Delta_{\Rl^d}$ is the Laplace operator on $\Rl^d$, we have:
        \begin{align*}
            \partial_{n_1}&\cdots\partial_{n_l}(1\otimes M_{\phi})(1+|D^*|^2)^{-k/2}\\
                          &= \rho\partial_{n_1}\cdots\partial_{n_k}(1-\Delta_{\Rl^d})^{-k/2}(1-\Delta_{\Rl^d})^{k/2}\iota(1\otimes M_{\phi})(1+|D^*|^2)^{-k/2}.
        \end{align*}
        By functional calculus, the operator $\partial_{n_1}\cdots \partial_{n_l}(1-\Delta_{\Rl^d})^{-k/2}$ is bounded on $L_2(\Rl^d)$, and so we will show that
        \begin{equation}\label{Omega to R^d}
            (1-\Delta_{\Rl^d})^{k/2}\iota(1\otimes M_{\phi})(1+|D^*|^2)^{-k/2}
        \end{equation}
        is bounded. To see this, we first note that the operator in \eqref{Omega to R^d} is everywhere defined, since $M_{\phi}$ maps $\dom(|D^*|^k)$
        to $W^{k,2}_0(\Omega)$, and $(1-\Delta_{\Rl^d})^{k/2}\iota$ is well-defined on this subspace. Next, the operator in \eqref{Omega to R^d} is closed, since $(1-\Delta_{\Rl^d})^{k/2}$ is closed and $M_{\phi}(1+|D^*|^2)^{-k/2}$ is bounded.
        Hence, by the closed graph theorem, the operator \ref{Omega to R^d} is bounded. This proves that each summand in \eqref{big expression for R^k} has bounded extension, and thus $R^k_{|D^*|}(1\otimes M_f)$ has bounded extension.
        
        Finally, to show that $R^k_{|D|}(1\otimes M_f)$ has bounded extension, we may use an identical argument since $M_{\phi}$ may be considered as mapping $\dom(|D|^n)$
        to $W^{k,2}(\Rl^d,\Cplx^N)$.
    \end{proof} 
    By combining the results above, we have verified that our model example indeed provides
    \pre-spectral triples satisfying Hypothesis \ref{lotoreichik hypothesis}.
    \begin{theorem}\label{examples_gives_pre_spectral_triple}
        Let $\Omega \subseteq \Rl^d$ be an arbitrary open set, and let $D$ be the Dirac operator with Dirichlet boundary conditions on $\Omega$, and let $H = L_2(\Omega,\Cplx^N)$,
        and let $\Ac = C^\infty_c(\Omega)$ act on $H$ as $1\otimes M_f, f\in C^\infty_c(\Omega)$.
        
        Then $(\Ac,H,D)$ is a smoothly $d$-dimensional \pre-spectral triple satisfying Hypothesis \ref{lotoreichik hypothesis}.
    \end{theorem}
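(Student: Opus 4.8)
The theorem will follow by assembling the results of this section; I would verify in turn that $(\Ac,H,D)$ is a \pre-spectral triple, that it is smooth and smoothly $d$-dimensional, and that the factorisation and resolvent-difference clauses of Hypothesis~\ref{lotoreichik hypothesis} hold, writing throughout $\Ac=C^\infty_c(\Omega)$ acting on $H=L_2(\Omega,\Cplx^N)$ by $f\mapsto 1\otimes M_f$ and $p:=d\geq 2$. Axioms (i),(ii) of Definition~\ref{\pre-spectral triple definition} are immediate and (iii) is Corollary~\ref{cor_closure_D_0}. For (iv) I would use Theorem~\ref{dom D^* description}: since $\dom(D^*)=\{u\in W^{1,2}_{\loc}(\Omega,\Cplx^N)\cap L_2(\Omega,\Cplx^N):\Dd u\in L_2\}$ with $D^*u=\Dd u$, and $f\in C^\infty_c(\Omega)$, the product $(1\otimes M_f)u$ is a $W^{1,2}$-function with compact support, hence lies in $W^{1,2}_0(\Omega,\Cplx^N)=\dom(D)$, while the Leibniz rule identifies $[\Dd,1\otimes M_f]=\sum_j-i\gamma_j\otimes M_{\partial_j f}=:\partial(1\otimes M_f)$, which is bounded. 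Axiom (v) comes from Theorem~\ref{dirichlet dimension} with $k=1$: if $T:=(1\otimes M_f)(1+D^*D)^{-1/2}$ then $TT^*=(1\otimes M_f)(1+D^*D)^{-1}(1\otimes M_{\bar f})\in\Lc_{d/2,\infty}$, so $TT^*$ and hence $T$ is compact.

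Since $\partial(1\otimes M_f)=\sum_j-i\gamma_j\otimes M_{\partial_j f}$ is again a finite combination of multiplication operators by functions in $C^\infty_c(\Omega)$, composed with the constant matrices $\gamma_j$ which commute with $D^*D$ and $DD^*$ and hence with $|D|$ and $|D^*|$, the smoothness and summability conditions reduce---exactly as noted before Lemma~\ref{big smoothness proof}---to the corresponding statements about the operators $1\otimes M_f$, $f\in C^\infty_c(\Omega)$. That $1\otimes M_f$ and $\partial(1\otimes M_f)$ carry $\dom((D^*)^n)$ into $\dom(D^n)$ is elliptic regularity together with $W^{n,2}_0\subseteq\dom(D^n)$ (compare Remark~\ref{mapping_remark}), and the boundedness of $R^k_{|D|}(1\otimes M_f)$ and $R^k_{|D^*|}(1\otimes M_f)$ is precisely Lemma~\ref{big smoothness proof}; thus $(\Ac,H,D)$ is a smooth \pre-spectral triple.

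For smooth $d$-dimensionality I must show $R^k_{|D^*|}(1\otimes M_f)(1+|D^*|^2)^{-d/2}\in\Lc_{1,\infty}$ and the analogue with $|D|$, for all $k\geq 0$. Using the structural formula \eqref{big expression for R^k} from the proof of Lemma~\ref{big smoothness proof} and $|D^*|^2=DD^*$, the first operator is a finite sum of terms
\[
    \partial_{n_1}\cdots\partial_{n_l}(1\otimes M_\phi)(1+DD^*)^{-(k+d)/2},\qquad \phi\in C^\infty_c(\Omega),\quad l\leq k,
\]
and my plan is to show each such term lies in $\Lc_{1,\infty}$ by combining the argument of Theorem~\ref{dirichlet dimension} with the regularity-shifting device in the proof of Lemma~\ref{big smoothness proof}. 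Concretely: pick a bounded open $U'$ with smooth boundary, $\supp\phi\subset U'$ and $\overline{U'}$ compact in $\Omega$; since differentiating a $W^{k+d,2}_0(U')$-function $l\leq k$ times produces a $W^{k+d-l,2}_0(U')$-function and $W^{k+d-l,2}_0(U')\subseteq\dom((1-\Delta_{D,U'})^{(k+d-l)/2})$, elliptic regularity shows that the displayed operator maps $L_2(\Omega)$ into $\dom((1-\Delta_{D,U'})^{(k+d-l)/2})$, so $(1-\Delta_{D,U'})^{(k+d-l)/2}$ composed with it is everywhere defined and closed, hence bounded; writing the displayed operator as $(1-\Delta_{D,U'})^{-(k+d-l)/2}$ times this bounded operator and using the Weyl asymptotics $(1-\Delta_{D,U'})^{-1}\in\Lc_{d/2,\infty}$ (\cite[Theorem 12.14]{Schmudgen-2012}), one gets $(1-\Delta_{D,U'})^{-(k+d-l)/2}\in\Lc_{d/(k+d-l),\infty}\subseteq\Lc_{1,\infty}$ because $k+d-l\geq d$, so the product is in $\Lc_{1,\infty}$. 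The $|D|$-analogue is the same with $|D|^2=D^*D$ the Dirichlet Laplacian itself. This is the step where I expect the real work to lie---in particular the honest treatment of the half-integer power $d/2$ when $d$ is odd (which is why one passes to a smooth bounded $U'$ and uses the full Weyl law, rather than only the integer-power statement of Theorem~\ref{dirichlet dimension}) and the careful bookkeeping of the extra derivatives coming from iterating $R_{|D^*|}$.

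Finally, factorisation $\Ac=\Ac\cdot\Ac$ is Remark~\ref{rem_model_factorisation}, and, since $p=d\geq 2>1$, only the $p>1$ clause of Hypothesis~\ref{lotoreichik hypothesis} remains: for $0\leq a=1\otimes M_f\in\Ac$ and $T:=(1+D^*D)^{-1}-(1+DD^*)^{-1}$ one has $a(1+DD^*)^{-1}a-a(1+D^*D)^{-1}a=-(1\otimes M_f)\,T\,(1\otimes M_f)$, and Corollary~\ref{difference of resolvents} gives $(1\otimes M_f)T\in\bigcap_{s>0}\Lc_s$, so $(1\otimes M_f)T(1\otimes M_f)\in\bigcap_{s>0}\Lc_s\subseteq\Lc_{d/4,\infty}$. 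Collecting these verifications proves the theorem.
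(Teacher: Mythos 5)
Your verification of the pre-spectral-triple axioms, smoothness, factorisation, and the resolvent-difference clause all track the paper closely; the genuine divergence is in how you establish smooth $d$-dimensionality. You propose a direct PDE argument: expand $R^k_{|D^*|}(1\otimes M_f)(1+|D^*|^2)^{-d/2}$ via the structural formula \eqref{big expression for R^k}, localise to a smooth bounded $U'$, use the Dirichlet Laplacian $\Delta_{D,U'}$ on $U'$ together with its Weyl asymptotics $\bigl(1-\Delta_{D,U'}\bigr)^{-1}\in\Lc_{d/2,\infty}$, and then let the closed-graph theorem absorb the bounded factor. The paper instead first proves only the weaker $\Lc_{d,\infty}$ statements $R^k_{|D|}(1\otimes M_f)(1+D^*D)^{-1/2}\in\Lc_{d,\infty}$ (and likewise for $|D^*|$), by inserting the cut-off $\phi f=f$ and applying Araki--Lieb--Thirring, and then invokes the abstract factorisation machinery of Section~\ref{sufficient conditions section} (Corollary~\ref{cor_R_diff_powers_diff_Schatten} applied to the \doublee) to upgrade $\Lc_{d,\infty}$ with power $-1/2$ to $\Lc_{1,\infty}$ with power $-d/2$. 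Your route is more self-contained at the level of the model example and avoids the detour through the double, but it requires you to make honest sense of $\dom(|D^*|^{k+d})\subseteq W^{k+d,2}_{\loc}$ and $W^{n,2}_0(U')\subseteq\dom\bigl((1-\Delta_{D,U'})^{n/2}\bigr)$ for odd $n$, i.e.\ half-integer elliptic regularity and the matching of fractional-power domains with Sobolev spaces --- you correctly flag this as where the real work lies. The paper's route sidesteps those fractional-power subtleties entirely (everything there is done at the level of single resolvents), at the price of relying on the general theory built for spectral triples satisfying the factorisation property. Both arguments are correct; yours is a reasonable and essentially equivalent-strength alternative, provided those elliptic-regularity inclusions are spelled out.
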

    \begin{proof}        
        As has already been discussed, we have that $a \in \Ac$ maps $\dom(D^*)$ to $\dom(D)$.
        Due to Theorem \ref{dirichlet dimension}, we have:
        \begin{equation*}
            (1\otimes M_f)(1+D^*D)^{-1} \in \Lc_{d/2,\infty}.
        \end{equation*}
        and
        \begin{equation*}
            (1\otimes M_f)(1+DD^*)^{-1} \in \Lc_{d/2,\infty},
        \end{equation*}
        
        By an application of the Araki-Lieb-Thirring inequality \eqref{ALT inequality}, it follows that:
        \begin{equation*}
            (1\otimes M_f)(1+D^*D)^{-1/2},\,(1\otimes M_f)(1+DD^*)^{-1/2} \in \Lc_{d,\infty}.
        \end{equation*}
        Since for any $f \in C^\infty_c(\Omega)$, we can find $\phi \in C^\infty_c(\Omega)$ such that $[D,1\otimes M_f] = [D,1\otimes M_f](1\otimes M_{\phi})$, 
        it follows that $(C^\infty_c(\Omega),L_2(\Omega,\Cplx^N),D)$ is a $p$-dimensional \pre-spectral triple. Smoothness of the \pre-spectral triple
        is an immediate consequence of Lemma \ref{big smoothness proof}.
        
        Moreover, for any $f \in C^\infty_c(\Omega)$ we can find $\phi \in C^\infty_c(\Omega)$ such that $R^k_{|D^*|}(1\otimes M_f) = (1\otimes M_{\phi})R^k_{|D^*|}(1\otimes M_f)$
        and $R^k_{|D|}(1\otimes M_f) = (1\otimes M_\phi)R^k_{|D|}(1\otimes M_f)$. Hence, for all $f \in C^\infty_c(\Omega)$ and $k \geq 0$ we have:
        \begin{equation*}
            (1+D^*D)^{-1}R^k_{|D|}(1\otimes M_f),\, (1+DD^*)^{-1}R^k_{|D^*|}(1\otimes M_f) \in \Lc_{d/2,\infty}
        \end{equation*}
        Then commuting $(1+D^*D)^{-1}$ with $R^k_{|D|}(1\otimes M_f)$ yields:
        \begin{align*}
            (1+D^*D)^{-1}R^k_{|D|}(1\otimes M_f) &= -(1+D^*D)^{-1}R^{k+1}_{|D|}(1\otimes M_f)(1+D^*D)^{-1}\\
                                                 &\quad +R^k_{|D|}(1\otimes M_f)(1+D^*D)^{-1}\\
                                                 &= -(1+D^*D)^{-1}M_{\phi}R^{k+1}_{|D|}(1\otimes M_f)(1+D^*D)^{-1}\\
                                                 &\quad +R^k_{|D|}(1\otimes M_f)(1+D^*D)^{-1}.
        \end{align*}
        Thus,
        \begin{equation*}
            R^{k}_{|D|}(1\otimes M_f)(1+D^*D)^{-1} \in \Lc_{d/2,\infty}
        \end{equation*}
        and applying the Araki-Lieb-Thirring inequality:
        \begin{equation*}
            R^k_{|D|}(1\otimes M_f)(1+D^*D)^{-1/2} \in \Lc_{d,\infty}.
        \end{equation*}
        By an identical proof:
        \begin{equation*}
            R^k_{|D^*|}(1\otimes M_f)(1+DD^*)^{-1/2} \in \Lc_{d,\infty}.
        \end{equation*}        
        an argument similar to that of Corollary \ref{sufficient conditions for double to satisfy 1.2.1} shows that the \double satisfies
        Hypothesis \ref{main assumption}, and hence $(C^\infty_c(\Omega),L_2(\Omega,\Cplx^N),D)$ is smoothly $p$-dimensional.
        
        Finally, the remainder of Hypothesis \ref{lotoreichik hypothesis} follows immediately from Corollary \ref{difference of resolvents},
    \end{proof}
    Since all of the conditions of Theorem \ref{character theorem} are verified, it follows that our version of the Character theorem
    holds for the model example $(C^\infty_c(\Omega),L_2(\Omega,\Cplx^N),D)$.
    
\section*{Appendices}


\renewcommand{\thesection}{A}
    
\section{Proof of Lemma \ref{lambda to delta}}\label{lambda to delta appendix}
    Here we include the proof of Lemma \ref{lambda to delta}. For this appendix $(\Bc,K,T)$ is a spectral triple, $x \in \Bc \cup \partial_0(\Bc)$ and $p \geq 1$.
    
    We recall the notation $|T_0| = (1+T^2)^{1/2}$ and $\delta_0(x) = [|T_0|,x]$.
    One direction of the equivalence is not difficult:
    \begin{lemma}
        If for all $k\geq 0$ we have:
        \begin{equation*}
            \delta_0^k(x)(1+T^2)^{-1/2} \in \Lc_{p,\infty}
        \end{equation*}
        then, for all $k\geq 0$,
        \begin{equation*}
            R_T^k(x)(1+T^2)^{-1/2} \in \Lc_{p,\infty}
        \end{equation*}
    \end{lemma}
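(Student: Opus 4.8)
The plan is to reduce $R_T^k(x)$ to a finite linear combination of terms $\delta_0^{j}(x)|T_0|^{-m}$ in which the power $m$ is large enough to absorb one factor $(1+T^2)^{-1/2}$ and still leave a bounded operator, and then to invoke the hypothesis together with the ideal property of $\Lc_{p,\infty}$. First I would recall that $|T_0|^2 = 1+T^2$, so $R_T(x) = [|T_0|^2,x]|T_0|^{-1}$; expanding the commutator by the Leibniz rule and using $\delta_0(x) = [|T_0|,x]$ gives
\begin{equation*}
    R_T(x) = 2\delta_0(x) + \delta_0^2(x)|T_0|^{-1}
\end{equation*}
(cf.\ the computation in the proof of Corollary~\ref{cor_R_diff_powers_diff_Schatten}).

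The key structural point I would use is that $\delta_0$ commutes with right multiplication by $|T_0|^{-1}$: for a bounded operator $y$ preserving $\dom_\infty(|T_0|)$ one has $\delta_0(y|T_0|^{-1}) = [|T_0|,y]|T_0|^{-1} = \delta_0(y)|T_0|^{-1}$, since $|T_0|$ commutes with $|T_0|^{-1}$. Writing $\rho$ for the operation of right multiplication by $|T_0|^{-1}$, we then have $R_T = 2\delta_0 + \delta_0^2\rho$ with $\delta_0$ and $\rho$ commuting on the relevant operators, so the binomial theorem yields
\begin{equation*}
    R_T^k = \sum_{l=0}^k \binom{k}{l} 2^l \delta_0^{2k-l}\rho^{k-l},
\end{equation*}
and hence, applied to $x$,
\begin{equation*}
    R_T^k(x) = \sum_{l=0}^k \binom{k}{l} 2^l \delta_0^{2k-l}(x)|T_0|^{-(k-l)}.
\end{equation*}
Here smoothness of $(\Bc,K,T)$ guarantees that $x$ lies in the domain of all the iterated operations involved, so the manipulations are legitimate.

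It then remains to multiply on the right by $(1+T^2)^{-1/2} = |T_0|^{-1}$ and rearrange powers of $|T_0|$:
\begin{equation*}
    R_T^k(x)(1+T^2)^{-1/2} = \sum_{l=0}^k \binom{k}{l} 2^l \big(\delta_0^{2k-l}(x)|T_0|^{-1}\big)|T_0|^{-(k-l)}.
\end{equation*}
For $0 \le l \le k$ the operator $|T_0|^{-(k-l)}$ is bounded, and by hypothesis $\delta_0^{2k-l}(x)|T_0|^{-1} = \delta_0^{2k-l}(x)(1+T^2)^{-1/2} \in \Lc_{p,\infty}$; since $\Lc_{p,\infty}$ is an ideal and the sum is finite, the left-hand side lies in $\Lc_{p,\infty}$. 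I do not expect a genuine obstacle here: this is the ``easy'' direction of Lemma~\ref{lambda to delta}, resting only on the Leibniz rule, the commutation of $\delta_0$ with $\rho$, and the ideal property of $\Lc_{p,\infty}$. The real difficulties — including the need for a Bochner-integration argument in the $p=1$ case — arise only in the converse implication treated in the remainder of this appendix.
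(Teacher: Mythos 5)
Your proof is correct and follows essentially the same route as the paper's: the identity $R_T(x)=2\delta_0(x)+\delta_0^2(x)|T_0|^{-1}$, the binomial expansion $R_T^k(x)=\sum_{l=0}^k\binom{k}{l}2^l\delta_0^{2k-l}(x)|T_0|^{-(k-l)}$, and the observation that each summand times $|T_0|^{-1}$ lies in $\Lc_{p,\infty}$ by the hypothesis and the ideal property. Your explicit remark that $\delta_0$ commutes with right multiplication by $|T_0|^{-1}$ is a detail the paper leaves implicit, but the argument is the same.
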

    \begin{proof}
        By the Leibniz rule,
        \begin{align*}
                  R_T(x) &= [|T_0|^2,x]|T_0|^{-1}\\
                         &= [|T_0|,x]+|T_0|[|T_0|,x]|T_0|^{-1}\\
                         &= 2\delta_0(x)+\delta^2(x)|T_0|^{-1}.
        \end{align*}
        Then by the binomial theorem,
        \begin{equation*}
            R_T^k(x) = \sum_{l=0}^k \binom{k}{l} 2^l\delta_0^{2k-l}(x)|T_0|^{-k+l}.
        \end{equation*}
        So multiplying on the right by $|T_0|^{-1}$,
        \begin{equation*}
            R_T^k(x)|T_0|^{-1} = \sum_{l=0}^k \binom{k}{l}2^l\delta_0^{2k-l}(x)|T_0|^{-k+l-1}
        \end{equation*}
        Since by assumption each summand is in $\Lc_{p,\infty}$, it follows that $R_T^k(x)$ is in $\Lc_{p,\infty}$.     
    \end{proof}
    
    So now we focus on proving the reverse implication in Lemma \ref{lambda to delta}. The main difficulty that we have is with the $p=1$ case due to the lack of a Banach norm on $\Lc_{1,\infty}$. Instead,
    we use recently developed operator integration techniques from \cite{LeSZ2}. The following is (a special case of) \cite[Corollary 3.7]{LeSZ2}: Recall that for a closed bounded interval $I$ and a quasi-Banach space $X$, the space
    $C^{2}(I,X)$ is the set of twice continuously differentiable functions $F:I\to X$, equipped with the quasi-norm:
    \begin{equation*}
        \|F\|_{C^2(I,X)} = \max\{\sup_{t \in I} \|F(t)\|_X,\,\sup_{t \in I} \|F'(t)\|_X,\,\sup_{t \in I} \|F''(t)\|_X\}.
    \end{equation*}
    Let $p \in (1/2,\infty)$ and let $F \in C^2([0,\infty),\Lc_{p,\infty})$. If
    \begin{equation*}
        \sum_{j=0}^\infty \|F\|_{C^2([j,j+1],\Lc_{p,\infty})}^{\frac{p}{p+1}} < \infty
    \end{equation*}
    then $\int_0^\infty F(\lambda)\,d\lambda \in \Lc_{p,\infty}$, and there is a constant $c_p$ so that we have a quasi-norm bound:
    \begin{equation*}
        \left\|\int_0^\infty F(\lambda)\,d\lambda\right\|_{p,\infty} \leq c_p\left(\sum_{j=0}^\infty \|F\|_{C^2([j,j+1],\Lc_{p,\infty})}^{\frac{p}{p+1}}\right)^{\frac{p+1}{p}}.
    \end{equation*}
    
    \begin{lemma}\label{lambda to delta auxiliary lemma}
        Let $X \in \Lc_\infty(K)$ be a linear operator such that $R_T(X)$, $R^2_T(X)$, $R^3_T(X)$ and $R^4_T(X)$ are in  $\Lc_{p,\infty}$, with $p \geq 1$. Then
        $\delta_0(X) \in \Lc_{p,\infty}$.
    \end{lemma}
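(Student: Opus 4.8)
The plan is to represent $|T_0|=(1+T^2)^{1/2}$ by a resolvent integral and repeatedly commute $X$ (and then $R_T(X)$, $R_T^2(X)$, $R_T^3(X)$) past the resolvents, peeling off at each stage a term that manifestly lies in $\Lc_{p,\infty}$ and leaving a remainder with enough decay to be controlled by the operator-integration result quoted just above. Write $A=1+T^2=|T_0|^2$ (so $\mathrm{spec}(A)\subseteq[1,\infty)$) and $R_\lambda=(A+\lambda)^{-1}$ for $\lambda\ge0$. Starting from $A^{1/2}=\tfrac1\pi\int_0^\infty AR_\lambda\,\lambda^{-1/2}\,d\lambda$, used via truncation at $\Lambda\to\infty$, together with $AR_\lambda=1-\lambda R_\lambda$, $[R_\lambda,Y]=-R_\lambda[A,Y]R_\lambda$, and the identity $[A,Y]=R_T(Y)A^{1/2}$ valid on $\dom_\infty(T)$, one obtains for $\xi$ in the dense subspace $\dom_\infty(T)$ (a core for $|T_0|$)
\[
\delta_0(X)\xi=[A^{1/2},X]\xi=\frac1\pi\int_0^\infty\lambda^{1/2}R_\lambda\,R_T(X)\,A^{1/2}R_\lambda\,\xi\,d\lambda .
\]
I would then commute $R_T(X)$ to the left using $R_\lambda R_T(X)=R_T(X)R_\lambda-R_\lambda R_T^2(X)A^{1/2}R_\lambda$, and iterate, at stage $k$ moving $R_T^k(X)$ to the outside for $k=1,2,3$. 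Each stage splits the current integral into a ``clean'' term $R_T^k(X)\int_0^\infty\lambda^{1/2}A^{k/2}R_\lambda^{k+1}\,d\lambda$ — and the scalar integral $\int_0^\infty\lambda^{1/2}a^{k/2}(a+\lambda)^{-k-1}\,d\lambda$ is a constant multiple of $a^{(1-k)/2}$, so this term is $R_T^k(X)$ times a bounded function of $A$ — plus a remainder integral which, using elementary bounds such as $\tfrac{a}{(a+\lambda)^2}\le\tfrac1{4\lambda}$, converges absolutely in operator norm. All identities, first established on $\dom_\infty(T)$, become genuine operator identities since both sides are bounded. The net result is a formula
\[
\delta_0(X)=c_1 R_T(X)+c_2 R_T^2(X)A^{-1/2}+c_3 R_T^3(X)A^{-1}+I_4,\qquad I_4=-\frac1\pi\int_0^\infty\lambda^{1/2}R_\lambda\,R_T^4(X)\,(A^{1/2}R_\lambda)^4\,d\lambda,
\]
with absolute constants $c_1,c_2,c_3$.

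Since $R_T(X),R_T^2(X),R_T^3(X)\in\Lc_{p,\infty}$ and $A^{-1/2},A^{-1}$ are bounded, the first three terms are in $\Lc_{p,\infty}$, so everything reduces to showing $I_4\in\Lc_{p,\infty}$. The integrand $F(\lambda)=-\tfrac1\pi\lambda^{1/2}R_\lambda R_T^4(X)(A^{1/2}R_\lambda)^4$ satisfies $\|F(\lambda)\|_{p,\infty}\lesssim\|R_T^4(X)\|_{p,\infty}(1+\lambda)^{-5/2}$ (from $\|R_\lambda\|_\infty\le(1+\lambda)^{-1}$ and $\|(A^{1/2}R_\lambda)^4\|_\infty=\sup_{a\ge1}\tfrac{a^2}{(a+\lambda)^4}\le\tfrac1{16\lambda^2}$), but $F$ is only H\"older-$\tfrac12$, not $C^2$, at $\lambda=0$, so the quoted lemma does not apply directly. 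I would fix this by the substitution $\lambda=\mu^2$, which rewrites $I_4=\int_0^\infty G(\mu)\,d\mu$ with $G(\mu)=-\tfrac2\pi\mu^2 R_{\mu^2}R_T^4(X)(A^{1/2}R_{\mu^2})^4$; because $\mu\mapsto(A+\mu^2)^{-1}$ is operator-norm smooth on $[0,\infty)$ and differentiating the resolvent factors only improves the decay, $G$ is a $C^2$ (indeed smooth) $\Lc_{p,\infty}$-valued function with $\|G\|_{C^2([j,j+1],\Lc_{p,\infty})}\lesssim\|R_T^4(X)\|_{p,\infty}(1+j)^{-4}$, so $\sum_j\|G\|_{C^2([j,j+1],\Lc_{p,\infty})}^{p/(p+1)}<\infty$ for every $p\ge1$ (as $\tfrac{4p}{p+1}>1$). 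The operator-integration corollary of \cite{LeSZ2} then yields $I_4=\int_0^\infty G(\mu)\,d\mu\in\Lc_{p,\infty}$, and hence $\delta_0(X)\in\Lc_{p,\infty}$.

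The main obstacle is exactly the one flagged at the start of the appendix: in the $p=1$ case $\Lc_{1,\infty}$ carries no Banach norm, so one cannot simply Bochner-integrate the $\Lc_{1,\infty}$-valued integrand and must instead manufacture the $C^2$-regularity and summability hypotheses of the lemma from \cite{LeSZ2}. This is why it is necessary both to reparametrise (to obtain $C^2$-behaviour at the origin) and to iterate the commutation all the way to the \emph{fourth} operator $R_T^4(X)$, which is what produces the $(1+\lambda)^{-5/2}$ decay making the relevant series converge for all $p\ge1$; stopping at $R_T^2(X)$ or $R_T^3(X)$ would only suffice for $p>2$ or $p>1$ respectively. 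A more routine point is justifying the algebraic manipulations with the only conditionally convergent initial integral; I would do this by working throughout with truncated integrals on $\dom_\infty(T)$, using that the hypothesis $R_T^4(X)\in\Lc_{p,\infty}$ forces $X\in\dom(R_T^4)$, so that $X,R_T(X),R_T^2(X),R_T^3(X)$ all map $\dom_\infty(T)$ into itself, and passing to the limit only at the end.
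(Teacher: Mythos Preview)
Your proposal is correct and follows essentially the same approach as the paper: the same resolvent integral for $|T_0|$, the same iterated commutation peeling off $R_T(X)$, $R_T^2(X)|T_0|^{-1}$, $R_T^3(X)|T_0|^{-2}$, and the same appeal to the operator-integration result of \cite{LeSZ2} for the remaining $R_T^4(X)$ integral. The one substantive difference is your reparametrisation $\lambda=\mu^2$ to secure $C^2$-regularity at the origin; the paper applies \cite[Corollary~3.7]{LeSZ2} directly to $F(\lambda)=\lambda^{1/2}(\lambda+T_0^2)^{-1}R_T^4(X)\,T_0^4(\lambda+T_0^2)^{-4}$ and estimates $\|F\|_{C^2([j,j+1],\Lc_{p,\infty})}\le C(j+1)^{-5/2}$ for $j\ge0$, which as you correctly observe is not quite justified on $[0,1]$ because the $\lambda^{1/2}$ factor makes $F'$ blow up like $\lambda^{-1/2}$ at $0$ --- your substitution is the clean way to repair this.
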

    \begin{proof}
        Recall that $T_0^2 = 1+T^2$. Starting from the integral formula:
        \begin{equation*}
            (1+T^2)^{1/2} = \frac{1}{\pi}\int_0^\infty \frac{1+T^2}{1+\lambda+T^2}\lambda^{-1/2}\,d\lambda
        \end{equation*}
        (see e.g. \cite[Remark 3]{CP})
        we have:{\highlight
        \begin{align*}
            \delta_0(X) &= \frac{1}{\pi} \int_0^\infty \left[\frac{T_0^2}{\lambda+T_0^2},X\right]\lambda^{-1/2}\,d\lambda\\
                        &= \frac{1}{\pi} \int_0^\infty \left[1-\frac{\lambda}{\lambda+T_0^2},X\right]\lambda^{-1/2}\,d\lambda\\
                        &= \frac{1}{\pi} \int_0^\infty \lambda^{1/2}\frac{1}{\lambda+T_0^2}[T_0^2,X]\frac{1}{\lambda+T_0^2}\,d\lambda\\
                        &= \frac{1}{\pi} \int_0^\infty \lambda^{1/2}\frac{1}{\lambda+T_0^2}R_T(X)\frac{T_0}{\lambda+T_0^2}\,d\lambda\\
                        &= \frac{1}{\pi} \int_0^\infty -\lambda^{1/2}\frac{1}{\lambda+T_0^2}R^2_T(X)\frac{T_0^2}{(\lambda+T_0^2)^2} + \lambda^{1/2}R_T(X)\frac{T_0}{(\lambda+T_0^2)^2}\,d\lambda\\
                        &= \frac{1}{2}R_T(X)-\frac{1}{\pi}\int_0^\infty \lambda^{1/2}\frac{1}{\lambda+T_0^2}R^2_T(X)\frac{T_0^2}{(\lambda+T_0^2)^2}\,d\lambda\\
                        &= \frac{1}{2}R_T(X)-\frac{1}{8}R^2_T(X)T_0^{-1}+\frac{1}{\pi}\int_0^{\infty} \lambda^{1/2}\frac{1}{\lambda+T_0^2}R^3_T(X)\frac{T_0^3}{(\lambda+T_0^2)^3}\,d\lambda\\
                        &= \frac{1}{2}R_T(X)-\frac{1}{8}R^2_T(X)T_0^{-1}+\frac{1}{16}R^3_T(X)T_0^{-2}-\frac{1}{\pi}\int_0^\infty \lambda^{1/2}\frac{1}{\lambda+T_0^2}R^4_T(X)\frac{T_0^4}{(\lambda+T_0^2)^4}\,d\lambda.
        \end{align*}
        We now examine the latter integral with a view to applying \cite[Corollary 3.7]{LeSZ2} in order to show that it is in $\Lc_{p,\infty}$. Define
        \begin{equation*}
            F(\lambda) = A(\lambda)R^4_T(X)B(\lambda),\quad \lambda \in [0,\infty)
        \end{equation*}
        where
        \begin{align*}
            A(\lambda) &:= \frac{\lambda^{1/2}}{\lambda+T_0^2}\\
            B(\lambda) &:= \frac{T_0^4}{(\lambda+T_0^2)^4}.
        \end{align*}
        We will use the estimate (see e.g. \cite[Proposition 3.8]{LeSZ2}):
        \begin{equation*}
            \|F\|_{C^2([j,j+1],\Lc_{p,\infty})} \leq \|A\|_{C^2([j,j+1],\Lc_\infty)}\|R^2_T(X)\|_{\Lc_{p,\infty}}\|B\|_{C^2([j,j+1],\Lc_\infty)}.
        \end{equation*}
        Let us estimate the $C^2$ norms of $A$ and $B$. First, for $A$ we have:
        \begin{align*}
            A'(\lambda)  &= \frac{1}{2\lambda^{1/2}(\lambda+T_0^2)}-\frac{\lambda^{1/2}}{(\lambda+T_0^2)^2}\\
            A''(\lambda) &= -\frac{1}{4\lambda^{3/2}(\lambda+T_0^2)}-\frac{1}{\lambda^{1/2}(\lambda+T_0^2)^2}+2\frac{\lambda^{1/2}}{(\lambda+T_0^2)^3}
        \end{align*}
        and for $B$,
        \begin{align*}
            B'(\lambda)  &= -\frac{4T_0^4}{(\lambda+T_0^2)^5}\\
            B''(\lambda) &= \frac{20T_0^4}{(\lambda+T_0^2)^6}.
        \end{align*}
        For fixed $\lambda \in (0,\infty)$, we then have:
        \begin{equation*}
            \|A(\lambda)\|_{\infty} \leq \lambda^{-1/2},\;\|A'(\lambda)\|_\infty  \leq C_1\lambda^{-3/2},\;\|A''(\lambda)\|_\infty \leq C_2\lambda^{-5/2}
        \end{equation*}
        and
        \begin{equation*}
            \|B(\lambda)\|_\infty \leq \lambda^{-2},\; \|B'(\lambda)\|_\infty \leq C_3\lambda^{-3},\; \|B''(\lambda)\|_\infty \leq C_4\lambda^{-4}.
        \end{equation*}
        
        So for $j\geq 0$,
        \begin{align*}
            \|A\|_{C^2([j,j+1],\Lc_\infty)} &\leq C(j+1)^{-1/2},\\
            \|B\|_{C^2([j,j+1],\Lc_\infty)} &\leq C(j+1)^{-2}.
        \end{align*}}
        Thus,
        \begin{equation*}
            \|F\|_{C^2([j,j+1],\Lc_{p,\infty})} \leq C(j+1)^{-5/2}.
        \end{equation*}
        Then applying \cite[Corollary 3.7]{LeSZ2}, it follows that $\int_0^\infty F(\lambda)\,d\lambda \in \Lc_{p,\infty}$ if:
        \begin{equation*}
            \sum_{j\geq 0} (j+1)^{-\frac{5p}{2p+2}} < \infty.
        \end{equation*}
        This is indeed the case if $3p > 2$, which holds due to our assumption that $p \geq 1$. Thus,
        \begin{equation*}
            \int_0^\infty F(\lambda)\,d\lambda \in \Lc_{p,\infty}
        \end{equation*}
        and so $\delta_0(X) \in \Lc_{p,\infty}$, as required.
    \end{proof}
    
    \begin{proof}[Proof of Lemma \ref{lambda to delta}]
        Assume that $R_T^k(x)(1+T^2)^{-1/2} \in \Lc_{p,\infty}$ for all $k \geq 0$.
        
        We will show that:
        \begin{equation}\label{square to prove}
            \delta_0^{j}(R_T^{k}(x))(1+T^2)^{-1/2} \in \Lc_{p,\infty}
        \end{equation}
        for all $(j,k) \in \Ntrl^2$. Let $\Zc$ be the set of $(j,k)$ such that \eqref{square to prove} holds. By assumption, $(0,k) \in \Zc$ for all $k\geq 0$.
        
        By Lemma \ref{lambda to delta auxiliary lemma} with $X= \delta_0^j(R_T^k(x))(1+T^2)^{-1/2}$, we have:
        \begin{equation}\label{inductive argument on the square}
            (j,k+1),\,(j,k+2) \in \Zc\Rightarrow (j+1,k) \in \Zc.
        \end{equation}
        
        It then follows that $\Zc = \Ntrl^2$. To see this, suppose that there is $(j,k) \in \Ntrl^2\setminus \Zc$, and choose $(j,k)$ such that $j$ is minimal. Since $(0,k) \in \Zc$ for all $k\geq 0$,
        we must have $j\geq 1$. However we must have $(j-1,k+1),(j-1,k+2) \in \Zc$ since we assume that $j$ is minimal. Then by \eqref{inductive argument on the square} it follows
        that $(j,k) \in \Zc$.
    \end{proof}

\renewcommand{\thesection}{B}
    
\section{Proof of Theorem \ref{difference is trace class}}\label{operator difference appendix}
    The following result concerns conditions on positive bounded operators $A$ and $B$ so that the difference $B^rA^r-(A^{1/2}BA^{1/2})^r$ is trace class.
    The proof is based on two operator integration results: the first is the following integral representation which appeared first as a special case in \cite[Lemma 5.2]{CSZ}, and was later strengthened in \cite{SZ-asterisque}.
    
    The following appears as \cite[Theorem 5.2.1]{SZ-asterisque}: Let $A$ and $B$ be positive bounded operators on a complex separable Hilbert space $H$, let $z \in \Cplx$ with $\Re(z) > 1$ and let $Y = A^{1/2}BA^{1/2}$. We define the mapping $T_z:\Rl\to \Lc_\infty$ by:
    \begin{align*}
        T_z(0) &:= B^{z-1}[BA^{1/2},A^{z-1/2}]+[BA^{1/2},A^{1/2}]Y^{z-1},\\
        T_z(s) &:= B^{z-1+is}[BA^{1/2},A^{z-1/2+is}]Y^{-is}+B^{is}[BA^{1/2},A^{1/2+is}]Y^{z-1-is},\quad s \neq 0.
    \end{align*}
    Here, we use the convention that $0^{is} = 0$ for all $s \in \Rl$. We consider the function $g_z:\Rl\to \Cplx$ given by:
    \begin{align*}
        g_z(0) &:= 1-\frac{z}{2},\\
        g_z(t) &:= 1-\frac{e^{\frac{z}{2}t}-e^{-\frac{z}{2}t}}{(e^{\frac{t}{2}}-e^{-\frac{t}{2}})(e^{\frac{z-1}{2}t}-e^{-\frac{z-1}{2}t})},\quad t \neq 0.
    \end{align*}
    Note that $g_z$ is a Schwartz function (See \cite[Remark 5.22]{SZ-asterisque}).
    Then $T_z:\Rl\to\Lc_\infty$ is continuous in the weak operator topology, and if 
    $$\widehat{g}_z(s) := (2\pi)^{-1}\int_{-\infty}^\infty g_z(t)e^{-ist}\,dt,$$
    then:
    \begin{equation}\label{integral representation}
        B^zA^z-(A^{1/2}BA^{1/2})^z = T_z(0)-\int_{\Rl} T_z(s)\widehat{g}_z(s)\,ds.
    \end{equation}
    
    The second technical result we use is that if $r > 1$ and $X$ and $Y$ are positive operators such that $[X,Y] \in \Lc_{r,1}$, and $f$ is a Lipschitz function on $\Rl$ then
    \begin{equation*}
        \|[X,f(Y)]\|_{r,1} \leq c_r\|f'\|_{L_\infty(\Rl)}\|[X,Y]\|_{r,1}.
    \end{equation*}
    This follows from \cite[Equation (14)]{PS-acta}, since $\Lc_{r,1}$ is an interpolation space between $\Lc_p$ and $\Lc_q$ spaces for some $1 < p < q <\infty$. In particular, if $f(t) = t^{1+2is}$ then $f$ is a Lipschitz function on $\Rl$, with $|f'(t)| = |1+2is|$ for $t \in \Rl$,
    and therefore:
    \begin{equation}\label{lipschitz bound}
        \|[X,Y^{1+2is}]\|_{r,1} \leq 2c_r(1+|s|)\|[X,Y]\|_{r,1}.
    \end{equation}
    
    \begin{theorem}\label{operator difference results}
        Let $A$ and $B$ be two positive bounded operators, and let $r > 1$. If the following four conditions hold:
        \begin{enumerate}[{\rm (i)}]
            \item{}\label{op diff 1} $B^{r-1}A^{r-1} \in \Lc_{\frac{r}{r-1},\infty}$,
            \item{}\label{op diff 2} $A^{1/2}BA^{1/2} \in \Lc_{r,\infty}$,
            \item{}\label{op diff 3} $[BA^{1/2},A^{1/2}] \in \Lc_{r,1}$,
            \item{}\label{op diff 4} $B^{r-1}[B,A^{r-1}]A \in \Lc_1$.
        \end{enumerate}
        Then
        \begin{equation*}
            B^rA^r-(A^{1/2}BA^{1/2})^r \in \Lc_1.
        \end{equation*}
    \end{theorem}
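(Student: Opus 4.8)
The plan is to feed the data of the theorem into the integral representation \eqref{integral representation} with the complex parameter specialised to $z=r$. Writing $Y:=A^{1/2}BA^{1/2}$, this gives
\begin{equation*}
    B^rA^r-(A^{1/2}BA^{1/2})^r = T_r(0)-\int_{\Rl} T_r(s)\,\widehat{g}_r(s)\,ds ,
\end{equation*}
so it suffices to prove two things: that the boundary term $T_r(0)$ lies in $\Lc_1$, and that the integrand has $\Lc_1$-quasinorm $\|T_r(s)\|_1=O(1+|s|)$ as $|s|\to\infty$. Since $g_r$, hence $\widehat{g}_r$, is a Schwartz function, the latter bound makes $s\mapsto T_r(s)\widehat{g}_r(s)$ Bochner-integrable into $\Lc_1$, so the integral lies in $\Lc_1$ and the theorem follows.

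For the boundary term, the key elementary identity is $[BA^{1/2},A^{r-1}]=[B,A^{r-1}]A^{1/2}$, which holds because $A^{1/2}$ commutes with $A^{r-1}$. Combining it with the Leibniz rule applied to $A^{r-1/2}=A^{r-1}A^{1/2}$ yields
\begin{equation*}
    B^{r-1}[BA^{1/2},A^{r-1/2}] = B^{r-1}[B,A^{r-1}]A + B^{r-1}A^{r-1}[BA^{1/2},A^{1/2}] .
\end{equation*}
The first summand is in $\Lc_1$ by hypothesis \eqref{op diff 4}; the second is a product of a factor in $\Lc_{\frac{r}{r-1},\infty}$ (hypothesis \eqref{op diff 1}) and a factor in $\Lc_{r,1}$ (hypothesis \eqref{op diff 3}), hence in $\Lc_1$ by the H\"older inequality $\Lc_{\frac{r}{r-1},\infty}\cdot\Lc_{r,1}\subseteq\Lc_1$. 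The remaining term of $T_r(0)$, namely $[BA^{1/2},A^{1/2}]Y^{r-1}$, is treated the same way: $Y\in\Lc_{r,\infty}$ by hypothesis \eqref{op diff 2} gives $Y^{r-1}\in\Lc_{\frac{r}{r-1},\infty}$, and $\Lc_{r,1}\cdot\Lc_{\frac{r}{r-1},\infty}\subseteq\Lc_1$ (which follows from the H\"older inequality by taking adjoints). Thus $T_r(0)\in\Lc_1$.

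For the integrand with $s\neq 0$ the same decomposition is used, now carrying along the factors $B^{is},A^{is},Y^{-is}$, each of norm at most $1$ (with the convention $0^{is}=0$). Writing $A^{r-1/2+is}=A^{r-1}(A^{1/2})^{1+2is}$ and expanding by Leibniz, the first term of $T_r(s)$ splits into $B^{r-1+is}[B,A^{r-1}]A^{1+is}Y^{-is}$ and $B^{r-1+is}A^{r-1}[BA^{1/2},(A^{1/2})^{1+2is}]Y^{-is}$. Since $B^{is}$ commutes with $B^{r-1}$, the first of these equals $B^{is}\bigl(B^{r-1}[B,A^{r-1}]A\bigr)A^{is}Y^{-is}$, which lies in $\Lc_1$ with $\Lc_1$-norm at most $\|B^{r-1}[B,A^{r-1}]A\|_1$, uniformly in $s$. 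For the pieces involving $[BA^{1/2},(A^{1/2})^{1+2is}]$ (and likewise for $B^{is}[BA^{1/2},A^{1/2+is}]Y^{r-1-is}$ from the second term of $T_r(s)$, since $A^{1/2+is}=(A^{1/2})^{1+2is}$) I would apply the Lipschitz commutator estimate \eqref{lipschitz bound} with $X=BA^{1/2}$ and $A^{1/2}$ in place of $Y$ there — applicable precisely because $[BA^{1/2},A^{1/2}]\in\Lc_{r,1}$ by hypothesis \eqref{op diff 3} — to get $\|[BA^{1/2},(A^{1/2})^{1+2is}]\|_{r,1}\leq 2c_r(1+|s|)\|[BA^{1/2},A^{1/2}]\|_{r,1}$. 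Combining this with hypotheses \eqref{op diff 1} and \eqref{op diff 2} (again commuting $B^{is}$ past $B^{r-1}$ or $B^{r-1}A^{r-1}$, and using $Y^{r-1-is}\in\Lc_{\frac{r}{r-1},\infty}$) together with the two one-sided H\"older inclusions yields $\|T_r(s)\|_1=O(1+|s|)$. Since $\widehat{g}_r$ decays faster than any polynomial, $\int_{\Rl}\|T_r(s)\widehat{g}_r(s)\|_1\,ds<\infty$, so $\int_{\Rl} T_r(s)\widehat{g}_r(s)\,ds\in\Lc_1$; as $\Lc_1$ embeds continuously in $\Lc_\infty$ this agrees with the integral in \eqref{integral representation}, and hence $B^rA^r-(A^{1/2}BA^{1/2})^r\in\Lc_1$. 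The main obstacle I anticipate is the bookkeeping in this last step: organising the commutator identities so that each imaginary power $B^{is},A^{is},Y^{\pm is}$ ends up merely as a bounded factor, matching the resulting operators against exactly the four hypotheses and the inclusions $\Lc_{\frac{r}{r-1},\infty}\cdot\Lc_{r,1}\subseteq\Lc_1$ and $\Lc_{r,1}\cdot\Lc_{\frac{r}{r-1},\infty}\subseteq\Lc_1$, and keeping the $s$-dependence linear so it is absorbed by the decay of $\widehat{g}_r$.
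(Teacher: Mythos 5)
Your proof is correct and takes essentially the same route as the paper: apply the integral representation \eqref{integral representation} with $z=r$, split each term of $T_r(s)$ via the Leibniz rule and the identity $[BA^{1/2},A^{r-1}]=[B,A^{r-1}]A^{1/2}$, match the resulting pieces against hypotheses \eqref{op diff 1}--\eqref{op diff 4} using the two one-sided H\"older inclusions $\Lc_{\frac{r}{r-1},\infty}\cdot\Lc_{r,1}\subseteq\Lc_1$ and $\Lc_{r,1}\cdot\Lc_{\frac{r}{r-1},\infty}\subseteq\Lc_1$, absorb the imaginary powers $B^{is},A^{is},Y^{\pm is}$ as contractions, and invoke \eqref{lipschitz bound} with $X=BA^{1/2}$, $A^{1/2}$ in the second slot, to obtain $\|T_r(s)\|_1=O(1+|s|)$, which the Schwartz decay of $\widehat g_r$ then integrates.
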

    \begin{proof}
        Taking $z = r$ and observing that $\Re(z) = r > 1$ allows us to apply \eqref{integral representation} to get:
        \begin{equation*}
            B^rA^r-(A^{1/2}BA^{1/2})^r = T_r(0)-\int_{\Rl} T_r(s)\widehat{g}_r(s)\,ds.
        \end{equation*}
        We now focus on proving that $T_r(0) \in \Lc_1$ and
        \begin{equation*}
            \int_{\Rl} \|T_r(s)\|_1|\widehat{g}_r(s)|\,ds < \infty.
        \end{equation*}
        
        Let $s \in \Rl$. As the function $t\mapsto t^{is}$ takes values in $\{z\in \Cplx\;:\;|z|=0,1\}$, the operator $Y^{is} = (A^{1/2}BA^{1/2})^{is}$ is a partial isometry. So we have by the triangle inequality:
        \begin{equation*}
            \|T_r(s)\|_1 \leq \|B^{r-1}[BA^{1/2},A^{r-1/2}]\|_1+\|[BA^{1/2},A^{1/2+is}]Y^{r-1}\|_1.
        \end{equation*}
        Note that this holds even in the $s=0$ case.
        By the Leibniz rule:
        \begin{align*}
            [BA^{1/2},A^{r-\frac{1}{2}+is}] &= [BA^{1/2},A^{r-1}A^{1/2+is}]\\
                                            &= [BA^{1/2},A^{r-1}]A^{1/2+is}+A^{r-1}[BA^{1/2},A^{1/2+is}].
        \end{align*}
        Therefore,
        \begin{align*}
            \|T_r(s)\|_1 &\leq \|B^{r-1}[B,A^{r-1}]A\|_1+\|B^{r-1}A^{r-1}[BA^{1/2},A^{1/2+is}]\|_1\\
                         &\quad +\|[BA^{1/2},A^{1/2+is}]Y^{r-1}\|_1.
        \end{align*}
        Using the H\"older inequality we have:
        \begin{align*}
            \|T_r(s)\|_1 &\leq \|B^{r-1}[B,A^{r-1}]A\|_1 +\|B^{r-1}A^{r-1}\|_{\frac{r}{r-1},\infty}\|[BA^{1/2},A^{1/2+is}]\|_{r,1}\\
                         &\quad +\|[BA^{1/2},A^{1/2+is}]\|_{r,1}\|Y^{r-1}\|_{\frac{r}{r-1},\infty}.
        \end{align*}
        By assumption \eqref{op diff 4}, the first norm $\|B^{r-1}[B,A^{r-1}]A\|_1$ is finite, and by \eqref{op diff 1} the norm $\|B^{r-1}A^{r-1}\|_{\frac{r}{r-1},\infty}$ is finite.
        Finally by \eqref{op diff 2}, we have $Y^{r-1} \in \Lc_{\frac{r}{r-1},\infty}$ and so $\|Y^{r-1}\|_{\frac{r}{r-1},\infty}$ is finite.
        
        So there are constants $c_1$ and $c_2$ such that:
        \begin{equation*}
            \|T_r(s)\|_1 \leq c_1+c_2\|[BA^{1/2},A^{1/2+is}]\|_{r,1}.
        \end{equation*}
        If $s = 0$, then by assumption \eqref{op diff 3} the latter norm is finite, so we have proved that $\|T_r(0)\|_1 < \infty$.
        
        Since by \eqref{op diff 3} we have that $[BA^{1/2},A^{1/2}]\in \Lc_{r,1}$, we can apply \eqref{lipschitz bound} with $X = BA^{1/2}$ and $Y = A^{1/2}$ to get:
        \begin{equation*}
            \|T_r(s)\|_1 \leq c_1+2c_2c_r(1+|s|).
        \end{equation*}
        Since $g_r$ is Schwartz, the Fourier transform $\widehat{g}_r$ is Schwartz, and therefore,
        \begin{equation*}
            \int_{\Rl} |\widehat{g}_r(s)|\|T_r(s)\|_1\,ds \leq c_1\int_{\Rl}|\widehat{g}_r(s)|\,ds+2c_2c_r\int_{\Rl}|\widehat{g}_r(s)|(1+|s|)\,ds\\
                                                          <\infty.
        \end{equation*}
        Thus $\int_\Rl \widehat{g}_r(s)T_r(s)\,ds \in \Lc_1$, and so $B^rA^r-(A^{1/2}BA^{1/2})^r \in \Lc_1$.       
    \end{proof}
    
    The following theorem is a recent result to Eric Ricard, and is a special case of \cite[Theorem 3.2]{Ricard-fractional-powers}:
    \begin{theorem}\label{ricard theorem}
        Let $s > 0$, and $\theta \in (0,1]$. If $X$ and $Y$ are positive operators such that $X-Y \in \Lc_s$, then $X^\theta-Y^\theta \in \Lc_{s/\theta}$, with concrete quasi-norm bound:
        \begin{equation*}
            \|X^\theta-Y^\theta\|_{\Lc_{s/\theta}} \leq C_{s,\theta}\|X-Y\|_{\Lc_s}^\theta.
        \end{equation*}
    \end{theorem}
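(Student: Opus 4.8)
The plan is to start from the Balakrishnan subordination formula for fractional powers and then split the analysis according to whether the target ideal is Banach or quasi-Banach; only the quasi-Banach case requires the full strength of \cite[Theorem 3.2]{Ricard-fractional-powers}, which I would invoke there, while the Banach case I would treat directly. For $\theta\in(0,1)$ and $a\geq 0$ one has $a^\theta=\frac{\sin(\pi\theta)}{\pi}\int_0^\infty\frac{a}{\lambda+a}\lambda^{\theta-1}\,d\lambda$; applying this via the functional calculus to the positive bounded operators $X,Y$ and using the resolvent identity $(\lambda+Y)^{-1}-(\lambda+X)^{-1}=(\lambda+Y)^{-1}(X-Y)(\lambda+X)^{-1}$ gives
\begin{equation*}
    X^\theta-Y^\theta=\frac{\sin(\pi\theta)}{\pi}\int_0^\infty \lambda^\theta (\lambda+Y)^{-1}(X-Y)(\lambda+X)^{-1}\,d\lambda,
\end{equation*}
the case $\theta=1$ being trivial with $C_{s,1}=1$. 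The aim is to read off membership in $\Lc_{s/\theta}$ together with the stated quasi-norm bound from this representation.

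When $s/\theta\geq 1$ I would argue softly, not even needing the integral representation: since $t\mapsto t^\theta$ is operator monotone on $[0,\infty)$ and vanishes at the origin, the classical difference inequality for operator monotone functions (Birman--Koplienko--Solomyak, Ando) gives the submajorisation $|X^\theta-Y^\theta|\prec\prec |X-Y|^\theta$, equivalently $\|X^\theta-Y^\theta\|_{(n)}\leq\||X-Y|^\theta\|_{(n)}$ for every Ky Fan norm. Since $\||X-Y|^\theta\|_{s/\theta}=\|X-Y\|_s^\theta$, this already yields the claim with $C_{s,\theta}=1$ in this range.

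The main obstacle is the genuinely quasi-Banach regime $s<\theta\leq 1$, i.e. $s/\theta<1$ --- which is exactly the range needed in Appendix \ref{operator difference appendix} (e.g. $\theta=\tfrac12$ with $X-Y\in\Lc_{1/4}$ in the proof of Theorem \ref{difference is trace class}). Here the triangle inequality is unavailable, and worse, the integrand above is only $O(\lambda^{\theta-2})$ in $\Lc_s$-quasi-norm as $\lambda\to0^+$ (using merely $\|(\lambda+X)^{-1}\|_\infty\leq\lambda^{-1}$), which is not integrable near the origin; one therefore cannot integrate naively and must extract additional cancellation. The natural remedy is to decompose $[0,\infty)$ dyadically, estimate on each block $[2^j,2^{j+1}]$ the $C^2$-quasi-norm of $\lambda\mapsto \lambda^\theta(\lambda+Y)^{-1}(X-Y)(\lambda+X)^{-1}$ with values in $\Lc_{s/\theta}$, and recombine the blocks via an $\ell_{s/\theta}$-summation using a quasi-Banach-valued operator integration estimate of the type of \cite[Corollary 3.7]{LeSZ2} (equivalently, via a multiple/double operator integral theory adapted to quasi-Banach Schatten ideals). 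Arranging that the block contributions are $\ell_{s/\theta}$-summable --- a Mikhlin-type multiplier bound for fractional powers on quasi-Banach ideals, which is precisely what forces both the exponent shift $s\mapsto s/\theta$ and the power $\theta$ on the right-hand side --- is the crux, and is exactly the content of \cite[Theorem 3.2]{Ricard-fractional-powers}; at that point I would simply quote that result. I note in passing that the obvious shortcut of telescoping $X^\theta-Y^\theta=(X^{\theta/n})^n-(Y^{\theta/n})^n$ to reduce the power only places the difference in the \emph{larger} ideal $\Lc_{ns/\theta}$ and hence does not help.
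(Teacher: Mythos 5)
Your proposal is correct and, on the point that matters, coincides with the paper: Theorem \ref{ricard theorem} is not proved in the text but is simply quoted as a special case of \cite[Theorem 3.2]{Ricard-fractional-powers}, and your argument likewise delegates the genuinely quasi-Banach range $s<\theta$ (the only hard case, and the one needed for $p=1$ in Appendix \ref{operator difference appendix}) to that same reference. The extra material you supply is sound --- the Balakrishnan representation and resolvent identity are manipulated correctly, and in the Banach range $s\geq\theta$ the Birman--Koplienko--Solomyak/Ando submajorisation $|X^\theta-Y^\theta|\prec\prec |X-Y|^\theta$ does yield the bound with constant $1$ --- but it is an elaboration of, not a substitute for, the citation.
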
    

    \begin{proof}[Proof of Theorem \ref{difference is trace class}]
        The $p=1$ case follows immediately from Theorem \ref{ricard theorem} with $X = (1+D^*D)^{-1}$, $Y = (1+DD^*)^{-1}$ and $\theta = \frac{1}{2}$, and so we focus on the $p > 1$ cases,
    
        By the assumption of Hypothesis \ref{lotoreichik hypothesis}, for $0 \leq a \in \Ac$ we have:
        \begin{equation}\label{initial resolvent difference}
            a(1+D^*D)^{-1}a-a(1+DD^*)^{-1}a \in \Lc_{\frac{p}{3}}.
        \end{equation}
        Initially we handle the case $p=2$. Since $\Lc_{\frac{2}{3}}\subset \Lc_1$, we have:
        \begin{equation*}
            a(1+D^*D)^{-1}a-a(1+DD^*)^{-1}a \in \Lc_1.
        \end{equation*}
        We have included in Hypothesis \ref{lotoreichik hypothesis} that $a$ can be factorised as $a_1a_2$, for $a_1,a_2 \in \Ac$. Hence using the Leibniz rule and the simple observation that $[|D|^2,a]^* = -[|D|^2,a^*]$, we have:
        \begin{align*}
            [(1+D^*D)^{-1},a] &= [(1+D^*D)^{-1},a_1]a_2+a_1[(1+D^*D)^{-1},a_2]\\
                              &= -(1+|D|^2)^{-1}[|D|^2,a_1](1+|D|^2)^{-1}a_2\\
                              &\quad -a_1(1+|D|^2)^{-1}[|D|^2,a_2](1+|D|^2)^{-1}\\
                              &= ([|D|^2,a_1^*](1+|D|^2)^{-1})^*(1+|D|^2)^{-1}a_2\\
                              &\quad -a_1(1+|D|^2)^{-1}[|D|^2,a_2](1+|D|^2)^{-1}\\
                              &= (R_{|D|}(a_1^*)(1+|D|^2)^{-1/2})^*(1+|D|^2)^{-1}a_2\\
                              &\quad -a_1(1+|D|^2)^{-1}R_{|D|}(a_2)(1+|D|^2)^{-1/2}
        \end{align*}
        By the definition of being smoothly $2$-dimensional (Definition \ref{def_smoothly_p_dim}) and Lemma \ref{strong implies weak dimension} we have that for all $a \in \Ac$,
        \begin{equation*}
            a(1+|D|^2)^{-1} \in \Lc_{1,\infty},\quad R_{|D|}(a)(1+|D|^2)^{-1/2} \in \Lc_{2,\infty}.
        \end{equation*}
        Thus by H\"older's inequality:
        \begin{equation*}
            [(1+D^*D)^{-1},a] \in \Lc_{2,\infty}\cdot \Lc_{1,\infty}+\Lc_{1,\infty}\cdot \Lc_{2,\infty} \subseteq \Lc_{\frac{2}{3},\infty} \subset \Lc_1.
        \end{equation*}
        By an identical argument, we also have:
        \begin{equation*}
            [(1+DD^*)^{-1},a] \in \Lc_1.
        \end{equation*}
        Thus by \eqref{initial resolvent difference}:
        \begin{align*}
            (1+D^*D)^{-1}a^2-(1+DD^*)^{-1}a^2 &= [(1+D^*D)^{-1},a]a-[(1+DD^*)^{-1},a]a\\
                                              &\quad+a(1+D^*D)^{-1}a-a(1+DD^*)^{-1}a\\
                                              &\in \Lc_1
        \end{align*}        
        and this completes the proof for $p=2$.
        
    
        Now assume that $p > 2$. Applying Theorem \ref{ricard theorem} with $\theta = \frac{1}{2}$ to \eqref{initial resolvent difference} yields:
        \begin{equation}\label{difference of square roots}
            (a(1+D^*D)^{-1}a)^{\frac{1}{2}}-(a(1+DD^*)^{-1}a)^{\frac{1}{2}} \in \Lc_{2p/3} \subset \Lc_{2p/3,\infty}.
        \end{equation}
        By the assumption of Hypothesis \ref{lotoreichik hypothesis}, $p$ is an integer. Then we have that:
        \begin{align*}
            (a(1+D^*D)^{-1}a)^{\frac{p}{2}}-&(a(1+DD^*)^{-1}a)^{\frac{p}{2}} \\
              &= \sum_{k=0}^{p-1} (a(1+D^*D)^{-1}a)^{\frac{k}{2}}((a(1+D^*D)^{-1}a)^{\frac{1}{2}}\\
              &\quad -(a(1+DD^*)^{-1}a)^{\frac{1}{2}})(a(1+DD^*)^{-1}a)^{\frac{p-1-k}{2}}.
        \end{align*}
        Examining the $k$th summand, by Lemma \ref{strong implies weak dimension}, we have that $(a(1+DD^*)^{-1}a)^{k/2}$ is in $\Lc_{\frac{p}{k},\infty}$
        and $(a(1+D^*D)^{-1}a)^{\frac{p-1-k}{2}} \in \Lc_{\frac{p}{p-1-k},\infty}$. Thus by \eqref{difference of square roots} and the H\"older inequality,
        \begin{align*}
            (a(1+D^*D)^{-1}a)^{\frac{k}{2}}((a(1+D^*D)^{-1}&a)^{\frac{1}{2}}-(a(1+DD^*)^{-1}a)^{\frac{1}{2}})(a(1+DD^*)^{-1}a)^{\frac{p-1-k}{2}}\\
                                                           &\in \Lc_{\frac{p}{k},\infty}\cdot\Lc_{\frac{2p}{3},\infty}\cdot\Lc_{\frac{p}{p-k-1},\infty}\\
                                                           &= \Lc_{\frac{2p}{2p+1},\infty}\\
                                                           &\subset \Lc_1.
        \end{align*}
        Therefore,
        \begin{equation*}
            (a(1+D^*D)^{-1}a)^{\frac{p}{2}}-(a(1+DD^*)^{-1}a)^{\frac{p}{2}} \in \Lc_1.
        \end{equation*}
        
        We will now show that:       
        \begin{equation*}
            (1+D^*D)^{-p/2}a^p-(a(1+D^*D)^{-1}a)^{\frac{p}{2}} \in \Lc_1
        \end{equation*}
        and
        \begin{equation*}
            (1+DD^*)^{-p/2}a^p-(a(1+DD^*)^{-1}a)^{\frac{p}{2}} \in \Lc_1
        \end{equation*}
        by verifying the conditions of Theorem \ref{operator difference results} with $r = \frac{p}{2}>1$, $A = a^2$, and $B$ is either $(1+D^*D)^{-1}$
        or $(1+DD^*)^{-1}$. Take $B = (1+DD^*)^{-1}$. The case where $B = (1+D^*D)^{-1}$ will follow from an identical argument.

        First, to see Condition \eqref{op diff 1} in Theorem \ref{operator difference results} with $r = \frac{p}{2}$, we have:
        \begin{equation*}
            B^{\frac{p}{2}-1}A^{\frac{p}{2}-1} = (1+DD^*)^{-\frac{p-2}{2}}a^{p-2} 
        \end{equation*}
        Since $p > 2$, we have that $\frac{p}{p-2} > 1$ and so by the Araki-Lieb-Thirring inequality \eqref{ALT inequality},
        \begin{equation*}
            |(1+DD^*)^{-\frac{p-2}{2}}a^{p-2}|^{\frac{p}{p-2}} \prec\prec_{\log} (1+DD^*)^{-\frac{p}{2}}a^p.
        \end{equation*}
        However $(1+DD^*)^{-p/2}a^p$ is in $\Lc_{1,\infty}$, and therefore $(1+DD^*)^{-\frac{p-2}{2}}a^{p-2} \in \Lc_{\frac{p}{p-2},\infty}$,
        and this is what was required for Condition \eqref{op diff 1}.
                
        For Condition \eqref{op diff 2} in Theorem \ref{operator difference results}, we have: 
        \begin{equation*}
            A^{1/2}BA^{1/2} = a(1+DD^*)^{-1}a = |(1+DD^*)^{-1/2}a|^2. 
        \end{equation*} 
        Directly applying Lemma \ref{strong implies weak dimension} yields $(1+DD^*)^{-1/2}a \in \Lc_{p,\infty}$, and therefore $|(1+DD^*)^{-1/2}a|^2\in \Lc_{p/2,\infty}$,
        and this verifies Condition \eqref{op diff 2}.

        For Condition \eqref{op diff 3} in Theorem \ref{operator difference results}, we have:   
        \begin{align*}
            [BA^{1/2},A^{1/2}] &= [(1+DD^*)^{-1}a,a]\\
                               &= -(1+DD^*)^{-1}[DD^*,a](1+DD^*)^{-1}a\\
                               &= -(1+DD^*)^{-1}R_{|D^*|}(a)(1+DD^*)^{-1/2}a.
        \end{align*}
        By Lemma \ref{strong implies weak dimension}, we have that $(1+DD^*)^{-1/2}a \in \Lc_{p,\infty}$.
        By the Araki-Lieb-Thirring inequality \eqref{ALT inequality},
        \begin{equation*}
            |(1+DD^*)^{-1}R_{|D^*|}(a)|^{p/2} \prec\prec_{\log} (1+DD^*)^{-p/2}|R_{|D^*|}(a)|^{p/2}
        \end{equation*}
        We see that the right hand side is:
        \begin{equation*}
            (1+DD^*)^{-p/2}|R_{|D^*|}(a)|^{p/2} = (|R_{|D^*|}(a)|^{p/2}(1+DD^*)^{-p/2})^*
        \end{equation*}
        since $p/2 > 1$, we have that $|R_{|D^*|}(a)|^{p/2}(1+DD^*)^{-p/2} \in \Lc_{1,\infty}$ by the definition of being smoothly $p$-dimensional (Definition \ref{def_smoothly_p_dim}).
        Since the right hand side is in $\Lc_{1,\infty}$, it follows that $(1+DD^*)^{-1}R_{|D^*|}(a) \in \Lc_{p/2,\infty}$. 
        
        Thus, by the H\"older inequality,
        \begin{equation*}
            [BA^{1/2},A^{1/2}] \in \Lc_{p,\infty}\cdot\Lc_{p/2,\infty} \subseteq \Lc_{\frac{p}{3},\infty} \subset \Lc_{\frac{p}{2},1}
        \end{equation*}
        and so Condition \eqref{op diff 3} is verified.        
        
        Finally, for Condition \eqref{op diff 4} in Theorem \ref{operator difference results}, we have:
        \begin{align*}
            B^{\frac{p}{2}-1}[B,A^{\frac{p}{2}-1}]A &= (1+DD^*)^{-\frac{p-2}{2}}[(1+DD^*)^{-1},a^{p-2}]a^2\\
                                                    &= -(1+DD^*)^{-\frac{p}{2}}[DD^*,a^{p-2}](1+DD^*)^{-1}a^2\\
                                                    &= -(1+DD^*)^{-\frac{p}{2}}R_{|D^*|}(a^{p-2})(1+DD^*)^{-1/2}a^2.
        \end{align*}
        By the assumption of being smoothly $p$-dimensional, the first factor is in $\Lc_{1,\infty}$, and by Lemma \ref{strong implies weak dimension}, $(1+DD^*)^{-1/2}a^2 \in \Lc_{p,\infty}$.
        By the H\"older inequality, the product is in $\Lc_{\frac{p}{p+1},\infty}$, and so is trace class. This verifies Condition \eqref{op diff 4}.
        
        This verifies the four conditions of Theorem \ref{operator difference results}; and therefore:
        \begin{equation*}
            (1+D^*D)^{-p/2}a^p-(1+DD^*)^{-p/2}a^p \in \Lc_1
        \end{equation*}
        when $p > 2$, thus completing the proof.
    \end{proof}

\section*{Acknowledgements}
    The authors would like to extend our appreciation to Dr Vladimir Lotoreichik for many helpful discussions concerning the theory of self-adjoint extensions of the Laplace operator,
    and to Prof Matthias Lesch for his suggestion to use pseudodifferential operator theory to substantially simplify the present text.
    
    Edward McDonald was partly funded by an RTP scholarship. Fedor Sukochev greatly acknowledges the support of ARC grant FL170100052. Dmitriy Zanin was partly funded by a UNSW Scientia Fellowship.
  
\section*{Bibliography}


\end{document}